\newif\ifsiamjnl
\setlist[enumerate]{leftmargin=.5in}
\setlist[itemize]{leftmargin=.5in}
\newcommand{\T}{{\bf T}}
\newcommand{\R}{{\bf R}}
\newcommand{\Z}{{\bf Z}}
\newcommand{\set}[1]{\left\{#1\right\}}
\newtheorem{theorem}{Theorem}[section]
\newtheorem{corollary}{Corollary}[section]
\newtheorem{lemma}{Lemma}[section]
\newtheorem{proposition}{Proposition}[section]
\theoremstyle{definition}
\newtheorem{definition}{Definition}[section]
\theoremstyle{remark}
\newtheorem{remark}{Remark}[section]
\newcommand{\safeinput}[1]{\IfFileExists{#1}{{\catcode`\&=0\input{#1}}}{\message{File
        #1 does not exists. Skipping.}}}
\def\ltxfigure#1#2#3{\resizebox{#2}{#3}{\safeinput{#1}}}
\newcommand{\sphere}[1]{{\mathbf{S}^{#1}}}
\newcommand{\D}[1]{\mathrm{d}#1}
\newcommand{\mean}[1]{\overline{#1}}
\newcommand{\textem}[1]{{\em #1}}
\newcommand{\defn}[1]{\textem{#1}}
\newcommand{\temp}[1]{2{\mathrm K}}
\newcommand{\ip}[2]{\langle\hspace{-0.5ex}\langle #1, #2 \rangle\hspace{-0.5ex}\rangle}
\newcommand{\kp}[2]{\langle #1, #2 \rangle}
\newcommand{\ddt}[2][\ ]{\frac{\D{#1}}{\D{#2}}}
\newcommand{\didi}[2][\ ]{\frac{∂#1}{∂#2}}
\newcommand{\didin}[3]{\frac{∂^{#1}{#2}}{∂{#3}^{#1}}}
\newcommand{\cotangent}{T^*}
\newcommand{\metaref}[3]{{\ifnum#1=0(\fi}{#3}\ref{#2}{\ifnum#1=0)\fi}}
\renewcommand{\eqref}[2][0]{\metaref{#1}{#2}{eq.~}}
\newcommand{\defref}[2][0]{\metaref{#1}{#2}{def.~}}
\newcommand{\pb}[2]{\left\{ #1,#2 \right\}}
\newcommand{\DD}[2]{\mathrm{D}^{(#1)}{#2}}
\newcommand{\tempH}[2]{\DD{#1}{#2}}
\newcommand{\nthderivative}[2]{\mathrm{d}^{#1}{#2}}
\newcommand{\thermostateqm}[1]{\mathcal{#1}}
\newcommand{\closure}[1]{\overline{#1}}
\newcommand{\tI}[0]{\tilde{I}}
\newcommand{\hI}[0]{\hat{I}}
\newcommand{\e}[0]{{\mathrm{e}}}
\newcommand{\rs}[0]{r_o}
\newcommand{\SO}[1]{\mathrm{SO(}#1\mathrm{)}}
\long\def\aureply#1{\ifhmode\newline\fi\noindent{\bf Author Reply}:\ {\em #1}}
\def\gettimestamp#1<#2>{\def\timestamp{\tt #2}}
\begin{document}

\title[KAM tori]{Invariant tori for multi-dimensional integrable hamiltonians coupled to a single thermostat}
\author{Leo T. Butler}
\address{Department of Mathematics, University of Manitoba, Winnipeg,
  MB, Canada, R2J 2N2}
\email{leo.butler@umanitoba.ca}
\date{\timestamp}
\subjclass[2020]{70H08; 37J40, 82B05, 70F40}
\keywords{thermostats; Nos{é}-Hoover thermostat; hamiltonian
  mechanics; KAM theory; degenerate KAM theory}

\begin{abstract}
  This paper demonstrates sufficient conditions for the existence of KAM
  tori in a singly thermostated, integrable hamiltonian system with $n$
  degrees of freedom with a focus on the generalized, variable-mass
  thermostats of order $2$--which include the Nos\'e thermostat, the
  logistic thermostat of Tapias, Bravetti and Sanders, and the Winkler
  thermostat. It extends Theorem 3.2 of Legoll, Luskin \& Moeckel, ({\em
    Non-ergodicity of {N}os\'e-{H}oover dynamics}, Nonlinearity, 22
  (2009), pp.~1673--1694) to prove that a `typical'' singly
  thermostated, integrable, real-analytic hamiltonian possesses a
  positive-measure set of invariant tori when the thermostat is weakly
  coupled. It also demonstrates a class of integrable hamiltonians,
  which, for a full-measure set of couplings, satisfies the same
  conclusion.
\end{abstract}

\maketitle

\section{Introduction} \label{sec:intro} A central model in
statistical mechanics is an isolated mechanical system, modeled by a
Hamiltonian $H$, that is in equilibrium with a heat bath at the
temperature $T$. Khinchin stressed ergodic theory as the foundations
of statistical mechanics--approximately twenty years after Fermi's
ill-fated effort to prove the quasi-ergodic hypothesis for mechanical
systems~\cite{MR0029808,springer_jour10.1007/BF02959600}. By the early
nineteen-sixties, the Kolmogorov-Arnol'd-Moser theory and the
Fermi-Pasta-Ulam numerical results demonstrated that there are
fundamental difficulties in the project to reduce statistical
mechanics to classical
mechanics~\cite{MR0068687,MR0097598,MR0163025,MR0170705,MR0147741};~\cite{MR2402700FPU,MR2402700}.

This note studies thermostated systems used in the computational
statistical-mechanics and molecular-dynamics literature. Many of the
results in the mathematical literature on the properties of these
systems are negative, in a sense: they demonstrate the existence of
positive-measure sets of KAM tori and hence the failure of ergodicity.

\subsection{Thermostated mechanics}
\label{sec:therm-mech}

Nos{é} \cite{nose,doi:10.1080/00268978400101201}, after Andersen
\cite{andersen}, introduced an ``extended system'' to create a system
with an invariant measure that projects onto the Gibbs-Boltzmann
measure in physical phase space. This consists of adding an extra
degree of freedom $s$, re-scaling momentum by $s$, and coupling the
extra state $s$ thus:
\begin{align}
  \label{eq:nose}
  𝐇 &= H(q,p s^{-1}) + N_T(s,p_s), & \text{where }N_T(s,p_s) =\dfrac{1}{2 Q} p_s^2 + g k T \ln s,
\end{align}
where $g$ is a parameter, $Q$ is the ``mass'' of the thermostat (a
proxy for the thermal coupling) and $k$ is Boltzmann's constant. In
this note, it is assumed that units are chosen so that
$gk=1$.\footnote{Nos{é} notes that $g=n+1$ or $g=n$ can be used: the
  former choice ensures that the micro-canonical ensemble of $𝐇$
  projects to that of $H$; the latter choice is appropriate when one
  views the momentum $p'=p/s$ to be ``real'' and one wishes to obtain
  ergodic averages in ``real''
  variables~(see~\cite[eq. 2.10]{doi:10.1080/00268978400101201},
  \cite[eq. 2.5, p. 513]{nose}).} Solutions to Hamilton's equations
for $𝐇$ model the evolution of the state of the infinitesimal system
along with the exchange of energy with the heat
bath~\cite[p. 187]{doi:10.1080/00268978600100141}.

Hoover's reduction eliminates the state variable $s$ and re-scales time
$t$~\cite{hoover}:
\begin{equation*}
  q = q,\qquad ρ = ps^{-1},\qquad \ddt{τ} = s \ddt{t},\qquad z = \ddt[s]{t}.
\end{equation*}
The Nos{é}-Hoover thermostat for an $n$-degree of freedom Hamiltonian
$H$ can be put in the form (c.f.~\cite[eq. 6]{hoover}):
\begin{align}
  \label{eq:nose-hoover}
  \dot{q} &= H_{ρ}, && \dot{ρ} = -H_q - ε \xi ρ, &&
  \dot{\xi} = ε \left( ρ · H_{ρ} - g k T \right),
\end{align}
where $ε ² = 1/Q$ and $z=ε ξ$.\footnote{Hoover puts $g=n$ to ensure
  that the extended system has an invariant density that projects to
  the Gibbs-Boltzmann density for $H$.}

Hoover observed that this thermostat is ineffective in producing the
statistics of the Gibbs-Boltzmann distribution from single orbits of
the thermostated harmonic oscillator~\cite{hoover}. There are numerous
extensions of the Nos{é}-Hoover thermostat that model the exchange of
energy with the heat bath using a single, additional thermostat
variable ($ξ$ in \ref{eq:nose-hoover}), the so-called \textem{single
  thermostats}. A sample
includes~\cite{proquest1859245445,PhysRevE.75.040102,doi:10.1142/S0218127416501704,10.12921/cmst.2016.0000061,Wang2015,MR3432739,MR3674309}. Winkler,
in~\cite{proquest1859245445}, introduces a $p/s^2$ coupling which
produces better results than Nos{é}'s, but precludes the Hooverian
reduction~\eqref{eq:nose-hoover} since it is equivalent to a
variable-mass variant the Nos{é} thermostat--see \cite[Lemma
3.1]{1909.02995}. In~\cite{PhysRevE.75.040102}, Watanabe \& Kobayashi
thermostat a harmonic oscillator with a thermostat that controls a
single moment of momenta and show its first-order averaged system is
integrable. Rech, in~\cite{doi:10.1142/S0218127416501704}, considers
the same variant of the Nos{é}-Hoover thermostated harmonic
oscillator--introduced in~\cite{SprottJulienClinton2014Hcat} by
Sprott, Hoover \& Hoover--in which the temperature varies with
position. Tapias, Bravetti \& Sanders,
in~\cite{10.12921/cmst.2016.0000061}, replace the linear friction of
the Nos{é}-Hoover thermostat with a $\tanh$-friction that saturates at
large magnitudes of the thermostat state $ξ$. Wang \& Yang,
in~\cite{Wang2015,MR3432739}, investigate the Nos{é}-Hoover
thermostated harmonic oscillator and find regions of phase space where
apparently chaotic dynamics exist and regions where invariant tori
appear. In~\cite{MR3674309}, the same authors visit the variant of the
Nos{é}-Hoover thermostated harmonic oscillator that thermostats total
energy, and demonstrate (numerically) the existence of a
horseshoe. The present author shows, in a $2$-degree of freedom
hamiltonian that is integrable and enjoys a saddle-centre critical
point, that the Nos{é}-Hoover thermostat splits the homoclinic
connections and creates horseshoes for a suitable parameter
regime~\cite{1909.02995}.

The Nos{é} and Nos{é}-Hoover thermostats are used to thermostat mixed
quantum-classical systems, too. Grilli \& Tosatti, in
\cite{PhysRevLett.62.2889} (see also~\cite{PhysRevLett.107.179902})
use a variant of Nos{é}'s thermostat to couple a combined quantum
and classical system with a heat
bath. In~\cite[eqs. 1--3]{MauriF1993CSAo}, Mauri, Car \& Tosatti
couple a mixed quantum-classical system to a Nos{é}-Hoover thermostat;
this work is extended by Alonso,
et.~al.~\cite{Alonso_10.1088/1751-8113/44/39/395004}. Sergi and Sergi
\& Petruccione, in~\cite{MR2321662,MR2426026}, examine an alternative
mixed quantum-classical system, based on Wigner's formalization,
coupled to a Nos{é}-Hoover thermostat (and chains). Thermostats are
applied to quantum systems by Mentrup \&
Schnack~\cite{MENTRUP2001337,MENTRUP2003370}.

\subsection{KAM Tori}
\label{sec:kam-tori}

Most notable for the purposes of the present note, there are several
studies of the Nos{é}-Hoover thermostat from the point-of-view of
near-integrable
systems~\cite{MR2299758,MR2519685,Mahdi20111348}. Legoll, Luskin \&
Moeckel, in~\cite{MR2299758}, show that the Nos{é}-Hoover thermostated
harmonic oscillator enjoys KAM tori near the $ε=0$ decoupled
limit;~they extend this result in~\cite{MR2519685} to show that an
integrable system that is coupled to a Nos{é}-Hoover thermostat has a
first-order averaged system that is also integrable near
$ε=0$~\cite[Theorem 3.2]{MR2519685}:

\begin{theorem}[Legoll, Luskin \& Moeckel]
  \label{thm:llm-ave}
  The averaged equations for the Nos{é}-Hoover
  dynamics~\eqref{eq:nose-hoover} for a completely integrable
  Hamiltonian system with $n$ degrees of freedom have $n$ independent
  first integrals.
\end{theorem}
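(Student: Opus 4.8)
The plan is to pass to action--angle coordinates for the integrable part and then average over the angles. On an open set where $H$ admits action--angle variables $(I,\phi)\in\R^n\times\T^n$ one has $H=H(I)$, frequencies $\omega(I)=H_I(I)$, and the defining relations $X_{I_j}=\partial_{\phi_j}$ for the Hamiltonian vector fields of the actions. Writing $q=q(I,\phi)$, $\rho=\rho(I,\phi)$ for the ($2\pi$-periodic in $\phi$) inverse change of variables, I would first rewrite the \nh\ equations~\eqref{eq:nose-hoover} on $\R^n\times\T^n\times\R$ as $\dot\phi=\omega(I)+O(\epsilon)$ together with the exact slow equations
\begin{equation*}
  \dot I_j=\pb{I_j}{H}-\epsilon\,\xi\,\Bigl(\rho\cdot\frac{\partial I_j}{\partial\rho}\Bigr)=-\epsilon\,\xi\,\Bigl(\rho\cdot\frac{\partial I_j}{\partial\rho}\Bigr),\qquad \dot\xi=\epsilon\bigl(\rho\cdot H_\rho-gkT\bigr),
\end{equation*}
so that $(I,\xi)$ are slow, $\phi$ is fast, and the averaged equations are by definition obtained by replacing the right-hand sides by their means $\langle\,\cdot\,\rangle_\phi$ over $\T^n$ against Haar measure; only this formal averaged system is at issue, so resonances among the $\omega_k$ play no role here.

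The heart of the argument --- and the step I expect to be the main obstacle --- is the averaging identity
\begin{equation*}
  \Bigl\langle\rho\cdot\frac{\partial I_j}{\partial\rho}\Bigr\rangle_\phi=I_j ,
\end{equation*}
together with its companion $\langle\rho\cdot H_\rho\rangle_\phi=\sum_k I_k\,\omega_k(I)$. To prove these, let $\lambda=\sum_a\rho_a\,\mathrm{d}q_a$ be the canonical one-form, $\Omega=\sum_a\mathrm{d}q_a\wedge\mathrm{d}\rho_a$, and $V=\sum_a\rho_a\,\partial_{\rho_a}$ the Liouville (Euler) vector field, for which $\iota_V\Omega=-\lambda$. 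Then $\rho\cdot\partial_\rho I_j=\mathrm{d}I_j(V)=\Omega(X_{I_j},V)=\lambda(X_{I_j})=\lambda(\partial_{\phi_j})=\rho\cdot q_{\phi_j}$. On the Lagrangian torus $\{I=\text{const}\}$ the closed one-forms $\lambda$ and $\sum_k I_k\,\mathrm{d}\phi_k$ have equal periods (namely $2\pi I_k$ over the $k$-th basic cycle, by the definition of the action variables), hence differ by $\mathrm{d}S$ for a single-valued $S\colon\T^n\to\R$; taking the $\partial_{\phi_j}$-component and averaging annihilates $\partial_{\phi_j}S$ and leaves $\langle\rho\cdot q_{\phi_j}\rangle_\phi=I_j$. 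The companion identity is the same computation applied to $X_H=\sum_k\omega_k\,\partial_{\phi_k}$ via $\rho\cdot H_\rho=\lambda(X_H)$.

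With these identities the averaged system in the $n+1$ variables $(\bar I,\bar\xi)$ reads $\dot{\bar I}_j=-\epsilon\,\bar\xi\,\bar I_j$ for $1\le j\le n$ and $\dot{\bar\xi}=\epsilon\bigl(\sum_k\omega_k(\bar I)\,\bar I_k-gkT\bigr)$, and it remains to exhibit $n$ independent first integrals. From the first $n$ equations $\frac{\mathrm{d}}{\mathrm{d}t}\log\bar I_j=-\epsilon\bar\xi$ is the same for all $j$, so the $n-1$ ratios $\bar I_1/\bar I_n,\dots,\bar I_{n-1}/\bar I_n$ are first integrals; for the $n$-th, using $\sum_k\omega_k\bar I_k=\sum_k\bar I_k\,\partial_{I_k}H(\bar I)$ and $\sum_k\bar I_k\,\partial_{I_k}\bigl(\tfrac1n\sum_i\log\bar I_i\bigr)=1$, a one-line computation shows $\mathcal N:=\tfrac12\bar\xi^2+H(\bar I)-\tfrac{gkT}{n}\sum_k\log\bar I_k$ is conserved. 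These $n$ functions are independent on the open dense set $\{\bar I_k\neq0,\ \bar\xi\neq0\}$, since the differentials of the ratios span an $(n-1)$-dimensional subspace involving only $\mathrm{d}\bar I$, whereas $\mathrm{d}\mathcal N$ carries a nonzero $\bar\xi\,\mathrm{d}\bar\xi$ term. Everything after the averaging identity is routine bookkeeping.
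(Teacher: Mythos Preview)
Your argument is correct. Note, however, that the paper does not supply its own proof of this statement: Theorem~\ref{thm:llm-ave} is quoted from Legoll, Luskin \& Moeckel~\cite{MR2519685} as background, so there is no in-paper proof to compare against directly.

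That said, the heart of your argument --- the Liouville-form computation showing $\langle \rho\cdot\partial_\rho I_j\rangle_\phi = I_j$ and $\langle \rho\cdot H_\rho\rangle_\phi = \sum_k I_k\,\omega_k(I)$ --- is exactly the device the paper \emph{does} deploy elsewhere: Proposition~\ref{prop:mean-temph} establishes the second identity (in the guise of the orbit mean temperature $\kappa = \langle I,\mathrm{d}G(I)\rangle$) by the same mechanism, writing $\sum p_i\,\mathrm{d}q_i = \sum I_i\,\mathrm{d}\theta_i + \mathrm{d}S$ on the Liouville torus and using that $S$ is bounded/periodic to kill its contribution under averaging. Remark~\ref{rem:invariance-of-orbit-mean-temp} then abstracts this to the statement that the orbit mean temperature is invariant under exact symplectic changes of coordinates. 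So your key lemma and the paper's Proposition~\ref{prop:mean-temph} are the same computation, differing only in that you average spatially over $\T^n$ while the paper takes a Birkhoff time average (these agree by Remark~\ref{rem:orbit-mean-space-mean}). Your derivation of the averaged system $\dot{\bar I}_j=-\epsilon\bar\xi\,\bar I_j$, the $n-1$ ratio integrals, and the energy-like integral $\mathcal N$ is clean and matches what one finds in~\cite{MR2519685}.
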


The authors speculate that, even if the full system is ergodic,
finite-time orbit averages should converge very slowly to the spatial
average. They show numerically, using a rotationally-invariant, planar
mechanical hamiltonian with potential energy $V(r)=r^2+r^4$, that the
first integrals of the averaged thermostated system appear to display
no convergence to a spatial average.

\subsubsection{Rotationally-invariant potentials}
\label{sssec:llm-rot-inv}
The first result of the current note is the following:

\begin{theorem}
  \label{thm:llm-rot-inv}
  Let $M$ be a surface of revolution with local coordinates $(r,θ)$ on
  the rotationally-invariant open set $U ⊂ M$,
  $(r,θ) → \R^+ × \R/2 π \Z$. Let $V : M → \R$ be a real-analytic
  function which is rotationally invariant, so $V(q) = v(r)$. Assume
  that
  \begin{enumerate}
  \item[H1.] $v$ is strictly increasing on an interval $J$;
  \item[H2.] $r v''(r) + v'(r) > 0$ for all $r ∈ J$;
  \item[H3.] $T = \rs v'(\rs)$ for some $\rs ∈ J$;
  \item[H4.] The mechanical hamiltonian $H : \cotangent M → \R$ is
    defined by
    $$ H(r,p_r,θ,p_{θ}) = ½ \left( (c(r) p_r)^2 + (p_{θ}/r)^2 \right) + v(r), $$
    where the kinetic energy is induced by the second-fundamental form
    of $M$ (i.e. the natural metric induced by the inclusion
    $M ⊂ \R^3$).
  \end{enumerate}
  Then, the Nos{é}-thermostated hamiltonian $𝐇$~\eqref{eq:nose} for
  each such $T$ enjoys a full-measure set of masses $𝔔 ⊂ \R^+$ such
  that if $Q ∈ 𝔔$, then $𝐇$ enjoys a set of positive measure of
  invariant KAM tori.
\end{theorem}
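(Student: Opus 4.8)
The plan is to use the rotational symmetry to reduce $\mathbf{H}$ to a two-degree-of-freedom system that is a Nos\'e thermostat of an \emph{effective} one-degree-of-freedom mechanical system, to exhibit the circular orbits at radius $\rs$ as a one-parameter family of elliptic relative equilibria, and then to run KAM near a nondegenerate elliptic equilibrium, treating the thermostat mass $Q$ and the angular momentum as parameters so as to discard the finitely many arithmetic obstructions and the one nondegeneracy obstruction on a set of measure zero. First, reduce: since $v(q)=v(r)$ and the thermostat couples only through $ps^{-1}$ and $s$, the Hamiltonian $\mathbf{H}$ of \eqref{eq:nose} is invariant under the lift to $\cotangent M\times\cotangent\R^{+}$ of the rotation action on $M$, so $p_{\theta}$ is a first integral; fixing $p_{\theta}=\mu\neq0$ and reducing by the (free, on $U$) circle action gives, on the four-dimensional reduced space with coordinates $(r,p_{r},s,p_{s})$,
\[ \mathbf{H}_{\mu}=\frac{1}{s^{2}}K_{\mu}(r,p_{r})+v(r)+\frac{1}{2Q}p_{s}^{2}+T\ln s,\qquad K_{\mu}(r,p_{r})=\tfrac12\bigl((c(r)p_{r})^{2}+\mu^{2}/r^{2}\bigr), \]
which is precisely the Nos\'e extended Hamiltonian \eqref{eq:nose} of the one-degree-of-freedom mechanical system with kinetic energy $K_{\mu}$ and potential $v$. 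Combining the two nontrivial critical-point equations of $\mathbf{H}_{\mu}$ shows that an equilibrium must have $p_{r}=p_{s}=0$ and $rv'(r)=T$; by H3 such an $r$ lies in $J$, and by H1--H2 it is unique there (because $(rv')'=v'+rv''>0$), so the equilibrium is $(\rs,0,s_{*}(\mu),0)$ with $s_{*}(\mu)^{2}=\mu^{2}/(T\rs^{2})$. Equivalently, the circular orbit $\{\,r=\rs,\ p_{r}=0,\ p_{\theta}=\mu,\ s=s_{*}(\mu),\ p_{s}=0\,\}$ is a relative equilibrium of $\mathbf{H}$, and $\mu_{o}=\rs\sqrt{T}$ is the value for which $s_{*}=1$, which is the physical content of H3.

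Second, these equilibria are nondegenerate minima. In the coordinates $(r,s,p_{r},p_{s})$ the Hessian of $\mathbf{H}_{\mu}$ at $(\rs,0,s_{*},0)$ is block diagonal, the momentum block $\operatorname{diag}\bigl(c(\rs)^{2}/s_{*}^{2},\,1/Q\bigr)$ being positive definite and the configuration block being
\[ A_{\mu}=\begin{pmatrix} v''(\rs)+3v'(\rs)/\rs & 2v'(\rs)/s_{*}\\ 2v'(\rs)/s_{*} & 2\rs v'(\rs)/s_{*}^{2}\end{pmatrix}. \]
One computes $\det A_{\mu}=\frac{2v'(\rs)}{s_{*}^{2}}(\rs v''(\rs)+v'(\rs))>0$ by H1--H2, while the $(1,1)$ entry equals $(\rs v''(\rs)+v'(\rs))/\rs+2v'(\rs)/\rs>0$ as well; hence $A_{\mu}$ is positive definite and $(\rs,0,s_{*},0)$ is a nondegenerate minimum of $\mathbf{H}_{\mu}$ for every $\mu\neq0$. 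It is therefore an elliptic, Lyapunov-stable equilibrium with $3$-sphere energy levels nearby, and its linearization has frequencies $\omega_{1}(\mu,Q),\omega_{2}(\mu,Q)>0$, the roots of a quadratic in $\omega^{2}$ with coefficients real-analytic in $(\mu,Q)$.

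Third, I would run KAM near these equilibria. Putting the quadratic part in Birkhoff normal form and computing the fourth-order Birkhoff invariant produces a twist nondegeneracy scalar $\tau(\mu,Q)$, real-analytic in $(\mu,Q)$; the classical KAM theorem near a nondegenerate elliptic equilibrium (applied to the thermostated harmonic oscillator in \cite{MR2299758}) then yields, for every $(\mu,Q)$ outside the set $\mathcal{B}$ on which $\tau$ vanishes or $k_{1}\omega_{1}+k_{2}\omega_{2}=0$ for some integers $0<|k_{1}|+|k_{2}|\le4$, a positive-measure set of invariant $2$-tori of $\mathbf{H}_{\mu}$ accumulating on the equilibrium and filling a set of full density there. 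The resonance loci are proper: because $\dot s=p_{s}/Q$, the $s$-mode frequency behaves like $\sqrt{2T/Q}\to0$ as $Q\to\infty$ while the $r$-mode frequency stays bounded below, so $(\mu,Q)\mapsto\omega_{1}/\omega_{2}$ is nonconstant and real-analytic and each of the finitely many resonances cuts out a proper analytic subset. The essential point is that $\tau\not\equiv0$; I would obtain this from the $Q\to\infty$ asymptotics, where the slow $s$-mode decouples and $\tau(\mu,Q)$ tends to a nonzero multiple of the anharmonicity $\mathrm{d}^{2}e/\mathrm{d}I^{2}$ at the minimum of the effective oscillator with potential $v_{\mu}(r)=v(r)+\tfrac12\mu^{2}/r^{2}$, which H1--H4 do not force to vanish (it would vanish only for an isochronous $v_{\mu}$, a possibility one excludes under H1--H2 or disposes of by a direct computation of $\tau$). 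Hence $\mathcal{B}$ is contained in the zero set of a nontrivial real-analytic function of $(\mu,Q)$, so $|\mathcal{B}|=0$.

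Finally, a Fubini argument closes the proof. Since $|\mathcal{B}|=0$, for almost every $Q>0$ the slice $\mathcal{B}_{Q}=\{\mu:(\mu,Q)\in\mathcal{B}\}$ is Lebesgue-null, and then for almost every $\mu$ in an interval $I$ with $0\notin I$ the reduced system $\mathbf{H}_{\mu}$ carries a positive-measure set of invariant $2$-tori near $(\rs,0,s_{*}(\mu),0)$. Unreducing (near the periodic orbit the circle bundle is trivial, so each reduced $2$-torus is the image of an invariant Lagrangian $3$-torus of $\mathbf{H}$) and integrating over the positive-measure set of good $\mu$ in $I$ produces a positive-measure set of invariant $3$-tori of $\mathbf{H}$. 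Hence $\mathfrak{Q}=\{\,Q\in\R^{+}:\mathcal{B}_{Q}\text{ is null}\,\}$ has full measure, which is the claim. The one genuinely nonroutine step is the non-vanishing of the fourth-order Birkhoff invariant; the remainder is symmetry reduction, the two sign hypotheses H1 and H2 entering the Hessian, and a standard KAM-plus-Fubini argument.
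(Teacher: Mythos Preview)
Your outline---reduce by the $\SO{2}$ symmetry, locate the relative equilibrium at $(r,s)=(\rs,s_*(\mu))$, verify via H1--H2 that the Hessian of $\mathbf{H}_\mu$ is positive definite, then invoke KAM near the elliptic fixed point---matches Section~5 of the paper, and your Hessian computation is correct. The gap is the twist condition. You need the fourth-order Birkhoff invariant $\tau(\mu,Q)$ to be not identically zero, and you argue that as $Q\to\infty$ it tends to a nonzero multiple of the anharmonicity $\mathrm{d}^2e/\mathrm{d}I^2$ of the effective oscillator with potential $v_\mu(r)=v(r)+\mu^2/(2r^2)$. But H1--H2 do \emph{not} exclude isochronous $v_\mu$: take $v(r)=\tfrac12 r^2$ on the plane, where $v'=r>0$ and $rv''+v'=2r>0$, yet the radial motion in $v_\mu$ (the radial reduction of the planar isotropic oscillator) has period $\pi$ independent of energy, so $\mathrm{d}^2e/\mathrm{d}I^2\equiv0$ and your limiting argument yields nothing. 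You note the isochronous possibility but defer to ``a direct computation of $\tau$'' that is never supplied; without it, $\tau\not\equiv0$ is unproven and the Fubini step has no input.

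The paper sidesteps the twist condition altogether by trading it for R\"ussmann's dichotomy (its Theorem~\ref{thm:russmann0}, after R\"ussmann and Churchill--Pecelli--Sacolick--Rod): if the linear frequency pair $(\omega_1,\omega_2)$ at the equilibrium is Diophantine, then either some Birkhoff polynomial of degree $k>1$ fails to be a polynomial in $H_2$ (so KAM applies at that order) or all of them are, in which case $\mathbf{H}_\mu$ is real-analytically conjugate to a function of $H_2$ near the equilibrium and the neighbourhood is foliated by invariant tori. Either branch gives a positive-measure set of tori, with no twist computation required. The only analytic input needed is that the frequency ratio $\eta=\omega_1/\omega_2$ is a nonconstant real-analytic function of $a=1/Q$, which the paper reads off directly from the characteristic polynomial of the linearization (Remark~\ref{rem:om2}). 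That alone produces the full-measure set $\mathfrak{Q}$ of masses with Diophantine $(\omega_1,\omega_2)$, and the theorem follows. So the missing idea in your argument is not a calculation but a change of tool: replace the fourth-order twist test with R\"ussmann's integrable-or-twist alternative, and the isochronicity obstruction disappears.
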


This theorem is striking because, in contrast to previous work in the
area, there is no constraint that the thermostat mass be
``sufficiently small''. When this theorem is applied to the case
examined in~\cite{MR2519685} or a Lennard-Jones potential or the
spherical pendulum potential, one obtains:

\begin{corollary}
  \label{cor:llm-rot-inv}
  If
  \begin{enumerate}
  \item $M=\R^2$, $v(r) = r^2+r^4$ and $J = \R^+$; or
  \item $M=\R^2$, $v(r) = r^{-12} - r^{-6}$ and $J = (0,3/4)$; or
  \item $M=\sphere{2}$, $r = \sin(φ)$, $v = -\cos(φ)$ for $0<φ<π$ and $J = \R^+$,
  \end{enumerate}
  then, for all $T ∈ J$, there is a full-measure set of masses
  $𝔔 ⊂ \R^+$ such that if $Q ∈ 𝔔$, then $𝐇$ enjoys a positive-measure
  set of invariant KAM tori.
\end{corollary}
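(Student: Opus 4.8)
The plan is to deduce the corollary directly from Theorem~\ref{thm:llm-rot-inv}: in each of the three cases, and for every admissible temperature $T$, I would exhibit a rotationally-invariant chart on which hypotheses H1--H4 hold, and then quote that theorem. Hypothesis H4 is a matter of bookkeeping. In the two flat cases the kinetic energy in polar coordinates comes from the Euclidean metric $\D r^2 + r^2\,\D\theta^2$, so $c(r)\equiv 1$ and $H$ already has the required form, with $M = \R^2$ (resp.\ $M = \R^2\setminus\{0\}$ in the singular Lennard-Jones case, $v$ being real-analytic there). In the spherical case, writing $\phi$ for the polar angle and $r = \sin\phi$, the round metric $\D\phi^2 + \sin^2\phi\,\D\theta^2$ becomes $(1-r^2)^{-1}\,\D r^2 + r^2\,\D\theta^2$, so $c(r) = \sqrt{1-r^2}$ and $H$ again takes the stated form, $v = -\cos\phi$ being real-analytic on $\sphere{2}$.

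For case~(1) I take $J = \R^+$: $v'(r) = 2r + 4r^3 > 0$ gives H1; $r v''(r) + v'(r) = (r v'(r))' = 4r + 16 r^3 > 0$ gives H2; and $r v'(r) = 2r^2 + 4r^4$ is a continuous increasing bijection of $\R^+$ onto $\R^+$, so every $T > 0$ equals $\rs v'(\rs)$ for a unique $\rs \in \R^+$, which is H3. For case~(3) I work on the open lower hemisphere, where $r = \sin\phi \in (0,1)$ and $v(r) = -\sqrt{1-r^2}$: then $v'(r) = r(1-r^2)^{-1/2} > 0$ (H1), $(r v'(r))' = r(2-r^2)(1-r^2)^{-3/2} > 0$ (H2), and $r v'(r) = r^2(1-r^2)^{-1/2}$ increases from $0$ to $+\infty$ as $r \to 1^{-}$, so again every $T > 0$ is realised as $\rs v'(\rs)$ with $\rs \in (0,1)$.

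Case~(2) needs a moment's care, because on the interval $(0,3/4)$ named in the statement the Lennard-Jones potential is strictly \emph{decreasing}, so $(0,3/4)$ is not the interval $J$ of Theorem~\ref{thm:llm-rot-inv} but the set of admissible temperatures. Put $u = r^{-6}$; then $v'(r) = 6r^{-7}(1-2u)$ and $r v'(r) = 6u - 12u^2$, so $v$ is increasing exactly for $r > 2^{1/6}$ (equivalently $u < 1/2$), while $r v''(r) + v'(r) = (r v'(r))' = (6-24u)(-6 r^{-7})$ is positive exactly for $r < 2^{1/3}$ (equivalently $u > 1/4$). Hence on $J := (2^{1/6},\,2^{1/3})$ both H1 and H2 hold, and there $r v'(r)$ increases from its value $0$ at $r = 2^{1/6}$ to its value $3/4$ at $r = 2^{1/3}$; so every $T \in (0,3/4)$ equals $\rs v'(\rs)$ for a unique $\rs \in J$, which is H3, and I let $U$ be any rotationally-invariant annulus containing the circle $r = \rs$. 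With H1--H4 verified, Theorem~\ref{thm:llm-rot-inv} applies in each case and produces the asserted full-measure set of masses $\mathfrak{Q} \subset \R^+$.

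I expect the only step that is not purely mechanical to be the observation, in case~(2), that $(0,3/4)$ is the \emph{image} of $r \mapsto r v'(r)$ over the correct window rather than the window itself --- that is, pinning down the window $(2^{1/6}, 2^{1/3})$, which lies between the bottom of the Lennard-Jones well at $r = 2^{1/6}$ and the critical point $r = 2^{1/3}$ of $r \mapsto r v'(r)$ --- together with correctly reading off $c(r) = \sqrt{1-r^2}$ from the round metric in the spherical case. Everything else is a short computation, and the corollary carries no dynamical content beyond Theorem~\ref{thm:llm-rot-inv}.
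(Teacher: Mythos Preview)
Your proposal is correct and matches the paper's intent: the corollary is stated as a direct consequence of Theorem~\ref{thm:llm-rot-inv}, and the paper gives no separate proof, so verifying H1--H4 case by case and invoking the theorem is exactly what is expected. Your identification of the Lennard-Jones window $(2^{1/6},2^{1/3})$ and the observation that the $J$ in the corollary names the range of admissible temperatures (the image of $r\mapsto r v'(r)$) rather than the domain interval of Theorem~\ref{thm:llm-rot-inv} is the one nontrivial point, and you handle it correctly.
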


In fact, Theorem~\ref{thm:llm-rot-inv} and its corollary are proven
for a more general class of single thermostat, called
\defn{generalized, variable-mass thermostats of order $2$} (see
definition~\ref{def:order-2-thermostat-generalized}), which include
the thermostats of Winkler and Tapias, Bravetti \&
Sanders~\cite{proquest1859245445,10.12921/cmst.2016.0000061}.

I believe that for a ``generic'' rotationally-invariant potential $v$,
and temperature $T$, the set of masses $𝔔$ is $\R^+$ less a finite
set. However, the calculations used to prove
Theorem~\ref{thm:llm-rot-inv} do not appear to lend themselves to a
proof of this.

\subsubsection{Thermostating integrable hamiltonians}
\label{sssec:llm-int}

The current paper sharpens Theorem~\ref{thm:llm-ave} of Legoll, Luskin
\& Moeckel and shows that their speculation is true for ``typical''
real-analytically integrable hamiltonians. To explain, some
terminology and definitions are needed. In the sequel, $M$ is a
real-analytic $n$-dimensional manifold; $\cotangent M$ is its
cotangent bundle; $H : \cotangent M → \R$ is a real-analytic function
(a hamiltonian) that is completely integrable with respect to the
canonical Poisson bracket (see \S \ref{sec:int-ham} below);
$U ⊂ \cotangent M$ is a toroidal cylinder (i.e. a symplectomorph of
$\T^n × B$ where $B ⊂ \R^n$ is a diffeomorph of the unit ball);
$(θ,I) : U → \T^n × B$ are real-analytic angle-action variables for
$H$; $H | U = G \circ I$ for some real-analytic function $G$. The
\defn{instantaneous temperature} of the system $H$ at the point
$(q,p) ∈ \cotangent M$ is $\kp{\dot{q}}{p}$ and the \defn{orbit mean
  temperature}, $κ$, is the Birkhoff average of the instantaneous
temperature. It is shown that $κ(I) = \kp{\D{G}(I)}{I}$ (see \S
\ref{sec:temp-fun}). Finally, the following notion of torsion is
needed.

\begin{definition}[\cite{MR2505319}]
  \label{def:r-non-deg}
  A real-analytic map $f : U → \R^m$ is R-non-degenerate if the
  smallest linear subspace that contains $f(U)$ is $\R^m$.
\end{definition}

This geometric definition of R-non-degeneracy is equivalent to the
more standard definition that the partial derivatives of $f$ at a
point span $\R^m$.

The main result of the current paper is:

\begin{theorem}
  \label{thm:main-thm-1}
  Let $H : \cotangent M → \R$ be a real-analytic hamiltonian that is
  completely integrable with real-analytic integrals. Assume that
  $T>0$ is a regular value of the orbit mean temperature function $κ$
  and let
  $$\thermostateqm{T} = \set{(θ,I,s,p_s) \mid κ(I/s)=T, p_s=0}.$$ Then,
  there exists a real-analytic symplectomorphism $φ$,
  $(θ,I,s,p_s)=φ(\hat{θ},\hI,v,V)$, whose image contains a
  neighbourhood of $\thermostateqm{T}$, and a real-analytic function
  $\bar{G}$ such that the Nos{é}-thermostated hamiltonian $𝐇$ is
  transformed to
  \begin{equation}
    \label{eq:main-thm-normal-form}
    𝐇 ⎄ φ = \bar{G}(\hat{I}) + ½ ε α(\hI) \left( v^2 + V^2 \right) + ε^{\frac{3}{2}} 𝐏_{ε} ⎄ φ
  \end{equation}
  where $ε^2 = 1/Q$, $𝐏_{ε}$ is real analytic in $ε$ for $ε>0$,
  continuous in $ε$ at $ε=0$ and real analytic in the other
  variables.

  If the re-scaled frequency map $Ω=(\D{\bar{G}},α)$ is
  R-non-degenerate, then, there exists a $Q_o = Q_o(T,G) > 0$ such
  that for each $Q ∈ (Q_o,∞)$ there is a neighbourhood of
  $\thermostateqm{T}$, such that the hamiltonian $𝐇$ has a
  positive-measure set of invariant tori in that neighbourhood.
\end{theorem}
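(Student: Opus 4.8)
The plan is to establish the normal form \eqref{eq:main-thm-normal-form} by a slow--fast analysis near $\thermostateqm{T}$, and then to feed that normal form into a R\"ussmann-type degenerate KAM theorem. Since $Q=\varepsilon^{-2}$ is large, the thermostat variable is slow ($\dot s=\varepsilon^{2}p_{s}$), so the first step is to treat, for frozen $(s,p_{s})$, the fast Hamiltonian $\mathbf{h}_{s}(q,p):=H(q,ps^{-1})$; it is completely integrable, since it Poisson-commutes (in $(q,p)$) with the images under $p\mapsto ps^{-1}$ of the integrals of $H$, and its action variables --- which I write $\hI$ --- are adiabatic invariants. Passing to action--angle variables $(\hat\theta,\hI)$ for $\mathbf{h}_{s}$ (an $s$-dependent canonical change, which shifts $p_{s}$ by a bounded generating-function term that will be absorbed later) puts $\mathbf{H}$ in the shape $\mathbf{H}=W(s;\hI)+\tfrac12\varepsilon^{2}p_{s}^{2}+(\text{coupling remainder})$ with effective potential $W(s;\hI)=G(\hI/s)+T\ln s$. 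Because the fast flow carries $(q,ps^{-1})$ along a time-rescaled physical orbit of $H$, the fast orbit-average of the instantaneous temperature equals $\kappa(\hI/s)$, so $\partial_{s}W=s^{-1}\bigl(T-\kappa(\hI/s)\bigr)$; as $T$ is a regular value of $\kappa$, the critical set of $W$ is the real-analytic graph $s=s_{*}(\hI)$ cut out by $\kappa(\hI/s)=T$ together with $p_{s}=0$, which is (a copy, after these coordinate changes, of) $\thermostateqm{T}$. Put $c(\hI):=\partial_{ss}^{2}W(s_{*}(\hI);\hI)=-s_{*}^{-1}\,\partial_{s}\kappa(\hI/s)|_{s_{*}}$ and $\alpha(\hI):=\sqrt{c(\hI)}$; the analysis takes place on the region where $c>0$, i.e.\ where $\thermostateqm{T}$ is a family of elliptic equilibria of the slow subsystem (otherwise there are no tori at all).

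Next I would blow up a neighbourhood of $\thermostateqm{T}$ by the symplectic rescaling $s-s_{*}(\hI)=O(\varepsilon^{1/2})\,v$, $p_{s}=O(\varepsilon^{-1/2})\,V$, with the implied (analytic, $\hI$-dependent) factors chosen so the map is canonical, and Taylor-expand $W$ about $s=s_{*}(\hI)$. The constant term is $\bar G(\hI):=W(s_{*}(\hI);\hI)=G(\hI/s_{*}(\hI))+T\ln s_{*}(\hI)$; the linear term vanishes by criticality; the quadratic term combines with $\tfrac12\varepsilon^{2}p_{s}^{2}$ to give exactly $\tfrac12\varepsilon\,\alpha(\hI)(v^{2}+V^{2})$; the cubic and higher Taylor terms are $O(\varepsilon^{3/2})$, since $(s-s_{*})^{k}=O(\varepsilon^{k/2})$. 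The coupling remainder and the $p_{s}$-shift from the $s$-dependent action--angle change likewise enter at order $\varepsilon^{3/2}$ (for instance $\varepsilon^{2}p_{s}\cdot O(1)=\varepsilon^{2}O(\varepsilon^{-1/2})$). Collecting everything beyond the quadratic truncation into $\varepsilon^{3/2}\mathbf{P}_{\varepsilon}$ --- which is then real analytic in $\varepsilon^{1/2}$, hence in $\varepsilon$ for $\varepsilon>0$, and continuous at $\varepsilon=0$ --- and composing the coordinate changes into $\varphi$ gives \eqref{eq:main-thm-normal-form}; the quadratic truncation is an integrable $(n+1)$-degree-of-freedom system, which is in effect a sharpened form of Theorem~\ref{thm:llm-ave}.

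For the KAM conclusion one introduces action--angle variables $(\psi,\rho)$ for the slow oscillator, $\tfrac12(v^{2}+V^{2})=\rho$, so that $\mathbf{H}\circ\varphi=\bar G(\hI)+\varepsilon\,\alpha(\hI)\rho+\varepsilon^{3/2}\mathbf{P}_{\varepsilon}(\hat\theta,\hI,\psi,\rho)$. The integrable part has frequency map $\omega_{\varepsilon}(\hI,\rho)=\bigl(\D{\bar{G}}(\hI)+\varepsilon\rho\,\D\alpha(\hI),\,\varepsilon\,\alpha(\hI)\bigr)$, which --- after rescaling the last component by $\varepsilon^{-1}$ --- is an $O(\varepsilon)$-perturbation of $(\hI,\rho)\mapsto\Omega(\hI)=(\D{\bar{G}}(\hI),\alpha(\hI))$; consequently, if $\Omega$ is R-non-degenerate (Definition~\ref{def:r-non-deg}) then so is $\omega_{\varepsilon}$ for every small $\varepsilon>0$. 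Since $\mathbf{P}_{\varepsilon}$ is bounded uniformly for $\varepsilon\in(0,\varepsilon_{o}]$ and the perturbation $\varepsilon^{3/2}\mathbf{P}_{\varepsilon}$ is of strictly smaller order than the slow-frequency scale $\varepsilon$, a parameter-dependent degenerate KAM theorem of R\"ussmann type (\cf~\cite{MR2505319}) applies once $\varepsilon$ lies below a threshold $\varepsilon_{o}=\varepsilon_{o}(T,G)$; taking $Q_{o}:=\varepsilon_{o}^{-2}$, for every $Q\in(Q_{o},\infty)$ a positive-measure set of the tori $\{\hI=\mathrm{const},\ \rho=\mathrm{const}\}$ survives as invariant $(n+1)$-tori, all contained in a neighbourhood of $\thermostateqm{T}$; pushing them forward by $\varphi$ yields the theorem.

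The step I expect to be the main obstacle is the normal-form reduction of the second paragraph: one must verify that the $s$-dependent action--angle change for $\mathbf{h}_{s}$, the shift it induces on $p_{s}$, and the entire Taylor tail of the effective potential all contribute only at order $\varepsilon^{3/2}$ after the blow-up, so that a single clean remainder $\varepsilon^{3/2}\mathbf{P}_{\varepsilon}$ with the asserted analyticity emerges --- indeed the exponents $\pm\tfrac12$ in the blow-up are dictated precisely by the requirement that this remainder stay below the $O(\varepsilon)$ scale of the slow frequency. A subordinate but genuine difficulty is that $\omega_{\varepsilon}$ becomes planar as $\varepsilon\to0$ (its last component is $O(\varepsilon)$), so the smallness condition driving the KAM iteration must be tracked with its correct $\varepsilon$-dependence rather than quoted off the shelf; this is exactly where the error being $\varepsilon^{3/2}$, and not merely $o(1)$, is essential.
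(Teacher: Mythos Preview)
Your proposal is correct and follows essentially the same route as the paper: the three-step symplectic reduction you outline (an $s$-dependent action--angle change for $H_{1/s}$ with the attendant $p_s$-shift, recentering on the graph $s=s_*(\hI)$, and the $\varepsilon^{\pm1/2}$ blow-up) is precisely the composition $f\circ\hat f\circ\tilde f$ constructed in Lemma~\ref{lem:normal-form-near-T}, and your effective potential $W(s;\hI)=G(\hI/s)+T\ln s$ together with $\alpha=\sqrt{\partial_{ss}^2 W|_{s_*}}$ reproduces the paper's $\hat G(I;1/\hat s_o)+T\ln\hat s_o$ and the $\alpha$ of~\eqref{eq:gamma-alpha}. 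The KAM step is likewise the same---the paper invokes the Chierchia--Pusateri degenerate theorem (Theorem~\ref{thm:modified-chierchia-pusateri}, which is~\cite{MR2505319}) exactly because, as you note, the slow frequency is $O(\varepsilon)$ while the remainder is $O(\varepsilon^{3/2})$.
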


Lemma~\ref{lem:normal-form-near-T} has more information about the
function $\bar{G}$ and the neighbourhood of the thermostatic
equilibrium set $\thermostateqm{T}$. Because the first step in
creating the symplectomorphism $φ$ is indirect, an explicit
construction of $\bar{G}$ seems impossible (except in some special
cases, see remarks~\ref{rem:normal-form-near-T-example}
and~\ref{rem:normal-form-near-T-example-contd} below). In lieu of this
construction, one is able to prove:

\begin{corollary}
  \label{cor:main-cor-1}
  Assume the hypotheses of Theorem~\ref{thm:main-thm-1}. Then, the
  re-scaled frequency map $Ω$ of $𝐇$ can be made R-non-degenerate by
  means of a $C^2$-small perturbation of $H$ in the space of
  real-analytically integrable hamiltonians.
\end{corollary}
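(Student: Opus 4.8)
The plan is to reduce the claim to a finite--dimensional real--analytic family of integrable perturbations and then exploit analyticity, so that it suffices to produce a single arbitrarily small perturbation that works. Fix finitely many real--analytic functions $\chi_1,\dots,\chi_N$ of the $H$--action variables on $U$ (chosen below). For $s=(s_1,\dots,s_N)\in\R^N$ near $0$ set $H_s=H+\sum_j s_j(\chi_j\circ I)$; this is realizable as a globally defined, real--analytic, completely integrable perturbation of $H$ (writing $H=\mathcal H\circ F$ for the given integrals $F$, replace $\mathcal H$ by $\mathcal H+\sum_j s_j\mathcal K_j$ with $\chi_j\circ I=\mathcal K_j\circ F$ on $U$), and $\|H_s-H\|_{C^2}\to0$ as $s\to0$. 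On $U$ this replaces $G$ by $G_s=G+\sum_j s_j\chi_j$, so the orbit mean temperature becomes $\kappa_s(I)=\kappa(I)+\sum_j s_j\kp{\D{\chi_j}(I)}{I}$; after shrinking $U$ to a toroidal cylinder on which $T$ is a regular value of $\kappa$ (harmless, since the conclusion of Theorem~\ref{thm:main-thm-1} is local near $\thermostateqm{T}$), $T$ remains a regular value of $\kappa_s$ for small $s$. Theorem~\ref{thm:main-thm-1} and Lemma~\ref{lem:normal-form-near-T} then attach to each $H_s$ a symplectomorphism $\varphi_s$, a function $\bar{G}_s$, a normal frequency $\alpha_s$, and the re--scaled frequency map $\Omega_s=(\D{\bar{G}_s},\alpha_s)$; by the uniformity of the construction these depend real--analytically on $s$ near $0$ and are all defined on one fixed closed ball $K$ in the $\hI$--coordinates.

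Next I would recast R--non--degeneracy analytically. By Definition~\ref{def:r-non-deg}, $\Omega_s$ is R--non--degenerate on $K$ iff its $n+1$ component functions are $\R$--linearly independent on $K$, iff there are points $x_0,\dots,x_n\in K$ with $\det\bigl[\Omega_s(x_0)\mid\dots\mid\Omega_s(x_n)\bigr]\ne0$. Hence it suffices to exhibit a real--analytic arc $t\mapsto s(t)$ with $s(0)=0$ together with points $x_0,\dots,x_n\in K$ such that the single real--analytic function $t\mapsto\det\bigl[\Omega_{s(t)}(x_0)\mid\dots\mid\Omega_{s(t)}(x_n)\bigr]$ is not identically zero: by analyticity it is then nonzero for all small $t\ne0$, and $H_{s(t)}$ is the desired $C^2$--small perturbation. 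In particular one never needs openness or density of the R--non--degenerate set, only the existence of one arbitrarily small admissible perturbation.

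To build the arc I would differentiate the normal--form construction of Lemma~\ref{lem:normal-form-near-T} in the perturbation parameter. Adding $\delta(\chi\circ I)$ to $H$ changes, to first order in $\delta$: the function $\bar{G}$ by $\delta$ times the function obtained by applying to $\chi$ the same substitution that produces $\bar{G}$ from $G$ (an additive function of the actions passes through the indirect change of variables and the subsequent averaging to leading order), modulo a correction whose precise form is extracted from the construction; and the normal frequency $\alpha$ by a first--order term built from the second derivatives of $\chi$ along $\thermostateqm{T}$, since $\alpha$ is the transverse frequency, governed by $\D{\kappa}$ and hence by the Hessian of $G$. Taking the $\chi_j$ to run over the coordinate monomials in $I$ of degrees $1,2,3$: the degree $1$ and $2$ monomials produce, to first order, an arbitrary affine perturbation of $\D{\bar{G}_s}$, which already makes $\D{\bar{G}_s}$ R--non--degenerate (the classical genericity of twist), and the degree $3$ monomials produce an essentially independent affine--in--$\hI$ perturbation of $\alpha_s$ along $\thermostateqm{T}$, which removes any surviving linear relation $\mu\cdot\D{\bar{G}_s}+c\,\alpha_s\equiv0$. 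Concretely one checks that the derivative at $s=0$ of the map sending $s$ to a suitable finite jet of $(\D{\bar{G}_s},\alpha_s)$ at well--chosen points of $K$ is surjective onto a subspace large enough to move the determinant above off zero, which yields the arc.

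The main obstacle is precisely the first--order dependence asserted in the previous paragraph: because $\varphi$ is constructed indirectly, $\partial_s\bar{G}_s$ and $\partial_s\alpha_s$ cannot be read off a closed formula, and one must carry the perturbation parameter through the proof of Lemma~\ref{lem:normal-form-near-T} --- the initial indirect change of variables, the rescaling $\epsilon^2=1/Q$, and the Birkhoff normalization that produces the term $\tfrac{1}{2}\epsilon\alpha(\hI)(v^2+V^2)$ --- verifying analytic dependence on $s$ and identifying the leading--order contributions, while keeping the correction terms under enough control that the Jacobian--surjectivity check still goes through. Once that bookkeeping is complete, the linear--algebra and analyticity argument above closes the proof.
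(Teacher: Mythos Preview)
Your strategy and the paper's diverge at the key step, and the paper's route is substantially simpler. You perturb $G$ by monomials in the $H$--actions and then propose to differentiate the entire normal--form construction of Lemma~\ref{lem:normal-form-near-T} in the perturbation parameter; you correctly flag this as the main obstacle. The paper sidesteps that obstacle entirely: it perturbs $\hat{G}(I;1/s)$ by $\hat g(I,s)=\tfrac12(s-\hat s_o(I))^2\,\Phi(I)$, a function vanishing to second order along the thermostatic equilibrium. By construction this leaves $\hat s_o$ and $\hat G_1$ (hence the first $n$ components of $\Omega$) \emph{unchanged}, and alters only $\hat G_{22}$, additively by $\eta\Phi$. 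Since $\alpha$ is an explicit function of $\hat G_{22}$, no tracking through the normal form is needed; a short contradiction argument on the explicit formula \eqref{eq:rescaled-frequency-map} then shows that any $\Phi$ outside a thin algebraic family produces an R--non--degenerate $\Omega$. The corresponding global perturbation is $\mathbf H+\psi(F\circ\varphi_{1/s},s)$, i.e.\ a perturbation depending on both the integrals and $s$, not a function of the $H$--actions alone.

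There is also a genuine soft spot in your sketch beyond the acknowledged bookkeeping. Your claim that degree--$1,2$ monomials in $I$ give ``an arbitrary affine perturbation of $\D{\bar G_s}$'' and that degree--$3$ monomials move $\alpha_s$ ``essentially independently'' is not obvious: perturbing $G$ perturbs $\kappa$, hence $\hat s_o$, and this feeds back into both $\hat G_1(I;1/\hat s_o)$ and $\alpha$ simultaneously through the implicit relation \eqref{eq:hats-0}. The resulting first--order map from your parameters to the jet of $(\D{\bar G_s},\alpha_s)$ is not a priori surjective in the way you assert, and establishing that is the whole proof, not residual bookkeeping. Your analytic--arc/determinant reduction is sound and would close the argument once that surjectivity is verified, but the paper's device of perturbing tangentially to second order at $s=\hat s_o$ is precisely the idea that makes the verification trivial.
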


See Corollary~\ref{cor:main-cor} for the precise, somewhat technical,
statement of this corollary.

\begin{remark}
  \label{rem:main-remark}
  Like Theorem~\ref{thm:llm-rot-inv}, Theorem~\ref{thm:main-thm-1} and
  its corollary hold for generalized, variable-mass thermostats of
  order $2$. In addition, the theorems hold in the smooth ($C^{∞}$)
  category due to the work of Herman-F{é}joz~\cite{MR2104595}. A
  puzzling aspect of Theorem~\ref{thm:main-thm-1} is that the
  Nos{é}-thermostated harmonic oscillator produces a normal form
  $𝐇 ⎄ φ$ whose re-scaled frequency map is \textem{not}
  R-non-degenerate (see
  remark~\ref{rem:normal-form-near-T-example-contd}). So, the theorem
  here is not proven by reworking an existing proof, but contributes
  some novel ideas. In addition, it is likely that a better expansion
  of $𝐇 ⎄ φ$ in the thermostat variables $(v,V)$ will also give a
  proof that the Nos{é}-thermostated 1-d harmonic oscillator has a
  frequency map that is R-non-degenerate.
\end{remark}

\subsection{Outline}
\label{sec:outline}

This note is organized as follows: \S 2 reviews salient facts about
integrable hamiltonian systems and the effect of momentum re-scalings
on such systems; \S 3 defines a general class of single thermostats
for which the stated theorems can be proven; \S 4 contains the
material to prove Theorem~\ref{thm:main-thm-1} and shows how the
theorem implies an improvement on Nos{é}'s heuristic approximation of
the frequency of the thermostat's oscillations; \S 5 proves
Theorem~\ref{thm:llm-rot-inv} and shows numerical calculations that
confirm the theorem's predictions; \S 6 contains the necessary
material from properly degenerate KAM theory.

\section{Integrable Hamiltonians}
\label{sec:int-ham}

Let $M$ be an $m$-dimensional, real-analytic manifold. The type of
Hamiltonian that is considered in this paper is a real-analytic
function $H : \cotangent M → \R$ defined on the cotangent bundle
$\cotangent M = \set{(q,p) \mid q ∈ M, p ∈ \cotangent_q M }$ of the
real-analytic manifold $M$. Say that $F : \cotangent M → \R$ enjoys
\defn{fibre-wise super-linear growth} if, for each, $(q,p) ≠ (q,0)$,
$F(q,σ p)/σ → ∞$ as $σ → ∞$. The point-wise (or instantaneous)
temperature at $(q,p)$ is defined to be $\kp{p}{H_p}$. If the
instantaneous temperature function is bounded above by a constant $C$,
then a simple comparison shows that $|H(q,σ p)/σ|$ is bounded above by
$|H(q,p)|/σ + C \ln(σ)/σ$, so $H$ cannot enjoy super-linear
growth. Henceforth, it is assumed that $H$ enjoys super-linear growth,
whence the instantaneous temperature function is unbounded above on
$\cotangent_q M$ for each $q ∈ M$.

The cotangent bundle $\cotangent M$ carries the tautological Liouville
1-form $λ$, canonical symplectic form $Ω = \D{λ}$ and its dual, the
Poisson bracket $\pb{}{}$. If $q=(q_1,\ldots,q_m)$ are local
coordinates on $M$, then any $1$-form in $\cotangent_q M$ is uniquely
expressed as $p_1 \D{q}_1 + \cdots + p_m \D{q}_m$ for some scalars
$p_1, \ldots, p_m$. These ``adapted'' coordinates
$(q_1,\ldots,q_m,p_1,\ldots,p_m)$ on $\cotangent M$ satisfy the
properties
\begin{align} \label{al:poisson-bracket}
  λ&=\sum_{i=1}^m p_i\, \D{q}_i, &&& \pb{q_i}{p_j} &= δ_{ij}, &&& \pb{q_i}{q_j}&=\pb{p_i}{p_j}=0,
\end{align}
where $δ_{ij}$ is Kronecker's delta-function. A \defn{Darboux system
  of coordinates} is a coordinate system
$(x_1,\ldots,x_m,y_1,\ldots,y_m)$ on $\cotangent M$ that satisfies the
above Poisson bracket relations. When
$(x_1,\ldots,x_m,y_1,\ldots,y_m)$ is a Darboux system of coordinates,
the 1-form $\sum_{i=1}^m y_i \D{x}_i$ equals the Liouville 1-form $λ$
up to the addition of a closed 1-form.

The Poisson bracket endows the space of real-analytic (resp. smooth)
functions on $\cotangent M$ with the structure of a Lie algebra.

Recall the notion of complete integrability:

\begin{definition} \label{def:complete-integrability}
  Let there exist a real-analytic map $F : \cotangent M → \R^m$, where
  $m$ is the dimension of $M$, such that
  \begin{enumerate}
  \item the components of $F$ Poisson commute;
  \item $F$ has a regular value;
  \item $H = h \circ F$ is the pull-back of a real-analytic $h : \R^m
    → \R$.
  \end{enumerate}
  Then, $H$ is said to be \defn{completely integrable} and $F$ is a
  first-integral map for $H$.
\end{definition}

The significance of complete integrability is due to the following

\begin{theorem}[Liouville--Arnol'd--Duistermaat]
  \label{thm:louville-arnold}
  Let $H : \cotangent M → \R$ be completely integrable with
  first-integral map $F : \cotangent M → \R^m$. The set of regular
  points of $F$, $L ⊂ \cotangent M$, is an open and dense set that is
  fibred by Lagrangian tori and satisfies
  \[
    \xymatrix@R10mm@C10mm@M3mm{
                         & \T \ar@{^{(}->}[r]^{{ι}} \ar@{_{(}->}[d]^{{ι}} & L \ar@{.>}[d]_{π} \ar@{^{(}->}[r]^{{ι}}   & \cotangent M \ar@{->}[dr]^F \\
       P \ar@{^{(}->}[r] & \cotangent B \ar@{->>}[r]                      & {\mathfrak L}=\cotangent B/P \ar@{->>}[r] & B \ar@{.>}[r] & \R^m        \\
      }
  \]
  where $\xymatrix{\ar@{.>}[r]&}$ denotes a local isomorphism, $P$ is
  a sub-bundle $\cotangent B$ such that for each $b ∈ B$, $P_b$ is the
  set of pullbacks $π^*(\D{f})_b$ of locally periodic hamiltonians
  $π^* f$. The quotient $\cotangent_b B / P_b$ is a lagrangian torus
  so that $L$ is locally isomorphic to ${\mathfrak L}$. The
  obstruction to a global isomorphism is a $2$-dimensional Chern class
  that was identified by Duistermaat~\cite{MR596430,MR906389}.
\end{theorem}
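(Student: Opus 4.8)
This is the classical Liouville--Arnol'd construction together with Duistermaat's analysis of its global obstruction, so the plan is to recall the ingredients in order and to indicate where the real work lies. \emph{Openness} of $L$ is immediate, since having rank $m$ is an open condition on $\D{F}$. For \emph{density} one uses the standing real-analyticity hypothesis: $\cotangent M\setminus L$ is cut out by the simultaneous vanishing of all $m\times m$ minors of $\D{F}$, and since $F$ has a regular value these real-analytic functions do not all vanish identically, so $\cotangent M\setminus L$ is contained in the zero set of one of them that does not and is therefore nowhere dense. (For a merely smooth $F$ this step can fail.)

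On $L$ I would then run the local theory. Because $\pb{F_i}{F_j}=0$ the Hamiltonian fields $X_{F_1},\dots,X_{F_m}$ commute, and because $\D{F}$ has rank $m$ they are pointwise independent; they integrate the fibres of $F$ by an $m$-dimensional distribution that is isotropic, since $\Omega(X_{F_i},X_{F_j})=\pb{F_i}{F_j}=0$, and hence Lagrangian. A compact connected fibre component is accordingly an orbit of the locally free $\R^m$-action so generated, necessarily of the form $\R^m/\Lambda$ for a rank-$m$ lattice $\Lambda$, i.e.\ an $m$-torus; for the mechanical Hamiltonians with fibre-wise super-linear growth considered here the energy levels, and hence the fibres of $F$, are compact, which makes ``fibred by Lagrangian tori'' literal and produces the torus fibration $\pi:L\to B$ over the (local) leaf space $B$.

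Next I would construct the model $\mathfrak{L}$ and the local isomorphism. For a locally defined $f$ on $B$ the pull-back $\pi^*f$ is constant on leaves, so $X_{\pi^*f}$ lies in the symplectic orthogonal of the Lagrangian leaves, hence is tangent to them, and on each leaf $\T_b$ it is a translation-invariant vector field; thus $(\D{f})_b\mapsto X_{\pi^*f}|_{\T_b}$ identifies $\cotangent_b B$ with the space of such fields, and $P_b$ is defined as the preimage of the period lattice of $\T_b$ under this identification (equivalently, the covectors $(\D{f})_b$ for which $\pi^*f$ is locally periodic), so that $\cotangent_b B/P_b\cong\T_b$. The point to verify is that $P$ is locally generated by \emph{exact} forms, namely the action differentials $\D{I_j}$ with $I_j(b)=\frac{1}{2\pi}\oint_{\gamma_j}\lambda$ for a locally constant basis $\gamma_1,\dots,\gamma_m$ of $H_1(\T_b;\Z)$, these being well defined because $\lambda$ restricts to a closed form on the Lagrangian leaf; this shows at once that $I=(I_1,\dots,I_m)$ is a local coordinate on $B$ and that fibre-translation by sections of $P$ preserves the canonical symplectic form of $\cotangent B$, so that form descends to $\mathfrak{L}=\cotangent B/P$, a bundle of Lagrangian tori over $B$. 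In these action coordinates $P\cong B\times\Z^m$ and $\mathfrak{L}\cong B\times\T^m$ with the standard symplectic form, and flowing along the commuting fields $X_{\pi^*I_j}$ from a local section produces conjugate angle variables on $L$, hence a symplectomorphism of a neighbourhood in $L$ onto this model intertwining the projections to $B$: this is the local isomorphism $L\cong\mathfrak{L}$ of the diagram, with $H=h\circ F$ a function of $I$ alone.

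Finally, over an open cover of $B$ these local isomorphisms differ on overlaps by fibre-translations, i.e.\ by sections of $\mathfrak{L}$, and so assemble into a \v{C}ech $1$-cocycle; together with the monodromy of the lattice bundle $P\to B$ (the $\pi_1(B)$-action on $\Z^m$ obtained by continuing the cycles $\gamma_j$), the obstruction to a global isomorphism $L\cong\mathfrak{L}$ is a degree-$2$ Chern class of the torus bundle $L\to B$, which lies in $H^2(B;\Z^m)$ once the monodromy is trivial. That this class is the \emph{complete} obstruction to globally defined angle-action variables is precisely Duistermaat's theorem, so here the plan is to invoke~\cite{MR596430,MR906389} for it rather than to reprove it; this global cohomological identification is the only genuine obstacle, everything preceding it being routine bookkeeping on the local construction.
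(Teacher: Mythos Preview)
The paper does not prove this theorem: it is stated as a classical result, with citations to Duistermaat~\cite{MR596430,MR906389}, and the paragraph following it merely unpacks the local action--angle picture for later use. So there is no ``paper's own proof'' to compare against; your sketch is essentially the standard argument one finds in Duistermaat's paper and in textbook treatments, and as such it is appropriate here.

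One small point worth tightening: your justification of compactness of the fibres (``energy levels, and hence the fibres of $F$, are compact'') leans on fibre-wise super-linear growth of $H$, but that hypothesis only gives properness of $H$ along each cotangent fibre $\cotangent_q M$, not properness of $H$ on $\cotangent M$ when $M$ itself is non-compact. Without compact fibres the connected components of $F^{-1}(c)\cap L$ can be cylinders $\T^k\times\R^{m-k}$ rather than tori, and the word ``fibred by Lagrangian tori'' in the statement is then an additional hypothesis rather than a conclusion. This does not affect anything downstream in the paper, which works locally near a fixed regular torus, but if you want the sketch to stand on its own you should either add a properness assumption on $F$ (or on $H$, together with compactness of $M$), or phrase the toral conclusion as conditional on compact connected fibre components.
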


In the classical Liouville-Arnold theorem, the focus is on a
neighbourhood of a single, regular Lagrangian torus $L_b = π^{-1}(b)$
for some $b ∈ B$. In that setting, there is a neighbourhood $U ⊂ B$ of
$b$ with coordinates $(I_1,\ldots,I_n)$ where the functions $π^*(I_j)$
generate periodic hamiltonian flows on $π^{-1}(U) ⊂ L$ with unit
primitive period. The tautological Liouville $1$-form
$λ = \sum_{i=1}^n θ_i\,\D{I}_i$ is well-defined on $\cotangent_U B$
and defines a local diffeomorphism $\cotangent_U B → π^{-1}(U)$ that
induces a diffeomorphism $\cotangent_U B/P_U → π^{-1}(U)$ when the
$θ_i$ are taken to be defined $\bmod 1$. This implies that the
pull-back of the Poisson bracket $\pb{}{}_U$ on $π^{-1}(U)$ to
$\cotangent_U B/P_U$ satisfies
\begin{align} \label{al:poisson-bracket-U}
  \pb{I_i}{θ_j}_U &= Δ_{ij}, &&& \pb{I_i}{I_j}_U &= \pb{θ_i}{θ_j}_U = 0,
\end{align}
where $Δ_{ij}$ is a constant, non-singular matrix. In fact, $Δ$ is an
integer matrix with an integral inverse since the flows of the
hamiltonian vector fields $\pb{}{I_j}$ and $\pb{}{I_j}_U$ are periodic
with unit primitive period. This implies that there are angle variable
$φ_i$ defined implicitly by $θ_i = \sum_{j=1}^m Δ_{ij} φ_j$ for
$i=1,\ldots,m$ and that satisfy $\pb{I_i}{φ_j}_U=δ_{ij}$.

In the sequel, the neighbourhood $π^{-1}(U)$ will be suppressed in
discussions involving action-angle variables.

\subsection{Re-scalings}
\label{sec:rescaling}

Let $σ ≠ 0$ be a non-zero constant. The diffeomorphism
$φ_{σ} : \cotangent M → \cotangent M$ is defined by re-scaling $p$:
$φ_{σ}(q,p) = (q, σ p)$. Let $f_{σ} = f \circ φ_{σ}$ be the
composition of the map $f$ with $φ_{σ}$.

\begin{proposition}
  \label{prop:rescaling}
  If $H : \cotangent M → \R$ is completely integrable with
  first-integral map $F : \cotangent M → \R^m$, then $H_{σ}$ is
  completely integrable with first-integral map $F_{σ}$.

  In addition, if $(I_i, θ_j)$ are action-angle variables for $H$,
  then $(σ^{-1} I_{σ,i}, θ_{σ,j})$ are action-angle variables for
  $H_{σ}$.
\end{proposition}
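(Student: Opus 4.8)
The plan is to verify both assertions by direct computation using the defining relation $\varphi_\sigma(q,p)=(q,\sigma p)$ and the elementary functoriality of Poisson brackets under diffeomorphisms. First I would record the behaviour of the canonical structures under $\varphi_\sigma$: in adapted coordinates $(q_i,p_i)$ one has $\varphi_\sigma^*(q_i)=q_i$ and $\varphi_\sigma^*(p_i)=\sigma p_i$, so $\varphi_\sigma^*\lambda=\sigma\lambda$ and hence $\varphi_\sigma^*\Omega=\sigma\Omega$. Thus $\varphi_\sigma$ is not symplectic but \emph{conformally symplectic}, which is precisely what forces the $\sigma^{-1}$ scaling of the actions. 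From $\varphi_\sigma^*\Omega=\sigma\Omega$ it follows that for any functions $f,g$ one has $\pb{f_\sigma}{g_\sigma}=\sigma^{-1}\,(\pb{f}{g})_\sigma$; I would state this as a one-line lemma since it is the engine of the whole proof.

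For the first assertion, apply this identity to the components of the first-integral map. If $\{F^{(a)},F^{(b)}\}=0$ for all $a,b$, then $\pb{F^{(a)}_\sigma}{F^{(b)}_\sigma}=\sigma^{-1}(\pb{F^{(a)}}{F^{(b)}})_\sigma=0$, so the components of $F_\sigma$ Poisson-commute. Regular values are preserved because $\varphi_\sigma$ is a diffeomorphism, so $F_\sigma=F\circ\varphi_\sigma$ has a regular value wherever $F$ does (the Jacobian of $F_\sigma$ is the Jacobian of $F$ composed with the invertible linear map $d\varphi_\sigma$). Finally $H=h\circ F$ gives $H_\sigma=H\circ\varphi_\sigma=h\circ F_\sigma$, so the same real-analytic $h$ exhibits $H_\sigma$ as completely integrable with first-integral map $F_\sigma$. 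Everything is manifestly real-analytic since $\varphi_\sigma$ is.

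For the second assertion, start from the Poisson relations \eqref{al:poisson-bracket-U} for the action-angle variables $(I_i,\theta_j)$ of $H$, namely $\pb{I_i}{\theta_j}=\delta_{ij}$ (after passing to the $\varphi$-normalized angles as in the text, or carrying the matrix $\Delta$ along harmlessly) and $\pb{I_i}{I_j}=\pb{\theta_i}{\theta_j}=0$. Set $\tilde I_i=\sigma^{-1}I_{\sigma,i}=\sigma^{-1}(I_i\circ\varphi_\sigma)$ and $\tilde\theta_j=\theta_{\sigma,j}=\theta_j\circ\varphi_\sigma$. Using the scaling identity, $\pb{\tilde I_i}{\tilde\theta_j}=\sigma^{-1}\pb{I_{\sigma,i}}{\theta_{\sigma,j}}=\sigma^{-1}\cdot\sigma^{-1}(\pb{I_i}{\theta_j})_\sigma=\sigma^{-2}\cdot\sigma\,\delta_{ij}$; here one must be careful with the bookkeeping—the cleanest route is to note $d\varphi_\sigma$ pulls the symplectic form back to $\sigma\Omega$, so pulling back a Darboux chart for $\Omega$ yields a Darboux chart for $\sigma\Omega$, i.e. $\pb{I_{\sigma,i}}{\theta_{\sigma,j}}_\sigma=\sigma^{-1}\delta_{ij}$, and then the extra factor $\sigma^{-1}$ on $\tilde I_i$ restores $\pb{\tilde I_i}{\tilde\theta_j}=\delta_{ij}$, with the remaining brackets vanishing. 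To see these are genuine action-angle variables for $H_\sigma$, note $H_\sigma=H\circ\varphi_\sigma=(G\circ I)\circ\varphi_\sigma=G\circ(I\circ\varphi_\sigma)$ depends only on $I_\sigma=\sigma\tilde I$, hence only on $\tilde I$; and the angles $\tilde\theta_j$ retain their period because $\varphi_\sigma$ is the identity in the base manifold $M$ and the torus fibres are unchanged as sets, only the symplectic normalization of the conjugate actions has been rescaled.

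The main obstacle—really the only subtle point—is the correct tracking of the two independent factors of $\sigma$: one factor from $\varphi_\sigma^*\Omega=\sigma\Omega$ (which rescales \emph{all} brackets) and one from the explicit $\sigma^{-1}$ premultiplying $I_{\sigma,i}$ in the statement. It is easy to produce an off-by-$\sigma$ error. I would neutralize this by proving once and for all the conformal identity $\varphi_\sigma^*\Omega=\sigma\Omega$ and then deriving the bracket scaling and the action rescaling as formal consequences, rather than re-deriving Poisson brackets coordinate-by-coordinate; the periodicity/primitive-period claim for the rescaled angles is then immediate from the fact that $\varphi_\sigma$ fixes the base and maps Liouville tori to Liouville tori.
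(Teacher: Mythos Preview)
Your approach is exactly the paper's: isolate the single scaling identity for Poisson brackets under $\varphi_\sigma$ and read off both assertions from it. The paper's entire proof is the one-line identity \eqref{eq:rescaling-pb} together with a reference to \eqref{al:poisson-bracket-U}.

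However, you have the scaling factor inverted. From $\varphi_\sigma^*\Omega=\sigma\Omega$ one does \emph{not} get $\{f_\sigma,g_\sigma\}=\sigma^{-1}(\{f,g\})_\sigma$; the correct identity (the paper's \eqref{eq:rescaling-pb}) is
\[
\{f_\sigma,g_\sigma\}=\sigma\,(\{f,g\})_\sigma.
\]
A one-line check: with $f=q_1$, $g=p_1$ one has $f_\sigma=q_1$, $g_\sigma=\sigma p_1$, so $\{f_\sigma,g_\sigma\}=\sigma$ while $(\{f,g\})_\sigma=1$. The conceptual slip is that although the bivector dual to $\sigma\Omega$ is $\sigma^{-1}\pi$, what enters here is the \emph{pushforward} $(\varphi_\sigma)_*\pi$, and since $d\varphi_\sigma$ multiplies $\partial_{p_i}$ by $\sigma$ and fixes $\partial_{q_i}$, one gets $(\varphi_\sigma)_*\pi=\sigma\pi$.

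This inverted factor is precisely what derails your second paragraph: with your $\sigma^{-1}$ you arrive at $\sigma^{-2}\delta_{ij}$, notice it is wrong, insert an unexplained extra $\sigma$, and then retreat to a vaguer argument. With the correct factor the computation is clean and immediate:
\[
\{\tilde I_i,\tilde\theta_j\}=\sigma^{-1}\{I_{\sigma,i},\theta_{\sigma,j}\}=\sigma^{-1}\cdot\sigma\,(\{I_i,\theta_j\})_\sigma=\delta_{ij},
\]
and the vanishing of the remaining brackets follows the same way. So the strategy is right and matches the paper; only the sign of the exponent in your ``engine lemma'' needs fixing, after which the bookkeeping you worried about resolves itself without any further argument.
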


\begin{proof}
  An easy computation shows that if $f, g$ are analytic functions on
  $\cotangent M$ then
  \begin{equation}
    \label{eq:rescaling-pb}
    \pb{f_{σ}}{g_{σ}} = σ \pb{f}{g} \circ φ_{σ}.
  \end{equation}
  This, coupled with \eqref{al:poisson-bracket-U}, proves the
  proposition.
\end{proof}

Let $I = (I_1,\ldots,I_m)$ be a local system of actions for $H$ and
let $J_{σ} = σ^{-1} \, (I_{σ,1},\ldots,I_{σ,m})$ be the actions for
$H_{σ}$. Because $H$ is completely integrable, there is an analytic
function $G$ such that $H = G \circ I$. Then $H_{σ} = G \circ I_{σ}$
and therefore
\begin{equation}
  \label{eq:hlambda}
  H_{σ} = G(σ \, J_{σ}),
\end{equation}
where both $H_{σ}$ and $J_{σ}$ are functions of $(q,p)$.

\begin{proposition}
  \label{prop:hlam-expansion}
  Let $R(θ,I,σ) = H_{σ} - G(σ I)$. Then
  \begin{equation}
    \label{eq:hlam-expansion}
    R = O(σ-1).
  \end{equation}
\end{proposition}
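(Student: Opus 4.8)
The plan is to observe that $R$ is real-analytic jointly in all of its arguments and vanishes identically on the slice $\{σ=1\}$; the estimate $R=O(σ-1)$ is then immediate. First I would fix a toroidal cylinder $π^{-1}(U)$ on which $(θ,I)$ are real-analytic angle-action variables for $H$ with $H=G\circ I$ there, and let $\Psi:\T^m\times B\to π^{-1}(U)$ be the (real-analytic) inverse chart. Reading $H_σ=H\circ φ_σ$ in these coordinates yields the function $(θ,I,σ)\mapsto H\bigl(φ_σ(\Psi(θ,I))\bigr)$; since $H$ is real-analytic on $\cotangent M$, $\Psi$ is real-analytic, and $φ_σ(q,p)=(q,σp)$ depends real-analytically on $σ$, this is real-analytic in $(θ,I,σ)$ for $σ$ in a neighbourhood of $1$. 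As $G(σI)$ is plainly real-analytic in $(I,σ)$, so is $R(θ,I,σ)=H_σ-G(σI)$.

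Next I would evaluate at $σ=1$. Because $φ_1=\mathrm{id}_{\cotangent M}$, one has $H_1=H=G\circ I$, i.e. $H_1(θ,I)=G(I)=G(1\cdot I)$, so $R(θ,I,1)\equiv 0$. Factoring the locally uniformly convergent power series of $R$ in the variable $σ-1$ (equivalently, Taylor's theorem with integral remainder) produces a real-analytic $\tilde R(θ,I,σ)$ with $R=(σ-1)\,\tilde R$. Hence on any compact subset of $\T^m\times B$ and for $σ$ near $1$ there is a constant $C$ with $|R|\le C|σ-1|$, which is the claimed bound.

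There is essentially no obstacle; the only point requiring a moment's care is the \emph{joint} real-analyticity of $H_σ$ in $(θ,I,σ)$, which uses that the angle-action chart is chosen once and for all for $H$ itself — not for $H_σ$ — so that $σ$ enters only through the explicit, analytic re-scaling $φ_σ$. Alternatively, one can argue from \eqref{eq:hlambda}: writing $R=G(σJ_σ)-G(σI)$ and noting $J_1=I\circ φ_1=I$, the real-analytic dependence of $J_σ=σ^{-1}(I_{σ,1},\ldots,I_{σ,m})$ on $σ$ gives $J_σ-I=O(σ-1)$, and the mean value theorem applied to $G$ then gives $R=O(σ-1)$; the direct argument is cleaner. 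If one wishes to extract more, differentiating at $σ=1$ identifies $\tilde R(θ,I,1)=\kp{p}{H_p}-\kp{\D{G}(I)}{I}$, the pointwise discrepancy between the instantaneous temperature and the orbit mean temperature $κ(I)$.
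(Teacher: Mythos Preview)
Your proof is correct, and in fact your ``alternative'' argument via $J_{\sigma}$ is precisely the paper's own proof: the paper Taylor-expands $J_{\sigma}=J_1+O(\sigma-1)=I+O(\sigma-1)$, whence $I_{\sigma}=\sigma J_{\sigma}=\sigma I+O(\sigma-1)$, and then invokes \eqref{eq:hlambda}. Your primary argument---observing directly that $R$ is jointly real-analytic and vanishes on $\{\sigma=1\}$---is a marginally cleaner repackaging of the same idea, since both proofs ultimately rest on the single observation that everything agrees at $\sigma=1$ and depends analytically on $\sigma$.
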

\begin{proof}
  By the Taylor expansion about $σ=1$, $J_{σ} = J_1 + O(σ-1)$, and
  $J_1=I$. Therefore, $I_{σ} = σ J_{σ} = σ I+ O(σ-1)$. The proposition
  follows from \eqref{eq:hlambda}.
\end{proof}

The hamiltonian function $G(σ I)$ is a truncation of $H_{σ}$ that is
simple enough to analyze on a fixed system of action-angle
coordinates. Unfortunately, it is not a good enough integrable
truncation of $H_{σ}$ to be exploited when $H$ is coupled to a single
thermostat via momentum re-scaling.

\begin{proposition}
  \label{prop:hlam-gen-fun}
  Assume the hypotheses of Proposition~\ref{prop:rescaling}. Then, for
  each toroidal cylinder $V ⊂ L$ with angle-action coordinates
  $(θ,I) : V → \T^n × B$, and $σ$ sufficiently close to $1$, there is
  a generating function $Σ = Σ(θ,J_{σ};σ)$ of a symplectic map
  $f_{σ} : V → φ_{σ^{-1}}(V)$ and a real-analytic function
  $\hat{G} = \hat{G}(I;σ)$ such that
  \begin{align}
    \label{eq:hlam-gen-fun}
    (θ_{σ},J_{σ}) &= f_{σ}(θ,I) &&& \textrm{and\ } \\
    \label{eq:hlam-ghat}
    H_{σ} ⎄ f_{σ}(θ,I) &= \hat{G}(I;σ).
  \end{align}
\end{proposition}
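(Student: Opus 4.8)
The plan is to establish Proposition~\ref{prop:hlam-gen-fun} by exhibiting the generating function directly and then verifying it has the required properties. Since $H_\sigma = H \circ \varphi_\sigma$ is completely integrable with first-integral map $F_\sigma$ (by Proposition~\ref{prop:rescaling}), and since $H = G \circ I$ on the toroidal cylinder $V$, we have $H_\sigma = G \circ I_\sigma = G(\sigma J_\sigma)$ from \eqref{eq:hlambda}. The point is that $(\theta, I)$ and $(\theta_\sigma, J_\sigma)$ are \emph{two} systems of action-angle coordinates on overlapping domains, both adapted to the same Lagrangian foliation (the fibres of $F$, equivalently of $F_\sigma$, are the same tori as point sets up to the diffeomorphism $\varphi_{\sigma^{-1}}$). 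So the transition between them is a symplectic map fixing the foliation, which is exactly the kind of map that admits a generating function of the form $\Sigma(\theta, J_\sigma; \sigma)$ — old angles, new actions.

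**Key steps.** First I would note that both $(\theta,I)$ and $(\theta_\sigma, J_\sigma)$ pull the Liouville form $\lambda$ back (up to closed forms) to $\sum I_i \, \D{\theta}_i$ and $\sum J_{\sigma,i}\,\D{\theta_{\sigma,i}}$ respectively, per the discussion following \eqref{al:poisson-bracket}. Hence $\sum I_i\,\D{\theta}_i - \sum J_{\sigma,i}\,\D{\theta_{\sigma,i}}$ is closed on $V$; writing it as $-\D{}$ of a function (locally, or after passing to the universal cover in the angles and checking the periods integrate out — this is where one uses that the actions are genuine actions so the period lattices match up) gives a function $S$. Re-expressed in the mixed coordinates $(\theta, J_\sigma)$ — which are valid coordinates for $\sigma$ near $1$ because at $\sigma = 1$ the map $f_\sigma$ is the identity and $\partial(\theta_\sigma, J_\sigma)/\partial(\theta, I)$ is close to the identity, so $\partial J_\sigma/\partial I$ is invertible — this $S$ becomes the desired generating function $\Sigma(\theta, J_\sigma; \sigma)$, with $I = \partial \Sigma/\partial\theta$ and $\theta_\sigma = \partial\Sigma/\partial J_\sigma$. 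Real-analyticity in all arguments, including $\sigma$, follows because $\varphi_\sigma$ and hence $F_\sigma$, the action map $J_\sigma$, and the coordinate change all depend analytically on $\sigma$. Finally, $\widehat{G}(I;\sigma)$ is defined simply as $H_\sigma$ expressed in the $(\theta, I)$ coordinates: one checks it is independent of $\theta$ because $H_\sigma = G(\sigma J_\sigma)$ depends only on $J_\sigma$, and $J_\sigma$ is a function of $I$ alone along the generating-function correspondence (the relation $I = \partial_\theta \Sigma$ together with $\theta_\sigma = \partial_{J_\sigma}\Sigma$ shows $J_\sigma$ and $I$ are related by a $\theta$-independent diffeomorphism since the foliation is preserved).

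**The main obstacle.** The technical heart is checking that the closed $1$-form $\sum I_i\,\D{\theta}_i - \sum J_{\sigma,i}\,\D{\theta_{\sigma,i}}$ is actually exact on the toroidal cylinder $V \cong \T^n \times B$ — a priori it could have nonzero periods around the $n$ generating cycles of $\T^n$. The resolution is that $I$ and $J_\sigma$ are \emph{action} variables, meaning $I_i = \frac{1}{2\pi}\oint_{\gamma_i}\lambda$ and similarly for $J_\sigma$ relative to the \emph{same} homology basis (the momentum re-scaling $\varphi_\sigma$ is fibre-preserving, hence acts trivially on the homology of the tori), so the periods of the difference form vanish identically. Once exactness is secured, everything else is a standard generating-function computation plus bookkeeping of analytic dependence on $\sigma$; I would keep that bookkeeping light, citing Proposition~\ref{prop:rescaling} and the implicit function theorem for the invertibility of $\partial J_\sigma/\partial I$ near $\sigma = 1$.
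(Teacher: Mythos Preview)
Your proposal is correct and follows essentially the same approach as the paper's (three-sentence) proof: both observe that $(\theta,I)$ and $(\theta_\sigma,J_\sigma)$ are Darboux charts, so the transition $f_\sigma$ is symplectic and close to the identity for $\sigma$ near $1$, hence admits a generating function $\Sigma$; then $H_\sigma\circ f_\sigma$ depends only on $I$ because in the target chart $H_\sigma=G(\sigma J_\sigma)$ depends only on the action coordinate. Your period/exactness discussion and the remark that $f_\sigma$ intertwines the two Lagrangian foliations simply unpack what the paper compresses into ``such changes of angle-action variables are induced by a generating function.''
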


\begin{proof}
  Define $f_{σ}$ by \eqref{eq:hlam-gen-fun}, where $σ$ is close enough
  to $1$ so that $φ_{σ^{-1}}(V) ⊂ L$. The map $f_{σ}$ is symplectic
  because it is a canonical change of coordinates from one set of
  angle-action variables $(θ,I)$ to another $(θ_{σ},J_{σ})$. Moreover,
  such changes of angle-action variables are induced by a generating
  function, hence the existence of $Σ$. Finally, by
  Proposition~\ref{prop:rescaling} and \eqref{eq:hlambda}, one obtains
  that the pullback of $H_{σ}$ by $f_{σ}$ is a real-analytic function
  of $I$ that is parameterized by $σ$. Hence~\eqref{eq:hlam-ghat}
  holds.
\end{proof}

\begin{remark}
  \label{rem:other-rescalings}
  The diffeomorphism $φ_{σ}$ effects a momentum re-scaling, which is
  used in single thermostats following Nos{é}'s lead. On the other
  hand, Andersen's barostat re-scales both momentum $p$ and spatial
  coordinates $q$, as does the classical version of Grilli \&
  Tosatti's thermostat. This is perfectly consistent with the
  framework introduced here for momentum re-scaling, provided that the
  configuration manifold $M$ is invariant under such a re-scaling of
  spatial coordinates (e.g. $M$ is the complement of a finite number
  of hyperplanes in $\R^m$). Since the spatial and momentum re-scalings
  commute and \eqref{eq:rescaling-pb} holds for each separately, it
  holds for the composition, too. That is, if $α,β$ are non-zero
  scalars and $σ=(α,β)$, $n(σ)=α β$, then define the re-scaling
  diffeomorphism $φ_{σ}(q,p) = (α q, β p)$ of $\cotangent M$. Then,
  \eqref{eq:rescaling-pb} becomes
  \begin{equation}
    \label{eq:rescaling-pbab}
    \pb{f_{σ}}{g_{σ}} = n(σ) \pb{f}{g} \circ φ_{σ}
  \end{equation}
  where $f_{σ}=f \circ φ_{σ}$, etc.. Propositions
  \ref{prop:rescaling}---\ref{prop:hlam-gen-fun} hold with the
  appropriate replacement of the scalar $σ$ by $σ=(α,β)$.
\end{remark}

\section{Single Thermostats}
\label{sec:single-thermostats}

In~\cite{1806.10198v3}, the following definition is introduced.

\begin{definition}
  \label{def:order-2-thermostat}
  A $C^r$, $r>2$, function $N_T(s,S) = Ω(s) F(S) + T \ln s$ is a
  \defn{variable-mass thermostat of order $2$} if $Ω>0$,
  $F(0)=F'(0)=0$, $F''(0)>0$ and $F'$ vanishes only at $0$.
\end{definition}

In the present paper, we will always assume that $r=ω$, i.e. the
thermostat is real analytic. As noted in~\cite{1806.10198v3}, {\em
  order} means the order of the first non-trivial term in the
Maclaurin expansion of $F$ and the thermostat {\em mass} is
$1/Ω(s)$. When $Ω$ is constant, the thermostat is called
\defn{elementary}. Higher-order thermostats are in the literature, but
they present serious challenges for the techniques of this paper. The
following are examples of real-analytic thermostats of order $2$:

\begin{enumerate}
\item the Nos{é}-Hoover thermostat~\cite{doi:10.1080/00268978400101201,nose,hoover};
\item Tapias, Bravetti \& Sanders logistic
  thermostat~\cite{10.12921/cmst.2016.0000061,10.12921/cmst.2017.0000005};
\item\label{it:winkler} Winkler's thermostat and its generalization~\cite{proquest1859245445}.
\end{enumerate}

A note about Winkler's thermostat: it appears in the form of
\eqref{eq:nose} but with $p s^{-1}$ replaced by $p s^{-2}$ and more
generally one may use the coupling $p s^{-\e}$ for any $\e > 0$. By a
change of variables, this is equivalent to a thermostat with the
standard coupling $p s^{-1}$ and controller
$N_T(s,S) = \frac{a}{2} (\e s^{1-1/\e} S)^2 + T \ln s$. Such a
thermostat is called a \defn{generalized Winkler thermostat}.

One can extend the definition of a variable-mass thermostat of order
$2$ to include real-analytic hamiltonians of the form
\begin{equation}
  \label{eq:order-2-thermostat-generalized}
  N_T(s,S) = T \ln s + \underbrace{\sum\limits_{k=2}^{∞} \dfrac{1}{k!} Ω_k(s) S^k }_{F(s,S)},
\end{equation}
where each $Ω_k$ is real-analytic on $\R^+$, $F$ is non-negative,
$F_S = 0$ only along $\R^+ × \set{0}$ and $F_{SS} > 0$ along
$\R^+ × \set{0}$. These assumptions imply, in particular, that
$Ω_2(s) > 0$ for all $s$.

\begin{definition}
  \label{def:order-2-thermostat-generalized}
  A real-analytic function $N_T$ satisfying the properties in the previous paragraph
  is called a \defn{generalized variable-mass thermostat of order $2$}.
\end{definition}

One might be curious as to why the form of the potential is always
$T \ln s$ in the above definitions. In the non-hamiltonian setting,
Jellinek, Jellinek \& Berry and
Ramshaw~\cite{JellinekJulius1988Dfns,scopus2-s2.0-0001730129,scopus2-s2.0-4243944612,PhysRevE.92.052138}
look at a general form for Nos{é}-Hoover-like thermostats and none of
these authors make an equivalent assumption. However, those papers do
not attempt to derive a hamiltonian form for their non-hamiltonian
thermostats and do not address the problem of a canonical form, which
is a non-trivial problem since they are parameterized by several
functional parameters. On the other hand, under fairly modest
hypotheses--the thermostat acts via momentum re-scaling and the
thermostatic equilibrium set projects to
$\set{(s,S) \mid S=0, s>0}$--then it is known that, up to a symplectic
change of variables in $(s,S)$, the potential can be taken to be
$T \ln s$~(\cite[section 5.1]{nlin2017}, see also~\cite[part
C]{nose}). The cost, though, is that the function $F$ takes a quite
general form as in~\eqref{eq:order-2-thermostat-generalized}.

\section{Singly-Thermostated, Integrable Hamiltonians}
\label{sec:sing-therm-int-ham}

Let $N_T$ be a real-analytic, generalized variable-mass thermostat of
order $2$ and $H$ be a real-analytically completely integrable
hamiltonian.

\subsection{An integrable truncation}
\label{sec:int-trunc}

An integrable truncation of $𝐇$, denoted by $\mean{𝐇}_0$ and the remainder
$𝐑 = 𝐇 - \mean{𝐇}_0$, are given by
\begin{align}
  \label{eq:int-trunc-Hb}
  \mean{𝐇}_0(I,s,S) &= G(I/s) + N_T(s,S), &&& 𝐑(θ,I,s,S) &= R(1-1/s,θ,I)
\end{align}
where $σ = 1/s$ in \eqref{eq:hlambda} and $R$ is defined in
Proposition~\ref{prop:hlam-expansion}.

\begin{proposition}
  \label{prop:int-trunc}
  The hamiltonian function $\mean{𝐇}_0$ is completely integrable.
\end{proposition}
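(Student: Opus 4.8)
The plan is to read off complete integrability directly from Definition~\ref{def:complete-integrability} by exhibiting $n+1$ Poisson-commuting first integrals of $\mean{𝐇}_0$ that admit a regular value; the phase space here is $2n+2$-dimensional, carrying coordinates $(\theta,I,s,S)$ with $\pb{I_i}{\theta_j}=\delta_{ij}$, $\pb{s}{S}=1$ and all remaining brackets among these coordinates vanishing. The natural candidates are the $n$ action variables $I_1,\dots,I_n$ together with the Hamiltonian $\mean{𝐇}_0$ itself; equivalently one may take the pulled-back components $f_1,\dots,f_n$ of a first-integral map for $H$, which are globally real-analytic on $\cotangent M$ and independent of $(s,S)$. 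I will phrase the computation with the $I_i$, since $\mean{𝐇}_0$ is written in those coordinates, the two choices being related by a local diffeomorphism of the action base.

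First I would check pairwise Poisson-commutativity. One has $\pb{I_i}{I_j}=0$ by the definition of action-angle variables. To compute $\pb{I_i}{\mean{𝐇}_0}$ with $\mean{𝐇}_0=G(I/s)+N_T(s,S)$, I would split the bracket across the two symplectic factors: the term $N_T(s,S)$ depends only on $(s,S)$ and $I_i$ only on $(\theta,I)$, so $\pb{I_i}{N_T(s,S)}=0$; and $\pb{I_i}{G(I/s)}=0$ because $G(I/s)$ is a function of $I_1/s,\dots,I_n/s$ alone, each of which has zero bracket with $I_i$ (using $\pb{I_i}{I_k}=0$ and $\pb{I_i}{s}=0$). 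Hence $\pb{I_i}{\mean{𝐇}_0}=0$, while $\pb{\mean{𝐇}_0}{\mean{𝐇}_0}=0$ trivially.

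Next I would verify that the map $F=(I_1,\dots,I_n,\mean{𝐇}_0)$ has a regular value, i.e.\ that its $n+1$ component differentials are linearly independent on a non-empty open set. The coordinate differentials $dI_1,\dots,dI_n$ are everywhere independent and span a subspace containing no $dS$-component. On the other hand the $dS$-coefficient of $d\mean{𝐇}_0$ is $\partial_S N_T=F_S(s,S)$, which, by the defining property of a generalized variable-mass thermostat of order $2$, vanishes only on $\R^+\times\set{0}$. Thus on the open, dense set $\set{S\neq 0}$ the differential $d\mean{𝐇}_0$ does not lie in the span of $dI_1,\dots,dI_n$, so all $n+1$ differentials are independent there. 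Since moreover $\mean{𝐇}_0=h\circ F$ with $h$ the (real-analytic) projection of $\R^{n+1}$ onto its last coordinate, the three requirements of Definition~\ref{def:complete-integrability} are met and $\mean{𝐇}_0$ is completely integrable.

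There is no deep obstacle; the point that needs care is the choice of integrals. The momentum re-scaling $p\mapsto p/s$ turns the integrals of $H$ into the $s$-dependent functions $f_i(q,p/s)$, and these do \emph{not} Poisson-commute with $N_T(s,S)$ — the bracket picks up $\partial_s\!\left[f_i(q,p/s)\right]F_S(s,S)$ — so they are unavailable here. The definition of $\mean{𝐇}_0$ evaluates $G$ at the \emph{un-rescaled} actions $I/s$ precisely so that the $s$-independent functions $I_i$ survive as integrals. A secondary, routine point is to record that $\mean{𝐇}_0$ is genuinely real-analytic (hence $F$ a real-analytic map): this follows from real-analyticity of $G$, of the action variables on a toroidal cylinder, of $s\mapsto 1/s$ on $\R^+$, and of $N_T$.
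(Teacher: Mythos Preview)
Your proposal is correct and follows the same approach as the paper: the paper's proof is a single sentence noting that the $n$ components of $I$ together with $\mean{𝐇}_0$ itself are functionally independent, Poisson-commuting first integrals of the $(n{+}1)$-degree-of-freedom Hamiltonian $\mean{𝐇}_0$. You supply the explicit verifications (bracket computations and the regularity argument via $\partial_S N_T\neq 0$ on $\{S\neq 0\}$) that the paper leaves implicit, and your closing remark on why the rescaled integrals $f_i(q,p/s)$ fail is a helpful aside but not needed for the proof.
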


The proof follows by virtue of the fact that the $n$ components of $I$
and $\mean{𝐇}_0$ are functionally independent, Poisson-commuting first
integrals of the $n+1$-degree of freedom hamiltonian $\mean{𝐇}_0$. The
proposition justifies the description of $\mean{𝐇}_0$ as an
\defn{integrable truncation} of $𝐇$. In the ideal case, it would
suffice to use $\mean{𝐇}_0$ to study the normal form of $𝐇$ in a
neighbourhood of a $\mean{𝐇}_0$-invariant torus. As it turns out, this
does work when $G$ is homogeneous in $I$, but otherwise, one needs a
finer truncation.

\subsection{The temperature function and its Birkhoff average}
\label{sec:temp-fun}

Let us define a phenomenological temperature function for the
hamiltonian $H$.

\begin{definition}
  \label{def:temph}
  Let $H : \cotangent M → \R$ be a real-analytic hamiltonian. For each
  $(q,p) ∈ \cotangent M$, and non-negative integer $n$, define
  \begin{equation}
    \label{eq:temph}
    \tempH{n}{H}(q,p) = \left.\didin{n}{\phantom{σ}}{σ}\right|_{σ=1} H(q,σ p).
  \end{equation}
  The function $\tempH{1}{H}(q,p)$, which equals $\kp{p}{H_2(q,p)}$,
  defined to be the \defn{instantaneous temperature} of the system
  with state $x=(q,p)$.
\end{definition}

\begin{remark}
  \label{rem:temph}
  The functions $\tempH{n}{H}$, for $n ≥ 1$, define a type of moment
  of momentum, while the ratios $\tempH{n+2}{H}/\tempH{n}{H}$, define
  a type of ``weighted temperature''. In $1$-degree of freedom, the
  moments are used in the thermostats of Watanabe \&
  Kobayashi~\cite{PhysRevE.75.040102} and in thermostats that target
  higher moments of
  temperature~\cite{sciversesciencedirect_elsevier0375-9601.95.00973-6,10.1016/j.cnsns.2015.08.02,doi:10.1016/j.physleta.2015.08.034}.

  The function $\tempH{n}{H}(q,p)$ is related to the $n$-th fibre
  derivative of $H$, $\nthderivative{n}{H}(q,p)$, via the identity
  \begin{equation}
    \label{eq:temph-nthder}
    \tempH{n}{H}(q,p) = \kp{ p^{(n)} }{ \nthderivative{n}{H}(q,p) }
  \end{equation}
  where
  $p^{(n)} = \underbrace{p \otimes \cdots \otimes p}_{n\
    \mathrm{times}}$ is the $n$-fold symmetric tensor product of $p$
  with itself.
\end{remark}

Let $φ^t$ be the complete flow of the hamiltonian $H$ and let $f :
\cotangent M → \R$ be a continuous function. The Birkhoff
average of $f$, denoted by $\mean{f} $, is
\begin{equation}
  \label{eq:mean-f}
   \mean{f} (q,p) = \lim_{T → ∞} \dfrac{1}{2T} \int_{-T}^T f(φ^t(q,p))\, \D{t}.
\end{equation}

\begin{definition}
  \label{def:mean-temph}
  The \defn{orbit mean temperature} is the function $κ(q,p)$ that is
  the Birkhoff average of $\tempH{1}{H}$; more generally, the
  \defn{orbit mean moment of momentum} is the function $κ_n(q,p)$ that
  is the Birkhoff average of $\tempH{n}{H}$ for $n ≥ 1$.
\end{definition}

\begin{proposition}
  \label{prop:mean-temph}
  Let $H$ be completely integrable. If $(q,p) ∈ L$ (the union of
  regular Liouville tori), then
  \begin{equation}
    \label{eq:mean-temph}
    κ = \kp{I}{\D{G}(I)} = \left. \didi[{\phantom{σ}}]{σ} \right|_{σ=1} G(σ I),
  \end{equation}
  where $H = G \circ I$.
\end{proposition}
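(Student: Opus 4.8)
The plan is to identify the instantaneous temperature $\tempH{1}{H}(q,p)=\kp{p}{H_p}$ with the pairing $λ(X_H)$ of the canonical Liouville $1$-form $λ$ of $\cotangent M$ against the Hamiltonian vector field $X_H$, and then to show that, restricted to a regular Liouville torus, this function differs from the constant $\kp{I}{\D{G}(I)}$ by a total time-derivative of a function that is bounded on that torus; since the Birkhoff average of such a coboundary vanishes, \eqref{eq:mean-temph} follows at once.

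First I would record the pointwise identity $\tempH{1}{H}=λ(X_H)$: in Darboux coordinates $(q,p)$ one has $λ=\sum_i p_i\,\D{q}_i$, and the $q_i$-component of $X_H$ is $\dot q_i=H_{p_i}$, so $λ(X_H)=\sum_i p_i H_{p_i}=\kp{p}{H_p}$, which is $\tempH{1}{H}$ by definition~\ref{def:temph}. Now fix $(q,p)\in L$, a toroidal cylinder $V\ni(q,p)$ with angle-action coordinates $(θ,I)$, and let $L_b=\set{I=b}$ be the Liouville torus through $(q,p)$, $b=I(q,p)$. Since $L_b$ is Lagrangian, $λ|_{L_b}$ is closed; let $γ_1,\dots,γ_n$ be the basic cycles of $L_b\cong\T^n$, where $γ_j$ is the loop on which $θ_j$ increases by one and the other angles are fixed. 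A closed $1$-form on $L_b$ is exact exactly when all its periods $\oint_{γ_j}$ vanish, so it suffices to compare periods: $\oint_{γ_j}λ=I_j=b_j$ is precisely the normalisation that defines the action variables (Theorem~\ref{thm:louville-arnold} and the ensuing discussion, in which $π^*(I_j)$ generates a flow of unit primitive period), whereas $\oint_{γ_j}\sum_i b_i\,\D{θ}_i=b_j$ because the $b_i$ are constant on $L_b$ and $\oint_{γ_j}\D{θ}_i=δ_{ij}$. Hence $λ|_{L_b}-\sum_i b_i\,\D{θ}_i|_{L_b}=\D{W}$ for a single-valued, real-analytic function $W:L_b→\R$.

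Because $H|_V=G\circ I$ does not depend on the angles, $φ^t$ preserves $I$, every $L_b$ is invariant, and on $L_b$ the flow is the linear flow with frequency vector $\D{G}(b)$; in particular $X_H|_{L_b}$ is the constant vector field whose angle-components are the frequencies $(\partial G/\partial I_i)(b)$. Pairing the decomposition of $λ|_{L_b}$ with $X_H$ gives, along any orbit contained in $L_b$,
$$
  \tempH{1}{H}=λ(X_H)=\kp{b}{\D{G}(b)}+(\D{W})(X_H)=\kp{b}{\D{G}(b)}+\ddt{t}\bigl(W\circ φ^t\bigr).
$$
Feeding this into \eqref{eq:mean-f}: the first term is constant along the orbit, so it equals its own Birkhoff average, while the second contributes
$$
  \lim_{T→∞}\frac{1}{2T}\int_{-T}^{T}\ddt{t}\bigl(W\circ φ^t\bigr)\,\D{t}=\lim_{T→∞}\frac{W(φ^{T}(q,p))-W(φ^{-T}(q,p))}{2T}=0,
$$
since $W$ is bounded on the compact set $L_b$, which contains the whole orbit. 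Therefore $κ(q,p)=\kp{b}{\D{G}(b)}=\kp{I}{\D{G}(I)}$, and the last equality in \eqref{eq:mean-temph} is the chain rule $\left.\didi[{\phantom{σ}}]{σ}\right|_{σ=1}G(σ I)=\kp{I}{\D{G}(I)}$. The only delicate step is the cohomological claim that makes $W$ single-valued — i.e.\ that the Liouville form and the ``angle form'' $\sum_i b_i\,\D{θ}_i$ have matching periods on each $L_b$; the remaining manipulations are bookkeeping. I would also note that this argument shows the limit in \eqref{eq:mean-f} exists at \emph{every} point of $L$, not just almost everywhere, including on resonant tori.
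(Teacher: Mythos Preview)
Your proof is correct and follows essentially the same route as the paper's: both identify the instantaneous temperature with $λ(X_H)=p\cdot\dot q$, use that the Liouville form equals $\sum I_i\,\D θ_i$ plus an exact piece, and observe that the exact piece contributes a bounded coboundary whose Birkhoff average vanishes. The only cosmetic difference is that the paper asserts $\sum p_i\,\D q_i=\sum I_i\,\D θ_i+\D S$ on a full toroidal neighbourhood (invoking the exact-symplectic nature of the action-angle transformation), whereas you restrict to a single torus and verify the period condition explicitly to produce $W$; both lead to the same telescoping estimate.
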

\begin{proof}
  Let $γ(t) = (q(t),p(t)) = φ^t(q,p)$ be the integral curve through
  the point $(q,p) ∈ L$. Then, the closure of $γ(\R)$ is contained in
  a Lagrangian torus and so there are angle-action coordinates $(θ,I)$
  defined in a neighbourhood $U$ of the orbit $γ(\R)$, as described in
  the Liouville--Arnol'd--Duistermaat
  theorem~\ref{thm:louville-arnold}. Then, since
  $\sum_{i=1}^n \D{I}_i ∧ \D{θ}_i = \sum_{i=1}^n \D{p}_i ∧ \D{θ}_i$,
  there is an analytic function $S : U → \R$ such that $\D{S} +
  \sum_{i=1}^n I_i \D{θ}_i = \sum_{i=1}^n p_i \D{q}_i$.

  If we define $γ_T = γ([-T,T])$ to be a centred orbit segment and
  $κ_T$ to be the mean instantaneous temperature over this segment,
  then
  \begin{align}
    \label{eq:temph-bk}
    2 T κ_T(q,p) &= \int_{-T}^T \kp{p(t)}{H_2(q(t),p(t))}\, \D t = \int_{-T}^T \kp{p(t)}{\dot{q}(t)}\, \D t, \\\notag
                 &= \int_{γ_T} p · \D{q} = \int_{γ_T} I · \D{θ} + S(γ(T)) - S(γ(-T)),\\\notag
                 &= \int_{-T}^T \kp{ω(I)}{I}\, \D{t} + S(γ(T)) - S(γ(-T)), &&& \mathrm{since\ } \dot{θ} = ω(I), \\\notag
                 &= 2T \kp{ω(I)}{I}  + S(γ(T)) - S(γ(-T)),
  \end{align}
  where $ω(I) = \D{G}(I)$. Since $S$ is continuous on $U$, it is
  bounded on a neighbourhood of $γ(\R)$, and $κ = \lim_{T → ∞} κ_T$,
  so the result follows.
\end{proof}

\begin{remark}
  \label{rem:invariance-of-orbit-mean-temp}
  Let $U,V ⊂ \cotangent M$ be open sets; a map $f : U → V$ is
  \defn{exact symplectic} if the pull-back of the tautological
  Liouville 1-form $λ | V$ under $f$ equals $λ | U$ up to the
  derivative of a scalar function. That is, $f$ is exact symplectic if
  $f^*(λ|V) = (λ + \D{S})|U$ for some $S : U → \R$. The preceding
  proof makes use of a single property of the transformation to
  action-angle coordinates: it is \defn{exact symplectic}. Indeed, the
  following is true:

  {\it If $f : U → V$ is exact symplectic, and $κ=κ_{H}$ is the
    orbit mean temperature function of the Hamiltonian $H$ on $V$, then the
    orbit mean temperature function $κ_{H ⎄ f}$ of the Hamiltonian $H
    ⎄ f$ on $U$ equals $κ ⎄ f$.}

  In short, the orbit mean temperature function is an invariant of
  exact symplectic changes of coordinates.
\end{remark}

\begin{remark}
  \label{rem:orbit-mean-space-mean}
  The Birkhoff average is used because it provides a very natural proof
  of proposition~\ref{prop:mean-temph}. On the other hand, the
  regularity of the function $\mean{f} $ is generally quite low (just
  $L^1$) even when $f$ is real analytic. The low regularity stems from
  the denseness of the ``resonant tori'' where the frequency vector
  $ω(I)$ enjoys non-trivial integral linear relations:
  $\exists k ∈ \Z-\set{0}$ such that $\kp{k}{ω(I)} = 0$. These
  resonances mean the functions $\exp(i \kp{k}{θ})$ do not average to
  $0$. On the other hand, the set of ``non-resonant tori'' is of full
  measure, and on each such torus, the Birkhoff average converges to
  the mean value over the torus.

  It follows that the Birkhoff average $\mean{f} $, of a real-analytic
  function $f$, is an equivalence class of integrable functions that
  contains a real-analytic representative, namely the fibre average of
  $f$. This latter is the average of $f$ on each Lagrangian torus, the
  average being computed with respect to the unique Haar measure on
  that torus.

  In the sequel, we prefer to identify $\mean{f}$ with this
  real-analytic fibre average.
\end{remark}

\subsection{Thermostatic equilibria and iso-thermal non-degeneracy}
\label{sec:therm-eq}

\begin{definition}
  \label{def:thermo-eq}
  Fix $T>0$ and let $κ$ be the orbit mean temperature
  function~\defref{def:mean-temph}. The set
  \begin{equation}
    \label{eq:thermo-eq}
    \thermostateqm{T} = \set{ (θ,I,s,S) \mid κ(I/s)=T,\, s>0,\, S=0 }
  \end{equation}
  is the set of thermostatic equilibria at temperature $T$ for the
  integrable truncated hamiltonian $\mean{𝐇}_0$~\eqref{eq:int-trunc-Hb}.
\end{definition}

\begin{proposition}
  \label{prop:thermo-eq}
  Assume that $T$ is a regular value of $κ$. Then, the thermostatic
  equilibrium set $\thermostateqm{T}$ is an invariant, real-analytic
  submanifold for the hamiltonian flow of $\mean{𝐇}_0$.
\end{proposition}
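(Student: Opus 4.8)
The plan is to verify the two assertions—invariance and real-analyticity of $\thermostateqm{T}$ as a submanifold—separately, and each will follow from structural features already established. First I would address the manifold structure. By hypothesis $T$ is a regular value of $\kappa$, so the level set $\set{J \mid \kappa(J) = T}$ is a real-analytic hypersurface in the action space $B$; here $J$ plays the role of the re-scaled action $I/s$. The defining equations of $\thermostateqm{T}$ are $\kappa(I/s) = T$, $s > 0$, $S = 0$. Since the map $(\theta,I,s,S) \mapsto (I/s, S)$ is a real-analytic submersion onto its image on the open region $\set{s>0}$ (the Jacobian in $I$ and $S$ alone is already surjective), the preimage of the real-analytic submanifold $\set{\kappa = T} \times \set{S=0}$ is a real-analytic submanifold of the appropriate codimension $n+1$. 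So the manifold assertion reduces to transversality, which is exactly what regularity of $T$ buys us.

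For invariance, I would compute the flow of $\mean{𝐇}_0$ directly on $\thermostateqm{T}$ and show the defining functions $\kappa(I/s) - T$ and $S$ are preserved. From \eqref{eq:int-trunc-Hb}, $\mean{𝐇}_0(I,s,S) = G(I/s) + N_T(s,S)$, and Hamilton's equations in the coordinates $(\theta, I, s, S)$ give: $\dot{I} = -\partial_\theta \mean{𝐇}_0 = 0$ (since $\mean{𝐇}_0$ is $\theta$-independent), $\dot{s} = \partial_S N_T(s,S)$, and $\dot{S} = -\partial_s \mean{𝐇}_0 = -\partial_s\bigl(G(I/s)\bigr) - \partial_s N_T(s,S)$. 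On the set $S = 0$: by the defining properties of a generalized variable-mass thermostat of order $2$, $F_S$ vanishes along $\R^+ \times \set{0}$, so $\partial_S N_T(s,0) = F_S(s,0) = 0$, hence $\dot{s} = 0$ there; and since $I$ is also constant, the quantity $I/s$ is constant along the flow, so $\kappa(I/s)$ is constant—in particular it stays equal to $T$. It remains to check $\dot{S} = 0$ on $\thermostateqm{T}$. We have $\partial_s N_T(s,0) = T/s + F_s(s,0) = T/s$ because $F(s,0) \equiv 0$ forces $F_s(s,0) \equiv 0$. And $\partial_s\bigl(G(I/s)\bigr) = -\kp{\D G(I/s)}{I}/s^2 = -\kappa(I/s)/s^2 \cdot$ — here I invoke Proposition~\ref{prop:mean-temph}, which identifies $\kappa(J) = \kp{J}{\D G(J)}$; so $\partial_s\bigl(G(I/s)\bigr) = -\kappa(I/s)/s$. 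Therefore $\dot{S} = \kappa(I/s)/s - T/s = (\kappa(I/s) - T)/s$, which vanishes precisely on $\thermostateqm{T}$. Hence the vector field is tangent to $\thermostateqm{T}$, giving invariance.

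The main point to get right—and the only place a subtlety lurks—is the interplay between the thermostat's structural hypotheses and the chain-rule computation of $\partial_s\bigl(G(I/s)\bigr)$: one must use that $\kappa$ as a function of the re-scaled action equals $\kp{\cdot}{\D G}$, which is precisely Proposition~\ref{prop:mean-temph}, and one must use both $F_S(s,0) = 0$ (to freeze $s$) and $F(s,0) \equiv 0$ (to evaluate $\partial_s N_T$). With those in hand the computation is short. I would close by remarking that the same computation shows $\thermostateqm{T}$ is foliated by the $n$-tori $\set{I = \text{const}, s = \text{const}, S = 0}$ carried by the residual $\theta$-flow $\dot\theta = \D G(I/s)/s$, which is the geometric picture needed in the sequel; but the bare statement only requires invariance and the submanifold property established above.
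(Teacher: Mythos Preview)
Your proof is correct and follows essentially the same approach as the paper, which simply invokes the implicit function theorem to write $s=s_o(I)$ locally and leaves the invariance verification implicit; your explicit computation of $\dot s$ and $\dot S$ on $\thermostateqm{T}$ is in fact more complete than what the paper records. One slip: the codimension of $\thermostateqm{T}$ is $2$, not $n+1$---the defining conditions are the single scalar equation $\kappa(I/s)=T$ together with $S=0$, so the preimage under your submersion of the codimension-$2$ set $\{\kappa=T\}\times\{0\}\subset\R^n\times\R$ has codimension $2$ in the $(2n{+}2)$-dimensional extended phase space.
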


The proof of this proposition is a straightforward application of the
implicit function theorem to prove that there are an open neighbourhoods $W$
of $κ^{-1}(T)$ and $1 ∈ J$ such that $(I,s) ∈ W × J$ and $κ(I/s) = T$
iff $s=s_o(I)$.

\begin{remark}
  \label{rem:thermo-eq}
  The closure of $\thermostateqm{T} ⊂ \cotangent M$ is, in general, an
  intractable object. However, in case the Liouville foliation of the
  integrable hamiltonian $H$ is well-behaved (such as when the first
  integral map $F$ is
  \defn{non-degenerate}~\cite{MR1036125,MR1104922,MR1047477}), then
  $κ$ extends continuously to the closure of $\thermostateqm{T}$. In
  this situation, it is useful to regard the
  $\closure{\thermostateqm{T}}$ as the set of thermostatic
  equilibria. The utility of this point of view is highlighted by
  section~\ref{sec:llm-ex} where a periodic orbit $Γ$ in
  $\closure{\thermostateqm{T}} ∖ \thermostateqm{T}$ is used to compute
  the normal form of $𝐇$ in a neighbourhood of $Γ$ in order to apply
  KAM theory.
\end{remark}

\subsection{A normal form}
\label{sec:normal-form}

Let $\cotangent(\T^n × \R^+)$ be the cotangent bundle of $\T^n × \R^+$
and let $X_0 ⊂ \cotangent(\T^n × \R^+)$ be the zero section of that
bundle and, for $0<c<1$, let $X_c ⊂ X_0$ be the privileged subset
$$X_c = \set{ (θ,I,s,S) \mid Θ ∈ \T^n, s ∈ (c,1/c), I=0, S=0}.$$

\begin{lemma}
  \label{lem:normal-form-near-T}
  Let $𝐇 $ be the Nos{é}-thermostated
  hamiltonian~\eqref{eq:nose} and let
  $\thermostateqm{T}$ be the thermostatic equilibrium
  set~\eqref{eq:thermo-eq}. Then, there are neighbourhoods
  $W ⊃ \thermostateqm{T}$, $X ⊃ X_c$, $X ⊂ \cotangent(\T^n × \R)$,
  a submanifold $\thermostateqm{\hat{T}} ⊂ W$ that is a graph over
  $\thermostateqm{T}$ and a real-analytic symplectomorphism $φ$ such
  that
  \[
    \xymatrix{(θ_1,I_1,s,S) = φ(θ,I,v,V), &  & X \ar@{->}[r]^{φ}          & W \\
                                          &  & X_c \ar@{->}[r]^{φ} \ar[u] & \thermostateqm{\hat{T}} \ar[u]
    }
  \]
  and
  \begin{align}
    \label{eq:normal-form-near-T-h}
    𝐇 ⎄ φ & = \overbrace{\hat{G}(I;1/s_o(I)) + T \ln s_o(I) + ½ ε α(I) \left( v ² + V ² \right)}^{\mean{𝐇}  ⎄ φ} + ε^{\frac{3}{2}} 𝐏_{ε} ⎄ φ.
  \end{align}
  where $\hat{G}(I;σ)$ is the pullback of $H_{σ}$ in action-angle
  coordinates, $α$ is defined in \eqref{eq:gamma-alpha}, $ε=√ a$,
  $s_o$ is defined in~\eqref{eq:hats-0}, $\mean{𝐇}$ is an integrable truncation
  of $𝐇$ and $𝐏_{ε}$ is a remainder term that is real analytic in $ε$
  for $ε>0$ and continuous at $ε=0$.
\end{lemma}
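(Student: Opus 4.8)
The plan is to build the symplectomorphism $φ$ as a composition of three transformations, each of which puts one feature of $𝐇$ into the desired form. First, I would apply the Liouville--Arnol'd--Duistermaat picture (Theorem~\ref{thm:louville-arnold}) to introduce angle-action variables $(θ,I)$ on a toroidal cylinder of $L$ containing the base of $\thermostateqm{T}$, leaving the thermostat pair $(s,S)$ untouched; this puts $H_σ$ in the form $\hat{G}(I;σ)$ with $σ=1/s$ by Proposition~\ref{prop:hlam-gen-fun}, so that $𝐇 = \hat{G}(I;1/s) + N_T(s,S) + 𝐑$ with $𝐑 = O(σ-1)$ by Proposition~\ref{prop:hlam-expansion}. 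The key point here is that the pullback of $H_σ$ to fixed action-angle coordinates absorbs all of the $θ$-dependence into the remainder; the leading term $\hat{G}(I;1/s) + T\ln s$ depends only on $(I,s)$.

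Second, I would locate the thermostatic equilibrium set. Since $T$ is a regular value of $κ$ and $κ = \kp{I}{\D{G}(I)}$ by Proposition~\ref{prop:mean-temph}, the implicit function theorem gives (as in the remark after Proposition~\ref{prop:thermo-eq}) a real-analytic function $s_o(I)$ with $κ(I/s_o(I)) = T$ on a neighbourhood, and $\thermostateqm{T}$ is the graph $\{s = s_o(I), S = 0\}$. A translation $s \mapsto s_o(I) + (\text{new }s)$, extended canonically to the momenta, moves $\thermostateqm{T}$ to the zero section in the thermostat direction; one then Taylor-expands $N_T(s,S) + \hat{G}(I;1/s)$ about $S=0$ and $s = s_o(I)$. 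Because $N_T$ is a generalized variable-mass thermostat of order $2$, we have $F_S = 0$ and $F_{SS} = Ω_2(s) > 0$ along $S=0$; the $T\ln s$ term contributes a linear-in-$(s-s_o)$ piece whose coefficient is exactly cancelled by $\partial_s \hat{G}(I;1/s)$ at the equilibrium, by the very definition of $s_o$ via $κ$. So the Hessian of $\mean{𝐇}_0$ transverse to $\thermostateqm{T}$ is positive-definite in the thermostat plane, a harmonic oscillator with some frequency depending on $I$.

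Third comes the rescaling that introduces $ε$. Writing the thermostat mass as $1/a$ with $ε = \sqrt a$ and rescaling the thermostat conjugate pair by $(s - s_o, S) \mapsto (\sqrt ε \, v, \sqrt ε \, V)$ — a symplectic rescaling up to an overall constant that can be absorbed — converts the transverse quadratic form into $½ ε α(I)(v^2 + V^2)$, where $α$ is the geometric mean of the two Hessian eigenvalues (this is where the formula \eqref{eq:gamma-alpha} for $α$ enters). The cubic and higher terms in the Taylor expansion, together with the remainder $𝐑$, scale like $ε^{3/2}$ or better: the cubic terms in $(v,V)$ carry $ε^{3/2}$, the $θ$-dependent remainder $𝐑 = O(1/s - 1)$ is $O(\sqrt ε \, v)$ hence also controlled once one checks it vanishes to the right order at the equilibrium, and everything is real-analytic in $ε > 0$ and continuous at $ε = 0$ since the rescaling is by $\sqrt ε$. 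Collecting the $ε$-independent and $ε^1$ terms into $\mean{𝐇} ⎄ φ$ and the rest into $ε^{3/2} 𝐏_ε ⎄ φ$ gives \eqref{eq:normal-form-near-T-h}. The neighbourhoods $W$, $X$ and the graph $\thermostateqm{\hat T}$ are then read off from the domains on which $s_o$ is defined and the rescaling is a diffeomorphism, with $X_c$ mapping to $\thermostateqm{\hat T}$ by construction.

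The main obstacle I expect is the bookkeeping of orders under the $\sqrt ε$ rescaling: one must verify that \emph{every} term other than the quadratic thermostat oscillator and the $I$-only part genuinely carries at least $ε^{3/2}$, which requires knowing that the linear-in-$(s-s_o)$ term cancels exactly (forcing the use of the identity $κ = \kp{I}{\D G(I)}$ at just the right place) and that the $θ$-dependent remainder $𝐑$ contributes nothing at order $ε$ or $ε^{1/2}$. A secondary technical point is that the thermostat Taylor coefficients $Ω_k(s)$ must be evaluated at $s_o(I)$, which varies with $I$, so the frequency $α(I)$ and all higher coefficients are genuinely $I$-dependent real-analytic functions; keeping track of their analyticity and of the uniformity of the remainder estimates as $ε \to 0^+$ is the delicate part, but it is routine given the real-analyticity hypotheses.
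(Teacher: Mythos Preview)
There is a genuine gap in your first step. You write that you will introduce angle--action variables ``leaving the thermostat pair $(s,S)$ untouched'' and thereby obtain $H_{1/s}=\hat G(I;1/s)$ via Proposition~\ref{prop:hlam-gen-fun}. These two things are incompatible: the symplectomorphism $f_\sigma$ of that proposition \emph{depends on} $\sigma$, so when $\sigma=1/s$ the resulting map on the full phase space $(\theta,I,s,S)$ is not symplectic unless $S$ is adjusted. If instead you keep $(s,S)$ fixed and use only the fixed angle--action chart of $H$, what you get is the naive truncation $G(I/s)$ of Proposition~\ref{prop:hlam-expansion} with the $\theta$-dependent remainder $\mathbf R$ of~\eqref{eq:int-trunc-Hb}---and that remainder vanishes at $s=1$, not at $s=s_o(I)$. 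After your shift and rescaling it therefore contributes a $\theta$-dependent term at order $\varepsilon^0$, which destroys the normal form. This is exactly the obstacle you flag at the end (``that the $\theta$-dependent remainder $\mathbf R$ contributes nothing at order $\varepsilon$ or $\varepsilon^{1/2}$''), and it is not resolvable within your scheme; the paper itself warns, just after Proposition~\ref{prop:int-trunc}, that this truncation ``is not a good enough integrable truncation of $H_\sigma$ to be exploited'' here.

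The paper's remedy is to extend the generating function $\Sigma(\hat\theta,I;\sigma)$ of $f_\sigma$ to the full phase space via $\nu(\hat\theta,I,s,\hat S)=\hat\theta\cdot I+\Sigma(\hat\theta,I;1/s)+\hat S\,s$. The induced symplectomorphism shifts $S$ by the $\theta$-dependent quantity $-(1/s^2)\,\partial_\sigma\Sigma$, so that one obtains \emph{exactly}
\[
\mathbf H\circ f=\hat G(\hat I;1/\hat s)+\tfrac{a}{2}\bigl(\hat S-\hat s^{-2}\hat\Sigma_3\bigr)^{2}+T\ln\hat s,
\]
with no separate remainder term. All of the $\theta$-dependence now sits inside the thermostat kinetic term and hence already carries the prefactor $a=\varepsilon^2$; after the shift to $s_o$ and the symplectic rescaling $\tilde u=\gamma\check v$, $\tilde U=\check V/\gamma$ with $\gamma\propto\varepsilon^{1/2}$ (one coordinate up, one down---your stated rescaling of both by $\sqrt\varepsilon$ is not symplectic), these terms land at $O(\varepsilon^{3/2})$ as claimed.
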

\begin{proof}
  Let $V ⊂ L$ be a toroidal cylinder with angle-action variables
  $(\hat{θ},\hI) : V → \T^n × B$ for $H$. By
  proposition~\ref{prop:hlam-gen-fun}, for each such $V ⊂ L$ and each
  $σ$ sufficiently close to $1$, there is a symplectic map
  $f_{σ} : V → L$ such that $H_{σ} ⎄ f_{σ}$ is a real-analytic
  function of the action variables $\hI$ and the parameter $σ$,
  i.e. $H_{σ} ⎄ f_{σ}(\hat{θ},\hI) = \hat{G}(\hI;σ)$ where $\hat{G}$
  is real-analytic in both the action variable and parameter $σ$ and
  $f_1$ is the identity. It follows that there is a real-analytic
  generating function $\hat{θ} · I + Σ(\hat{θ},I;σ)$ of the
  symplectomorphism $f_{σ}$ that satisfies
  \begin{align}
    \label{eq:normal-form-gen-fun}
    (θ,I)     & = f_{σ}(\hat{θ},\hI)                 &  & \textrm{iff} & \hI              & = I+\didi[{Σ}]{\hat{θ}}, & θ & = \hat{θ}+\didi[{Σ}]{I}, \\\notag
              &                                        &  & \textrm{and} & Σ(\hat{θ},I;σ) & = O(σ-1).
  \end{align}
  Let us use the symplectomorphism $f_{σ}$ to define a
  symplectomorphism $f$ of $V × \cotangent \R^+$ as follows: the
  generating function of $f$ is defined by
  \begin{equation}
    \label{eq:normal-form-gen-fun-ext}
    ν(\hat{θ},I,s,\hat{S}) = \hat{θ} · I + Σ(\hat{θ},I;1/s) + \hat{S} s.
  \end{equation}
  Hence,
  \begin{align}
    \label{eq:normal-form-gen-fun-ext-trans}
    (θ,I,s,S) & = f(\hat{θ},\hI,\hat{s},\hat{S}) &  & \textrm{iff} & S &= \hat{S} - \dfrac{1}{s^2} \, \didi[{Σ}]{σ}, & \hat{s} &= s,
  \end{align}
  and \eqref{eq:normal-form-gen-fun} holds. Therefore,
  \begin{equation}
    \label{eq:normal-form-H-f}
    𝐇 ⎄ f(\hat{θ},\hI,\hat{s},\hat{S}) = \hat{G}(\hI ; 1/\hat{s}) + \dfrac{a}{2} \left( \hat{S} - \dfrac{1}{\hat{s}^2} \, \hat{Σ}_3 \right)^2 + T \ln \hat{s}
  \end{equation}
  where $\hat{Σ}$ is $Σ$ evaluated at $(\hat{θ},I;σ)$ with $I$
  a function of $\hat{θ}$ and $\hI$ by
  \eqref{eq:normal-form-gen-fun} and
  $\hat{Σ}_3 = \didi[\hat{Σ}]{σ}$ is the partial derivative with
  respect to the third variable.

  Let us define the set of thermostatic equilibria,
  $\thermostateqm{\hat{T}}$, for the function $\hat{G}$. Let
  $$ \thermostateqm{\hat{T}} = \set{(\hat{θ},\hat{I},\hat{s},\hat{S}) \mid \hat{G}_2(I;1/\hat{s}) = \hat{s} T, \hat{s}>0, \hat{S}=0 }, $$
  where $\hat{G}_2$ is the partial derivative of $\hat{G}$ with
  respect to the second variable. The function
  $\hat{κ} := \hat{G}_2(\hI;1/\hat{s}) / \hat{s}$ is the orbit mean
  temperature of $H$ in this system of coordinates. By hypothesis and
  since the map $f$ is exact symplectic, by
  remark~\ref{rem:invariance-of-orbit-mean-temp}, when $a=0$ the
  function $\hat{κ}$ equals $κ ⎄ f$ and $T$ is a regular value. Hence,
  for all $a$ sufficiently small there are neighbourhoods
  $\hat{J} ∋ 1$ and $\hat{W} ⊃ \hat{κ}^{-1}(T)$ and a real-analytic
  function $\hat{s}_o : \hat{W} → \hat{J}$ such that
  \begin{align}
    \label{eq:hats-0}
    (\hI,\hat{s}) ∈ \hat{W} × \hat{J}, \quad \hat{κ}(\hI;1/\hat{s}) = T \qquad\iff\quad \hat{s}=\hat{s}_o(\hI).
  \end{align}
  
  Define a generating function $\hat{ν}$ by
  \begin{equation}
    \label{eq:normal-form-gen-fun-ext2}
    \hat{ν}(\tilde{θ},\hI,\tilde{u},\hat{S}) = \tilde{θ} · \hI + \dfrac{\hat{S} \hat{s}_o(\hI)}{1-\tilde{u}} 
  \end{equation}
  and symplectomorphism $\hat{f}$ by
  \begin{align}
    \label{eq:normal-form-gen-fun-ext-trans2}
    (\hat{θ},\hI,\hat{s},\hat{S})                      & = \hat{f}(\tilde{θ},\tI,\tilde{u},\tilde{U}) &  & \textrm{iff} & \hat{S} & = \tilde{U} (1-\tilde{u})^2/s_o(\tI), & \hat{s} & = \hat{s}_o(\tI)/(1-\tilde{u}) \\\notag
                                                       &                                              &  &              & \hI     & = \tI,                                & \hat{θ} & = \tilde{θ} + \didi[{\hat{ν}}]{\hI}.
  \end{align}
  This produces
  \begin{align}
    \notag
    & 𝐇 ⎄ f ⎄ \hat{f}(\tilde{θ},\tI,\tilde{s},\tilde{S}) \\\notag
    & = \hat{G}(\tI ; (1-\tilde{u})/\hat{s}_o(\tI)) + \dfrac{a}{2} \left( \tilde{U}(1-\tilde{u})^2/\hat{s}_o(\tI) - \left( \dfrac{1-\tilde{u}}{\hat{s}_o(\tI)} \right)^2 \, \tilde{Σ}_3 \right)^2 \\\label{eq:normal-form-H-f2-full}
                                                       & \phantom{=}    + T \ln \hat{s}_o(\tI) - T \ln(1-\tilde{u}), \\\notag
    &= \hat{G}(\tI;1/\hat{s}_o) + T \ln \hat{s}_o + ½ \tilde{u}^2 \, \left( \dfrac{\hat{G}_{22}(\tI;1/\hat{s}_o)}{\hat{s}_o^2} + T \right) + \dfrac{a}{2 \hat{s}_o^2} \tilde{U}^2 + T O_3(\tilde{u}) \\\label{eq:normal-form-H-f2}
    &\phantom{=} + \dfrac{a}{2} \, \left( \dfrac{1-\tilde{u}}{\hat{s}_o} \right)^4 \, \tilde{Σ}_3 \, \left( - 2 \tilde{U} \hat{s}_o + \tilde{Σ}_3 \right),
  \end{align}
  where $T O_3(\tilde{u})=T(\tilde{u}^3/3+\cdots)$ is the collection
  of terms of degree $3$ and higher coming from last term
  in~\eqref{eq:normal-form-H-f2-full}.

  Next, define a generating function
  \begin{align}
    \label{eq:normal-form-gen-fun-ext3}
    \tilde{ν}(\check{θ},\tI,\check{v},\tilde{U}) & = \check{θ} · \tI + γ(\tI) \check{v} \tilde{U},          &  &  & \textrm{ where } \\
    \label{eq:gamma-alpha}
    γ(\tI)                                       & = \sqrt[4]{ \dfrac{a}{ \hat{G}_{22} + \hat{s}_o^2 T } }, &  &  & α(\tI) & = \dfrac{\sqrt{ \hat{G}_{22} + \hat{s}_o^2 T }}{\hat{s}_o^2},
  \end{align}
  The resulting symplectomorphism, $\tilde{f}$,
  transforms~\eqref{eq:normal-form-H-f2} into the Hamiltonian
  $𝐆=𝐆_{ε,T}$ where (with $ε = \sqrt{a}$ and dropping the $\check{\ }$
  decoration on the variables)
  \begin{align}
    \label{eq:normal-form-H-f3}
    𝐆                                            & = \underbrace{\hat{G}(I;1/\hat{s}_o) + T \ln \hat{s}_o(I) + ½ ε α(I) \left( v^2 + V^2 \right)}_{\mean{𝐆}} + ε^{3/2} \, P(I,θ,v,V;ε),
  \end{align}
  where $P$ is real-analytic in its arguments, real-analytic in $ε$
  for $ε>0$ and continuous at $ε=0$.

  The remainder of the proof of the lemma follows from defining
  $φ = f ⎄ \hat{f} ⎄ \tilde{f}$, whence $𝐆 = 𝐇 ⎄ φ$ and $P = 𝐏 ⎄ φ$.
\end{proof}

\begin{remark}
  \label{rem:normal-form-near-T}
  The reason the proof of Lemma~\ref{lem:normal-form-near-T} works is
  ultimately due to the fact that each $H_{σ}$ is completely
  integrable, or in somewhat less precise terms, the Nos{é}-thermostat
  ``vibrates'' an integrable hamiltonian within a family of integrable
  hamiltonians. One can see that the preceding lemma readily admits
  the following generalization:

  Let $N_T(s,S) = F(s,S) + T \ln s$ be a generalized variable-mass
  thermostat of order
  $2$~(definition~\ref{def:order-2-thermostat-generalized}) and define
  $N_{a,T}(s,S) = a^{-2} N_T(s,aS)$ for $a> 0$. If $𝐇$ is thermostated
  by $aN_{a,T}$, then the conclusions of
  Lemma~\ref{lem:normal-form-near-T} hold as stated with the only
  exception that $α$ equals $\sqrt{Ω_2(\hat{s}_o)}$ times the $α$
  appearing in~\eqref{eq:gamma-alpha}.

  In this case, one sees that the remainder term $𝐏_{ε}$ absorbs all
  but the lowest degree term ($½ a Ω_2(s) S^2$) in the thermostat, and
  when we expand around the thermostatic equilibrium set, only the
  lowest degree term ($½ a Ω_2(\hat{s}_o) (\tilde{U}/\hat{s}_o)^2$) is
  not absorbed in the remainder.
\end{remark}

\begin{remark}
  \label{rem:normal-form-near-T-nose}
  To lowest order, the frequency of the normal oscillations of
  $\mean{𝐇} | \thermostateqm{\hat{T}}$ is $ε α(I)$ where $ε = √ a$. In
  \cite[eq. 2.29]{doi:10.1080/00268978400101201}, Nos{é} derives an
  approximation to the frequency of the oscillations of the thermostat
  state $s$. In his solution, one finds the approximation to this
  frequency to be, in the notation of the current paper,
  \[
    ε \, \left( \dfrac{2 n T }{(n+1) s_o^2 } \right)^{½},
  \]
  In comparison to \eqref{eq:gamma-alpha}, Nos{é}'s approximation
  imputes the value $\hat{G}_{22} = nT/(n+1) s_o^2$ along the
  thermostatic equilibrium set. On the other hand, it follows from the
  calculations in remarks~\ref{rem:normal-form-near-T-example} \&
  \ref{rem:normal-form-near-T-example-contd}, that
  $\hat{G}_{22} = (λ-1) T \, s_o^2$ on that same set when $H = G(I)$
  is positively homogeneous of degree $λ$ in $I$, so Nos{é}'s
  approximation is only correct when the degree of homogeneity
  $λ = 2 - 2/(n+1)$. Leimkuhler \& Sweet~\cite[p. 190]{MR2136523} use
  Nos{é}'s approximation to determine an ``optimal'' choice of mass
  $Q$ ($=1/a$) to thermostat a single harmonic oscillator, but
  they find that it produces an unsatisfactory distribution of the
  thermostat state and need to make $Q$ almost an order of magnitude
  smaller to produce a satisfactory distribution.

  Figures~\ref{fig:nose} \& \ref{fig:nose-1e-1} show the ratio of the
  frequencies of the normal and internal oscillations of a $1$-degree
  of freedom hamiltonian of the form
  of~\eqref{eq:normal-form-near-T-homog} that is thermostated with the
  Nos{é} thermostat with $a=10^{-2}$ and $10^{-1}$, respectively, and
  $T=1$. It is clear from the graphs that Nos{é}'s approximation is
  only accurate for the thermostated harmonic oscillator, while the
  present paper's approximation is accurate in all the examples
  considered.

  \begin{figure}[p]
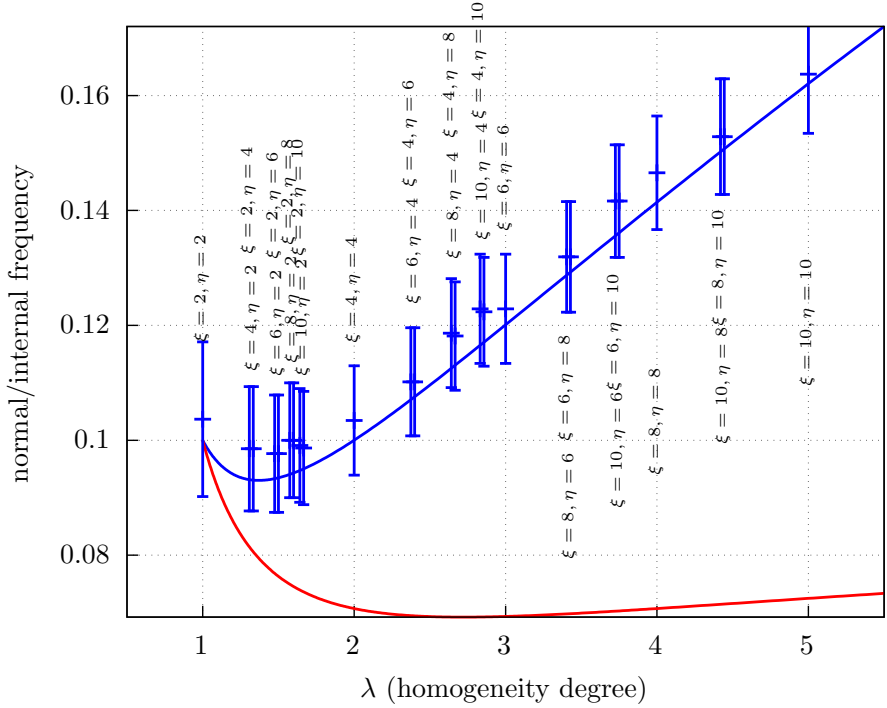

    \centering
    \caption{(T) The ratio of the frequencies of the normal (thermostat)
      and internal oscillations of a weighted homogeneous, single
      degree-of-freedom hamiltonian. The upper curve (blue) is that
      implied by Lemma~\ref{lem:normal-form-near-T} while the lower
      curve (red) is implied by Nos{é}'s approximation (see
      Remark~\ref{rem:normal-form-near-T-nose}). The dots are
      determined by integrating the Nos{é}-thermostated
      weighted-homogeneous
      hamiltonians~\eqref{eq:normal-form-near-T-homog} with even
      integer exponents $2 ≤ ξ, η ≤ 10$. The computation of the error
      bar is described in~figure~\ref{fig:nhnd-freq-resp-256}. The
      orbits are started at $x=0, S=0, s=1$ and $p_x$ is determined from
      the condition that $H=T/λ$ (see
      remark~\ref{rem:normal-form-near-T-example}). A stepsize of
      $h=2^{-5}$ and time interval of length $2^{10}$ is used;
      $a=10^{-2}$ and $T=1$ are the fixed thermostat
      parameters. (ML+R) The Fourier transforms of $x,p_x, √ a S$ and
      $s$ (ignoring the mean value of $s$) for the Nos{é}-thermostated
      harmonic oscillator ($ξ=2=η$). (BL+R) As above, with $ξ=10=η$. }
    \label{fig:nose}
    \ltxfigure{nhnd/nhnd-nose-freq-rat-mac.tex}{\textwidth}{!} \\
    \ltxfigure{nhnd/nhnd-nose-freq-rat-x-p-S-xi=2-eta=2-mac.tex}{0.45\textwidth}{!}
    \ltxfigure{nhnd/nhnd-nose-freq-rat-s-S-xi=2-eta=2-mac.tex}{0.45\textwidth}{!} \\
    \ltxfigure{nhnd/nhnd-nose-freq-rat-x-p-S-xi=10-eta=10-mac.tex}{0.45\textwidth}{!}
    \ltxfigure{nhnd/nhnd-nose-freq-rat-s-S-xi=10-eta=10-mac.tex}{0.45\textwidth}{!}
  \end{figure}
  \begin{figure}[p]
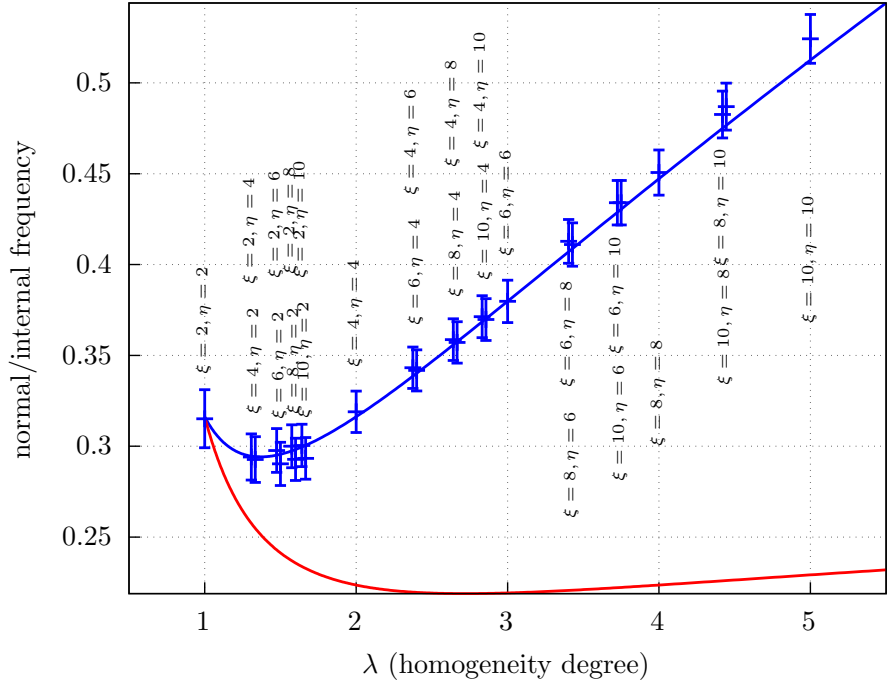

    \centering
    \caption{(T,ML+R) As for figure~\ref{fig:nose}, but with
      $a=10^{-1}$. (BL+R) As above, with $ξ=10, η=6$. The dominant
      frequency of the oscillations in $x$ (and $p_x$) is visible at
      $k=235$ while there are ``beats'' at $k=134, 335$; the secondary
      peak at $k=702$ is surrounded by two pair of beat frequencies at
      $k=803,904$ and $k=601,500$. The dominant frequency in $S$ is at
      $k=468$ and is driven by the internal oscillations of $x$ and
      $p_x$. It is surrounded by two pair of beats at $k=569,670$ and
      $k=266,367$. The frequency of the normal oscillations of $S$
      ($k=102$) is responsible for the beating.}
    \label{fig:nose-1e-1}
    \ltxfigure{nhnd/nhnd-nose-freq-rat-a=1e-1-mac.tex}{\textwidth}{!} \\
    \ltxfigure{nhnd/nhnd-nose-freq-rat-x-p-S-xi=2-eta=2-a=1e-1-mac.tex}{0.45\textwidth}{!}   
    \ltxfigure{nhnd/nhnd-nose-freq-rat-s-S-xi=2-eta=2-a=1e-1-mac.tex}{0.45\textwidth}{!} \\
    \ltxfigure{nhnd/nhnd-nose-freq-rat-x-p-S-xi=10-eta=10-a=1e-1-mac.tex}{0.45\textwidth}{!}   
    \ltxfigure{nhnd/nhnd-nose-freq-rat-s-S-xi=10-eta=10-a=1e-1-mac.tex}{0.45\textwidth}{!}
  \end{figure}
\end{remark}

\begin{remark}
  \label{rem:normal-form-near-T-example}
  Let us compute an example to illustrate
  Lemma~\ref{lem:normal-form-near-T}. Let the number of degrees of
  freedom be $n ≥ 1$, let $ξ,η > 0$ be even integers and $λ>0$ satisfy
  the identity $1/λ = 1/ξ + 1/η$. Define
  \begin{align}
    \label{eq:normal-form-near-T-homog}
    H(q,p) &= h(q_1,p_1) + \cdots + h(q_n,p_n), &&& h(x,p_x) &= \dfrac{1}{c} \left( p_x^{ξ} + x^{η} \right).
  \end{align}
  (the constant $c>0$ is a normalization constant chosen below). The
  hamiltonian $H$ is separable and a simple computation shows that in
  action-angle variables
  \begin{align}
    \label{eq:normal-form-near-T-homog-aa}
    H = G ⎄ I &= I_1^{λ} + \cdots + I_n^{λ},
  \end{align}
  where the normalization constant $c$ is chosen so that
  $c = \left( π η /(2 Β(1+1/ξ,1/η)) \right)^{λ}$ and $Β$ is the Beta
  function~\cite[Chapter 6]{MR1225604}.

  It is easy to see that $H_{σ}(q,p) = σ^{λ} \, H(Q,P)$ when $q=Q
  σ^{χ}, p = P/σ^{χ}$ and $χ = λ/ξ$. The generating function
  \begin{equation}
    \label{eq:normal-form-gen-fun-1}
    ν(Q,p,s,\hat{S}) = Q · p · s^{-χ} + \hat{S} s
  \end{equation}
  induces the symplectomorphism $(q,p,s,S) = f(Q,P,\hat{s},\hat{S})$
  \begin{equation}
    \label{eq:normal-form-f-1}
    (q,p,s,S) = (Q s^{-χ}, P s^{χ}, \hat{s}, \hat{S} - χ \hat{s}^{-1} Q P).
  \end{equation}
  Therefore, the Nos{é}-thermostated hamiltonian is transformed to
  \begin{equation}
    \label{eq:normal-form-H-f-1}
    𝐇 ⎄ f(Q,P,\hat{s},\hat{S}) = \hat{s}^{-λ} H(Q,P) + \dfrac{a}{2} \left( \hat{S} - χ \hat{s}^{-1} Q P \right)^2 + T \ln \hat{s}.
  \end{equation}
  With the convenient notation that $I^{λ} = \sum_{i=1}^n I_i^{λ} = G(I)$, one sees that the function $\hat{G}$ equals
  \begin{equation}
    \label{eq:normal-form-hatG-1}
    \hat{G}(\hat{I};1/\hat{s}) = G(\hI/\hat{s}) = (\hat{I}/\hat{s})^{λ}.
  \end{equation}
  The thermostatic equilibrium set $\thermostateqm{\hat{T}} = \hat{G}^{-1}(T/λ)$ and so
  \begin{equation}
    \label{eq:normal-form-s0}
    \hat{s}_o(\hI) = \left( \dfrac{λ G(\hI)}{T} \right)^{\dfrac{1}{λ}}.
  \end{equation}
\end{remark}

\subsection{Main Theorem}
\label{sec:main-theorem}

The re-scaled frequency map of
$\mean{𝐆} = \mean{𝐇} ⎄ φ$~\eqref{eq:normal-form-H-f3} from
Lemma~\ref{lem:normal-form-near-T} is computed to be
\begin{align}
  \label{eq:rescaled-frequency-map}
  Ω(I) & = (\hat{G}_1(I;1/\hat{s}_o),\, α(I)), &  &  & α(I) & =  \dfrac{\sqrt{Ω_2(\hat{s}_o) \, ( \hat{G}_{22} + \hat{s}_o^2 T )}}{\hat{s}_o^2},
\end{align}
using equation~\eqref{eq:hats-0} and
remark~\ref{rem:normal-form-near-T}.
\begin{theorem}
  Assume that for some $T>0$, the re-scaled frequency map $Ω$ of the
  function $𝐆_{ε,T}$~\eqref{eq:normal-form-H-f3} is R-non-degenerate
  when $ε=0$. Then, there exists a function $ε_o(T)$ such that $ε_o$
  is positive for all but countably many values of $T>0$ and for all
  $0 < ε < ε_o(T)$, there exists a neighbourhood of the thermostatic
  equilibrium set $\hat{\thermostateqm{T}}$ and a positive-measure,
  $𝐆_{ε,T}$-invariant set $K_{ε,T}$ in this neighbourhood. In
  particular, the hamiltonian flow of $𝐆_{ε,T}$ is not ergodic for
  $0 < ε < ε_o(T)$.
\end{theorem}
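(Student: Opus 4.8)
The normal form \eqref{eq:normal-form-H-f3} provided by Lemma~\ref{lem:normal-form-near-T} presents $\mathbf{G}_{\epsilon,T}$ as a properly degenerate nearly-integrable Hamiltonian: $n$ internal angles carrying frequencies of order $1$, one thermostat oscillator $(v,V)$ carrying a frequency of order $\epsilon$, and a non-integrable remainder of order $\epsilon^{3/2}$. The plan is to cast this in a form fed into the degenerate, R\"ussmann-type KAM theorem of \S\,6 (cf.\ \cite{MR2505319,MR2104595}), with R-non-degeneracy of $\Omega$ supplying the weak non-degeneracy that theorem requires. First I would pass to symplectic polar coordinates $v=\sqrt{2\rho}\,\cos\psi$, $V=\sqrt{2\rho}\,\sin\psi$ in the thermostat oscillator, turning \eqref{eq:normal-form-H-f3} into
\[
  \mathbf{G}_{\epsilon,T}=\bar{G}(I)+\epsilon\,\alpha(I)\,\rho+\epsilon^{3/2}\,Q(\theta,\psi,I,\rho;\epsilon),
\]
with $\bar{G}(I)=\hat{G}(I;1/\hat{s}_o(I))+T\ln\hat{s}_o(I)$, angles $(\theta,\psi)\in\T^{n+1}$, actions $(I,\rho)$ on a domain reaching $\{\rho=0\}=\hat{\thermostateqm{T}}$, and $Q$ real-analytic in all variables and in $\epsilon>0$, continuous at $\epsilon=0$ (hence of uniformly bounded size); the customary care at the polar singularity $\rho=0$, which is what lets the tori produced below accumulate on $\hat{\thermostateqm{T}}$ itself rather than on a shell bounded away from it, is part of the \S\,6 framework. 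Rescaling time by $\epsilon^{-1}$ (equivalently, dividing $\mathbf{G}_{\epsilon,T}$ by $\epsilon$) replaces the Hamiltonian by $\epsilon^{-1}\bar{G}(I)+\alpha(I)\rho+\epsilon^{1/2}Q$: the oscillator frequency is now of order $1$ and the remainder is of \emph{relative} size $\epsilon^{1/2}\to0$. This is where the exponent $3/2$ in \eqref{eq:normal-form-H-f3} is used.

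Next I would verify the non-degeneracy hypothesis. The integrable part $N_\epsilon(I,\rho)=\epsilon^{-1}\bar{G}(I)+\alpha(I)\rho$ has frequency map $\partial N_\epsilon=\bigl(\epsilon^{-1}\D{\bar{G}}(I)+\rho\,\D{\alpha}(I),\ \alpha(I)\bigr)$, and R-non-degeneracy of $\Omega=(\D{\bar{G}},\alpha)$ (taken at $\epsilon=0$, where $\bar{G}$, $\hat{s}_o$ and $\alpha$ are defined and $\epsilon$-independent) forces R-non-degeneracy of $\partial N_\epsilon$ for every $\epsilon>0$, even on a domain reaching $\rho=0$: if $\langle(k,\ell),\partial N_\epsilon(I,\rho)\rangle\equiv0$, then differentiating in $\rho$ gives $\langle k,\D{\alpha}(I)\rangle\equiv0$, which removes the $\rho$-term and leaves $\langle k,\D{\bar{G}}(I)\rangle+\epsilon\ell\,\alpha(I)\equiv0$, i.e.\ $\langle(k,\epsilon\ell),\Omega(I)\rangle\equiv0$, whence $k=0$ and $\epsilon\ell=0$, so $(k,\ell)=0$. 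Feeding this and the bound $\|\epsilon^{1/2}Q\|=O(\epsilon^{1/2})$ into the \S\,6 theorem yields an $\epsilon_o=\epsilon_o(T,G)>0$ such that for each $\epsilon\in(0,\epsilon_o)$ there is a neighbourhood of $\hat{\thermostateqm{T}}$ and, inside it, a positive-measure union $K_{\epsilon,T}$ of Lagrangian invariant $(n+1)$-tori of $\mathbf{G}_{\epsilon,T}$ (time rescaling changes neither orbits nor tori). Since $K_{\epsilon,T}$ is closed and nowhere dense, its complement within that neighbourhood is open, dense and of positive measure; so $K_{\epsilon,T}$ has positive measure, strictly below the finite measure of the neighbourhood, whence the flow of $\mathbf{G}_{\epsilon,T}$ is not ergodic.

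Finally, the claim that $\epsilon_o(T)>0$ for all but countably many $T>0$. The running hypotheses require $T$ to be a regular value of the real-analytic function $\kappa$, and the critical values of $\kappa$ form a countable set (the critical set is a real-analytic variety, so its image in $\R$ is a countable union of points). On the complementary open set of $T$, the data $\hat{s}_o,\bar{G},\alpha$ depend real-analytically on $(I,T)$ and R-non-degeneracy is an open condition in $T$: if $\Omega_{T_0}$ is R-non-degenerate, choose $I_1,\dots,I_{n+1}$ with $\det[\Omega_{T_0}(I_1)\,|\,\cdots\,|\,\Omega_{T_0}(I_{n+1})]\neq0$; then $T\mapsto\det[\Omega_T(I_1)\,|\,\cdots\,|\,\Omega_T(I_{n+1})]$ is real-analytic and not identically zero, hence vanishes only on a discrete set, off which $\Omega_T$ is R-non-degenerate and the construction above produces $\epsilon_o(T)>0$. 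Discarding the countably many critical values of $\kappa$ together with this discrete set leaves a co-countable set of good $T$.

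The main obstacle is the KAM step itself, which is genuinely \emph{degenerate}. The small divisors $\langle k,\epsilon^{-1}\D{\bar{G}}(I)+\rho\,\D{\alpha}(I)\rangle+\ell\,\alpha(I)$ superpose a component of size $\epsilon^{-1}$ with one of size $1$, the torsion of $N_\epsilon$ is singular as $\epsilon\to0$, and Kolmogorov's or Arnol'd's iso-energetic non-degeneracy is therefore unavailable with $\epsilon$-uniform constants; the iteration must be run under R\"ussmann's weak (R-)non-degeneracy, and one must track how the quantitative non-degeneracy and the smallness threshold degrade as $\epsilon\to0$ and as $T$ varies. This is precisely the work of \S\,6; the point is that the $\epsilon^{1/2}$ relative smallness of the remainder — a consequence of the exponent $3/2$ in \eqref{eq:normal-form-H-f3} — outruns that degradation. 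A subsidiary, purely technical, issue is the behaviour at the polar singularity $\rho=0$, which must be handled so that the invariant tori accumulate on $\hat{\thermostateqm{T}}$ and not merely on a shell; this too is folded into the \S\,6 framework. By Herman-F\'ejoz~\cite{MR2104595} the same conclusions hold in the $C^\infty$ category.
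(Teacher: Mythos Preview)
Your proposal is correct and follows essentially the same route as the paper: combine the normal form of Lemma~\ref{lem:normal-form-near-T} with the properly-degenerate KAM theorem of \S\ref{sec:pdgen-kam} (Theorem~\ref{thm:modified-chierchia-pusateri}). The paper's own proof is literally one line invoking those two results; your passage to polar coordinates and the $\epsilon^{-1}$ time-rescaling are unnecessary detours, since Theorem~\ref{thm:modified-chierchia-pusateri} is already stated in Cartesian oscillator coordinates $(x,y)$ for Hamiltonians of the exact shape $H_0(I)+\epsilon\bigl(\alpha(I)J+\cdots\bigr)$, with the re-scaled frequency map $\Omega=(\D H_0,\alpha)$ built in --- so the polar singularity and the two-timescale bookkeeping you worry about are handled internally and need not be re-argued. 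Your real-analyticity argument for the ``all but countably many $T$'' clause is a genuine addition: the paper's one-line proof does not spell this out.
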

\begin{proof}
  By Lemma~\ref{lem:normal-form-near-T} and
  Theorem~\ref{thm:modified-chierchia-pusateri}, the theorem follows.
\end{proof}

\begin{remark}
  \label{rem:normal-form-near-T-example-contd}
  (Continuation of remark~\ref{rem:normal-form-near-T-example}). It is
  straightforward to verify that, when $λ ≠ 1$, the re-scaled
  frequency map $Ω(I)$~\eqref{eq:rescaled-frequency-map} is
  R-non-degenerate for this example. Indeed, one can verify that the
  map $I → \hat{G}_1(I;1/\hat{s}_o)$ is a diffeomorphism onto its
  image: This is because it equals the diffeomorphism $I → \D{G}(I)$
  multiplied by a positive scalar ($1/\hat{s}_o^{λ}$) and the degree
  of homogeneity of their product is $-1$. Therefore, if the image of
  $Ω$ is contained in a linear subspace, there is a normal vector to
  the subspace of the form $v=(w,-1)$ and so
  $α(I) = \ip{w}{\hat{G}_1(I;1/\hat{s}_o)}$ for a fixed vector $w$. In
  this case, $α(I) = \sqrt{Ω_2(\hat{s}_o)} \, \sqrt{λ T} / \hat{s}_o$
  so the subspace condition implies the identity in $I$:
  \begin{equation}
    \label{eq:omega2}
    \sqrt{Ω_2(\hat{s}_o)} = \sqrt{ T/λ } \, \dfrac{1}{G(I)} \, \sum_{i=1}^n w_i I_i^{λ-1}.
  \end{equation}
  Evaluation of each side at the points $I^{(i)}=δ_{ij}$ ($δ$ is
  Kronecker's delta function), implies that
  $w_i = w_o = \sqrt{Ω_2(\hat{s}_{*})} (λ/T)^{½ + 1/λ}$ where
  $\hat{s}_{*} = (λ/T)^{1/λ}$. Substituting this into
  \eqref{eq:omega2} implies
  \begin{equation}
    \label{eq:omega3}
    \sqrt{Ω_2(\hat{s}_o)} = \sqrt{Ω_2(\hat{s}_{*})} · λ^{2/λ} · T^{-2} · \sum_{i=1}^n I_i^{λ-1}.
  \end{equation}
  Let $I = b s I^{(i)}$ for $b = (T/λ)^{1/λ}$. Then $G(I)= (bs)^{λ}$ and
  $\hat{s}_o(I) = s$ so \eqref{eq:omega3} implies that
  \begin{equation}
    \label{eq:omega4}
    Ω_2(s) = Ω_2(\hat{s}_{*}) · (λ/T)^{2(1+1/λ)} · s^{2(λ-1)} = c s^{2λ-2}.
  \end{equation}
  On the other hand, if $I = bs \sum_{i=1}^k I^{(i)}$ for $k ≤ n$,
  then $G(I) = k (bs)^{λ}$ and $\hat{s}_o(I) = k^{1/λ} s$, which
  implies that $Ω_2(s) = k^{1+2/λ} · c · s^{2λ-2}$. Thus, when $n>1$,
  there is no solution to \eqref{eq:omega3}, hence the re-scaled
  frequency map $Ω$ is R-non-degenerate.

  This demonstrates R-non-degeneracy for the thermostated hamiltonian
  except when $λ=1$ (the thermostated harmonic oscillator) or
  $n=1$. When $n=1$ and $λ ≠ 1$, an R-degenerate generalized
  variable-mass thermostat of order $2$ must have the variable mass
  function $Ω_2$ described by \eqref{eq:omega4}. In particular, the
  Nos{é} and logistic thermostats are R-non-degenerate in this case,
  too, while the generalized Winkler thermostat is R-non-degenerate
  unless $λ = 2 - 1/\e$~(see~p.~\pageref{it:winkler}).
\end{remark}

\subsection{Main Corollary}
\label{sec:main-cor}

Let $X$ be a real-analytic manifold and $C^{k}(X)$ be the space of
real-valued functions $f : X → \R$ which are continuous with $k ≥ 0$
continuous derivatives; $C^{∞}(X) = \bigcap_{k ≥ 0} C^{k}(X)$ and
$C^{ω}(X) ⊂ C^{∞}(X)$ is the set of real-analytic functions. Besides
the intrinsic direct limit topology, there are many topologies on the
space of real-analytic functions $C^{ω}(X)$ since
$C^{ω} ⊂ C^{∞} ⊂ C^{k}$ for all $k ≥ 0$. Let us use the subspace
topology from the uniform $C^{k}$ topology. In this case, a basic open
set consists of a compact set $K ⊂ X$ and an open set
$U ⊂ \oplus_{i=0}^k \R^{d_i}$ such that the map $x \mapsto J_k(x)$,
$J_k(x) = (f(x),\D{f}_x, \ldots, \D{}^kf_x)$, maps $K$ into $U$ ($d_i$
is the dimension of the linear space of symmetric tensors of degree
$i$ on $\R^n$). When $X$ is symplectic, there is a distinguished
subset $𝔍$ of completely integrable hamiltonian functions and $𝔍^k = 𝔍
∩ C^{k}(X)$ for $k ∈ \set{2,\ldots,∞,ω}$. We can equip $𝔍^{ω}$ with
the $C^{k}$ subspace topology. 

\begin{corollary}
  \label{cor:main-cor}
  Let $H ∈ 𝔍^{ω}$ be completely integrable and $N=N_{a,T}$ be a
  generalized variable-mass thermostat of order $2$. Assume that
  \begin{enumerate}
  \item $T$ is a regular value of the orbit mean temperature function
    $κ$ of $H$;
  \item The function $\hat{G}_{22}$ is non-negative.
  \end{enumerate}
  Then, there is a set $𝔘 ⊂ 𝔍^{ω}$ that is relatively open in the
  uniform $C^2$ topology such that
  \begin{enumerate}
  \item $H$ is in the closure of $𝔘$;
  \item If $H' ∈ 𝔘$, then the re-scaled frequency map $Ω'$ of $𝐇'$ is
    R-non-degenerate.
  \end{enumerate}
\end{corollary}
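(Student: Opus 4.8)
The plan is to take $\mathfrak U$ to be the $C^2$-interior of the set of $H'\in\mathfrak J^{\omega}$ whose re-scaled frequency map $\Omega'=(\omega',\alpha')$ is defined, and to reach $H$ from this set by perturbations performed inside one action-angle chart. Here $\omega'=\hat G'_1(\cdot;1/\hat s'_o)$ is the internal frequency map on the open set $U'\subset\R^n$ parametrising the relevant neighbourhood of $\thermostateqm{\hat{T}}$. First I would record a cheap sufficient criterion for R-non-degeneracy: say $(\star)$ holds for $H'$ at $I_0\in U'$ if $\omega'$ is a local diffeomorphism at $I_0$ and
\[
\det\bigl[\,\Omega'(I_0)\ \big|\ \partial_1\Omega'(I_0)\ \big|\ \cdots\ \big|\ \partial_n\Omega'(I_0)\,\bigr]\neq0 ,
\]
equivalently $\alpha'(I_0)\neq D\alpha'(I_0)\,D\omega'(I_0)^{-1}\,\omega'(I_0)$. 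By real-analyticity each $\partial_j\Omega'(I_0)$ lies in the linear span of $\Omega'(U')$, so $(\star)$ makes $\Omega'(U')$ span $\R^{n+1}$; hence $(\star)$ implies R-non-degeneracy. Because $(\star)$ is the non-vanishing of an explicit analytic function of the $1$-jet of $\Omega'$ at one point, and $\Omega'$ depends continuously on $H'$ in the $C^2$ topology on compacta (two $\sigma$-derivatives of $\sigma\mapsto H'(q,\sigma p)$ cost two fibre derivatives of $H'$, and the action-angle reduction of Lemma~\ref{lem:normal-form-near-T} then depends continuously on the outcome), $(\star)$ at $I_0$ is $C^2$-open; hypotheses (1)--(2), carried by $H$, likewise persist under small perturbations, keeping $\Omega'$ defined nearby. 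Thus $\mathfrak U:=\{H'\in\mathfrak J^{\omega}:(\star)\text{ holds for }H'\text{ somewhere}\}$ is $C^2$-open and sits inside the R-non-degenerate locus, and it remains to show $H\in\overline{\mathfrak U}$.

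For the approximation, fix a toroidal cylinder $V$ with angle-action variables $(\theta,I):V\to\T^n\times B$ and $H|V=G\circ I$. For any real-analytic $g:B\to\R$, the Hamiltonian $H_g:=(G+g)\circ I$ is completely integrable with the same integrals $I$, and $\|H_g-H\|_{C^2}\lesssim\|g\|_{C^2}$ on compacta; so it suffices to produce $g$ of arbitrarily small $C^2$-norm for which $(\star)$ holds. Since the reparametrisation $\Psi_\sigma$ (with $\Psi_1=\mathrm{id}$) built in the proof of Lemma~\ref{lem:normal-form-near-T} is determined by the symplectic structure and the momentum rescaling, not by $G$, perturbing $G\mapsto G+g$ sends $\hat G(\cdot;\sigma)$ to $\hat G(\cdot;\sigma)+\hat g(\cdot;\sigma)$ with $\hat g(I;\sigma)=g(\Psi_\sigma(I))$; thus $\hat g_1,\hat g_2$ and $\delta\sigma_o[g]=-\hat g_2(\cdot;\sigma_o)/(\hat G_{22}+\hat s_o^2T)$ depend on $\nabla g$ alone, while $\hat g_{22}=\langle\nabla g(\Psi_\sigma),\partial_\sigma^2\Psi_\sigma\rangle+\langle\partial_\sigma\Psi_\sigma,D^2g(\Psi_\sigma)\,\partial_\sigma\Psi_\sigma\rangle$ also records $D^2g$. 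Now fix any $I_0\in\kappa^{-1}(T)$, put $\sigma_0:=1/\hat s_o(I_0)$, $\widetilde I_0:=\Psi_{\sigma_0}(I_0)$, $v_0:=\partial_\sigma\Psi_\sigma(I_0)|_{\sigma=\sigma_0}$, and work with the quadratics $g_S(I)=\tfrac12\langle I-\widetilde I_0,\,S(I-\widetilde I_0)\rangle$, $S$ symmetric. For these $\nabla g_S(\widetilde I_0)=0$, hence $\delta\sigma_o[g_S](I_0)=0$, so the first variations at $I_0$ collapse to $\delta\omega[g_S](I_0)=0$ and $\delta\alpha[g_S](I_0)=c_0\langle v_0,S\,v_0\rangle$, where $c_0=\Omega_2(\hat s_o)\big/\!\bigl(2\hat s_o^2\sqrt{\Omega_2(\hat s_o)(\hat G_{22}+\hat s_o^2T)}\bigr)\big|_{I_0}>0$ by hypothesis (2) and $T>0$, and $v_0\neq0$ because $\langle\nabla G(\widetilde I_0),v_0\rangle=\hat G_2(I_0;\sigma_0)=\hat s_o(I_0)T>0$; moreover $\delta(D\omega)[g_S](I_0)$ and $\delta(D\alpha)[g_S](I_0)$ are linear in $S$.

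I would then perturb in two strokes. First, $\delta(D\omega)[g_S](I_0)=(D\Psi_{\sigma_0})^{\top}S\,(D\widetilde I)(I_0)+(\text{further }S\text{-linear terms})$ sweeps, as $S$ varies, a linear space of $n\times n$ matrices which — after at most a harmless preliminary perturbation putting the transport data in general position — contains invertible ones; so a generic small quadratic makes $\omega$ a local diffeomorphism at $I_0$. Fix the resulting integrable $G_1$; this condition persists under further small perturbation, and if $(\star)$ already holds at $I_0$ we are done. Otherwise $\det M_{G_1}(I_0)=0$, and we perturb once more by $g_S$. Expanding $\det M_{G_1+g_S}(I_0)$ to first order in $S$ and using $\delta\omega[g_S](I_0)=0$, the contribution of the perturbation of the first column $\Omega(I_0)$ is, up to sign, $\delta\alpha[g_S](I_0)\cdot\det D\omega_{G_1}(I_0)=c_0\langle v_0,S\,v_0\rangle\,\det D\omega_{G_1}(I_0)$, which is nonzero for $S=v_0v_0^{\top}$; the remaining first-order contributions come from the perturbations $\delta\partial_j\Omega(I_0)$ of the other columns and are again $S$-linear. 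The goal is then to conclude that $S\mapsto\det M_{G_1+g_S}(I_0)$ — real-analytic near $S=0$, since the radicand $\Omega_2(\hat s_o)(\hat G_{22}+\hat s_o^2T)$ stays positive and $\alpha$ does not branch — is not identically zero, whence a generic arbitrarily small $S$ yields $(\star)$ for $(G_1+g_S)\circ I$ and so $H\in\overline{\mathfrak U}$.

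The hard part is precisely that last conclusion: excluding that for every $S$ the favourable term $c_0\langle v_0,S\,v_0\rangle\det D\omega_{G_1}(I_0)$ is cancelled by the cofactor terms $\det[\,\Omega(I_0)\mid\cdots\mid\delta\partial_j\Omega(I_0)\mid\cdots\,]$, i.e. proving that the linear functional $S\mapsto\frac{d}{d\epsilon}\big|_{0}\det M_{G_1+\epsilon g_S}(I_0)$ is not the zero functional. My plan here is to exploit the freedom still available — the choice of $I_0\in\kappa^{-1}(T)$ together with a further generic preliminary perturbation of $G_1$ — to place the transport data $D\Psi_{\sigma_0}(I_0)$, $D\widetilde I(I_0)$, $\partial_\sigma\Psi_{\sigma_0}(I_0)$ and the low-order $\sigma$-derivatives of $\hat G$ at $(I_0,\sigma_0)$ in sufficiently general position that the cancellation becomes a polynomial identity in the symmetric variable $S$ whose coefficient carrying $\det D\omega_{G_1}(I_0)\neq0$ is manifestly not annihilated; each ``general position'' requirement is the non-vanishing of a real-analytic function on the perturbation space, and these loci are non-vacuous — the weighted-homogeneous examples of Remark~\ref{rem:normal-form-near-T-example-contd} with $\lambda\neq1$ already satisfy $(\star)$ at most points — so one expects the conclusion off a proper analytic subvariety of any finite-dimensional family of perturbations. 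It is exactly here that hypothesis (2), $\hat G_{22}\ge0$, is indispensable: it keeps $\alpha$, hence $\Omega$, hence the entire obstruction, real-analytic in the perturbation parameter near $0$, so that ``not identically zero'' upgrades to ``nonzero for arbitrarily small perturbations.''
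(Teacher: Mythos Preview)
Your route is genuinely different from the paper's, and it stalls at exactly the step you flag as ``the hard part''.

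The paper does not perturb $G(I)$. It perturbs $\hat G(I;s)$ directly by
\[
\hat g(I,s)=\tfrac12\bigl(s-\hat s_o(I)\bigr)^2\,\Phi(I),
\]
for a free real-analytic $\Phi\ge 0$. Because $\hat g$ vanishes to second order along $s=\hat s_o$, the perturbed family keeps the \emph{same} equilibrium function $\hat s_o$ and the \emph{same} internal frequency $\omega=\hat G_1(\cdot;1/\hat s_o)$; only $\hat G_{22}$, and hence $\alpha$, moves, with $\hat G_{22}\mapsto \hat G_{22}+\eta\Phi$. This decoupling is the whole trick. If the perturbed map $\Omega^{(\eta)}=(\omega,\alpha^{(\eta)})$ were R-degenerate for every $\eta$, then writing the normal $(w_\eta,-1)$ and letting $\eta\to\infty$ in the resulting identity forces $\Phi=\Omega_2(\hat s_o)\big/\kp{r}{\hat G_1}^2$ for some fixed $r$; one simply chooses $\Phi$ outside the ring generated by $\Omega_2(\hat s_o)$ and the components of $\hat G_1$ to obtain a contradiction. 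No determinant at a single point is needed and no cancellation has to be excluded. The perturbation is then globalised as $\mathbf H+\psi(F\circ\varphi_{1/s},s)$.

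Your scheme restricts to perturbations $g(I)$ of $G$ alone, so $\hat g(I;\sigma)=g(\Psi_\sigma(I))$, and you can\-not vary $\hat G_{22}$ independently of $\hat G_1$ and $\hat s_o$. That loss of decoupling is precisely why you are forced into the linear-cancellation problem for $S\mapsto\det M_{G_1+g_S}(I_0)$, and you have not resolved it: ``general position'' and ``one expects'' are not an argument. Knowing from Remark~\ref{rem:normal-form-near-T-example-contd} that the R-non-degenerate locus is nonempty does not show that \emph{your} quadratic family $g_S$ hits it near $S=0$ for \emph{this} $H$; that is exactly the missing step, and it does not follow from real-analyticity alone. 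If you want to repair your approach rather than switch to the paper's, the cleanest fix is to enlarge the perturbation class so that it acts on $\hat G$ (equivalently on $\mathbf H$) and allows second-order vanishing at $s=\hat s_o$; then $\omega$ is frozen, $\alpha$ is free, and the cancellation issue disappears.
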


\begin{proof}
  The proof of this corollary is straightforward. Consider a
  ``semi-global'' perturbation of $\hat{G}$ on $W × \R^+$ that is of
  the form $\hat{g}(I,s) = ½ (s-\hat{s}_o)^2 Φ(I)$ where $Φ$ is a
  non-negative, real-analytic function and let
  $\hat{G}^{(η)} = \hat{G} + η \hat{g}$ where $η$ is a
  parameter. Assume that $Φ$ does not satisfy any linear equation with
  coefficients in the ring over $\R$ generated by $Ω_2(\hat{s}_o)$ and
  the components of $\hat{G}_1$. The functions $\hat{G}^{(η)}$ share
  the same thermostatic equilibrium scaling function $\hat{s}_o$ by
  construction. By choosing $Φ$ appropriately, it is clear that one
  may assume that $\hat{G}^{(η)}_{11}$ is non-degenerate at some point
  in $\thermostateqm{\hat{T}}$ for all $η > 0$ sufficiently
  small. Thus, suppose that for each $η$ there is a vector
  $v_{η} = (w_{η},-1)$ such that the perturbed, re-scaled frequency
  map $Ω^{(η)}$ lies in the linear subspace orthogonal to
  $v_{η}$. Then
  \begin{equation}
    \label{eq:r-non-degen-fun-rel}
    Ω_2(\hat{s}_o) = \dfrac{ \hat{s}_o^4 \, \left( \ip{w_{η}/η}{\hat{G}_1} \right)^2 }{ \hat{G}_{22}/η + \hat{g}_{22} + \hat{s}_o^2 T/η },
  \end{equation}
  must hold identically for $η > 0$. This implies that, as $η → ∞$,
  $\ip{w_{η}/η}{\hat{G}_1}$ must converge to a function of $I$; and
  so, since $\hat{G}_1$ is a local diffeomorphism, $w_{η}/η$ converges
  to a fixed vector $r$ as $η → ∞$. Therefore,
  \begin{equation}
    \label{eq:r-non-deg-pert}
    Φ = \dfrac{ Ω_2(\hat{s}_o) }{ \ip{r}{\hat{G}_1}^2 }.
  \end{equation}
  Thus $Φ$ satisfies a linear equation in the ring over $\R$ generated
  by $Ω_2(\hat{s}_o)$ and the coefficients of
  $\hat{G}_1$. Contradiction.

  Finally, it has been shown that there are specific perturbations
  that are ``semi-global'' and R-non-degenerate. For a global
  perturbation of $𝐇$, one can take $𝐇 + ψ(F ⎄ φ_{1/s}, s)$ where $ψ$
  is real-analytic, $F$ is the first-integral map of $H$, and $ψ$ is
  chosen as a sufficiently close approximation to a $\hat{g}$ from the
  preceding paragraph.
\end{proof}

\section{An example: rotationally-symmetric potentials}
\label{sec:llm-ex}

Let's consider examples of rotationally-symmetric mechanical
Hamiltonians on a surface $Σ =\R^2$ or
$\sphere 2$. In~\cite[section~4]{MR2519685}, Legoll, Luskin and
Moeckel consider the case where the hamiltonian is
\begin{equation}
  \label{eq:llm-rot-inv-ham}
  H(r,p_r,θ,p_{θ}) = ½ \left( p_r^2 + r^{-2} p_{θ}^2 \right) + v(r)
\end{equation}
in symplectic polar coordinates on $\cotangent \R^2$.

In addition to the general result on the existence of integrals for
the averaged Nos{é}-Hoover thermostat (see Theorem~\ref{thm:llm-ave}
above), for the specific potential $v(r)=r^2+r^4$, they show numerical
evidence that, for ``small'' $a \sim 10^{-2}$ and
$T=1$,\footnote{Recall that $a$ in the present paper is $1/Q$ in
  \cite{MR2519685} and $ε^2$ in~\eqref{eq:nose-hoover}.} the
Nos{é}-Hoover thermostated system~\eqref{eq:nose-hoover} is nearly
integrable and the averaged system has $2$ independent integrals,
which corresponds to $3$ independent integrals for the
Nos{é}-thermostated hamiltonian $𝐇$~\eqref{eq:nose}.

\subsection{The case of $\R^2$}
\label{sec:r2}

Let's consider the case where $H$~\eqref{eq:llm-rot-inv-ham} is a
rotationally-invariant, real-analytic mechanical
hamiltonian. Thermostat $H$ with a real-analytic generalized,
variable-mass thermostat of order $2$, to get:
\begin{equation}
  \label{eq:llm-rot-inv-therm-r2}
  𝐇(r,p_r,θ,p_{θ},s,S) = ½ \left( p_r^2 + r^{-2} p_{θ}^2 \right)/s^2 + v(r) + F(s,aS)/a + T \ln s,
\end{equation}
where $F(s,S)$ is described
in~\eqref{eq:order-2-thermostat-generalized}.

Recall the two hypotheses from the statement of
Theorem~\ref{thm:llm-rot-inv} about the potential $v$ that are assumed
to hold on some open interval in $\R^+$:

\begin{enumerate}
\item[H1.] The function $v'(r) > 0$;
\item[H2.] The function $r v''(r) + v'(r) > 0$.
\end{enumerate}

\begin{lemma}
  \label{lem:llm-rot-inv-therm-cov}
  Assume H1 and H2 hold on an open interval $J ⊂ \R^+$.
  
  Let $\rs = \rs(p_{θ}/s;T)$ and $s_o = s_o(p_{θ};T)$ be
  real-analytic, scalar functions of a single variable parameterized
  by the temperature $T$. Define the generating function
  \begin{equation}
    \label{eq:llm-rot-inv-therm-gen-fun}
    ν(ρ,p_r,\hat{θ},p_{θ},u,S) = p_r · \rs · (1+ρ) + \hat{θ} · p_{θ} + S · s_o / (1-u),
  \end{equation}
  which generates the symplectic map $f$
  \begin{align}
    \label{eq:llm-rot-inv-therm-map}
    r     & = \rs · (1+ρ), &  &  & p_r & = p_{ρ} / \rs, &  &  & θ & = \hat{θ} + ξ,    \\\notag
    p_{θ} & = p_{\hat{θ}}  &  &  & s   & = s_o / (1-u), &  &  & S & = (1-u)^2 U / s_o \\\notag
          &                &  &  &     &                &  &  &   & - p_{ρ} · \rs' · (1+ρ) · (1-u)^2 / (\rs · s_o^2),
  \end{align}
  where $ξ$ is a real-analytic function and $\rs$ and its derivative
  are evaluated at $p_{θ}(1-u)/s_o$.

  Then, $𝐆 = 𝐇 ⎄ f$ has a relative critical point at $ρ=u=p_{ρ}=U=0$
  if there is an $\rs ∈ J$ such that the following hold:
  \begin{align}
    \label{eq:llm-rot-inv-condns}
    T & = \rs · v'(\rs), &  &  & τ & = |p_{θ}| / s_o, &  &  & \textrm{and } τ^2 & = \rs^3 · v'(\rs).
  \end{align}
\end{lemma}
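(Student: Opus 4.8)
The plan is to substitute the symplectomorphism $f$ of \eqref{eq:llm-rot-inv-therm-map} into $𝐇$, write $𝐆 = 𝐇 \circ f$ explicitly, and verify that all first partials of $𝐆$ in the non-cyclic variables $(ρ,p_{ρ},u,U)$ vanish at the origin $P_0$. First note that $\hat{θ}$ does not occur in $𝐆$ (the Hamiltonian $𝐇$ is $θ$-independent, and the $r,p_r,s,S,p_{\hat{θ}}$ components of $f$ do not involve $\hat{θ}$), so $p_{\hat{θ}}$ is conserved and ``relative critical point'' means a critical point of $𝐆$ at fixed $p_{\hat{θ}}$. Writing $\eta := p_{θ}(1-u)/s_o$ for the argument at which $\rs$ and $\rs'$ are evaluated, substitution gives
\[
  𝐆 = \frac{(1-u)^2}{2\,\rs(\eta)^2\, s_o^2}\Bigl( p_{ρ}^2 + \frac{p_{θ}^2}{(1+ρ)^2} \Bigr) + v\bigl(\rs(\eta)(1+ρ)\bigr) + \tfrac1a F(s,aS) + T\ln s,
\]
with $s = s_o/(1-u)$ and $S$ as in \eqref{eq:llm-rot-inv-therm-map}. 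At $P_0$ one has $\eta = \eta_0 := p_{θ}/s_o$, the value $\rs(\eta_0)$ is the $\rs$ of \eqref{eq:llm-rot-inv-condns}, $s = s_o$, and, crucially, $S = 0$.

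Next I would observe that the thermostat contributes nothing to the gradient at $P_0$. Because $F_S$ and $F_s$ vanish identically along $\R^+ \times \{0\}$ (part of Definition~\ref{def:order-2-thermostat-generalized}) and $S = 0$ at $P_0$, every chain-rule term $F_S\,\partial_{\bullet}S$ or $F_s\,\partial_{\bullet}s$ arising from $\tfrac1a F(s,aS)$ dies there; the only surviving thermostat contribution is $\partial_u(T\ln s)|_{P_0} = T$. Since $U$ enters $𝐆$ only through $S$, this already gives $\partial_U 𝐆|_{P_0}=0$, and since $p_{ρ}=0$ at $P_0$ the $p_{ρ}$-derivative of the kinetic term gives $\partial_{p_{ρ}}𝐆|_{P_0}=0$.

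It then remains to handle $\partial_ρ 𝐆|_{P_0}$ and $\partial_u 𝐆|_{P_0}$. The kinetic prefactor is $ρ$-independent, so $\partial_ρ 𝐆|_{P_0} = -\,p_{θ}^2/(\rs^2 s_o^2) + \rs\,v'(\rs)$, which is $0$ once one uses $p_{θ}^2/s_o^2 = τ^2 = \rs^3 v'(\rs)$ from \eqref{eq:llm-rot-inv-condns}. For $\partial_u$, one must carry the $u$-dependence hidden in $\eta$ through $\rs(\eta)$ and $\rs'(\eta)$, with $\partial_u\eta = -p_{θ}/s_o = -\eta_0$; the two terms proportional to $\rs'(\eta_0)$ coming from the kinetic and potential pieces cancel (again by \eqref{eq:llm-rot-inv-condns}), so $\partial_u 𝐆|_{P_0}$ equals $-\rs\,v'(\rs)$ from the mechanical part plus $T$ from the thermostat, which vanishes by the temperature condition $T = \rs\,v'(\rs)$. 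Assembling the four identities shows $P_0$ is a relative critical point of $𝐆$.

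The computation is routine; hypotheses H1--H2 enter only to make $\rs(\cdot;T)$ and $s_o(\cdot;T)$ well-defined real-analytic functions and the three equations in \eqref{eq:llm-rot-inv-condns} mutually consistent. The one point that needs care is the bookkeeping of the chain rule through $\eta = p_{θ}(1-u)/s_o$, together with checking that the term $-p_{ρ}\rs'(1+ρ)(1-u)^2/(\rs\, s_o^2)$ in the definition of $S$ forces $S$ to vanish at $P_0$; that term is present precisely so that the thermostat does not disturb the $\partial_ρ$ and $\partial_u$ identities above.
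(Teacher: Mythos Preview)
Your proof is correct and follows essentially the same route as the paper: both compute the four partials $\partial_{ρ}𝐆,\partial_{u}𝐆,\partial_{p_{ρ}}𝐆,\partial_{U}𝐆$ at $P_0$ (the paper phrases this as checking $\dot{ρ}=\dot{u}=\dot{p}_{ρ}=\dot{U}=0$) and reduces them to the three scalar relations~\eqref{eq:llm-rot-inv-condns}. Your handling of the thermostat term is in fact a bit more careful than the paper's---the paper asserts ``$𝐇$ is quadratic in $p_r$ and $S$'', which is literally true only for Nos\'e, whereas you correctly invoke $F_S|_{S=0}=F_s|_{S=0}=0$ from Definition~\ref{def:order-2-thermostat-generalized} to kill the thermostat contributions in the general case.
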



\begin{proof}
  Let $U=p_{ρ}=u=ρ=0$. Since both $p_r$ and $S$ are linear in $p_{ρ}$
  and $U$ and $𝐇$ is quadratic in $p_r$ and $S$,
  $\dot{u}=0=\dot{ρ}$. Assume that $τ=p_{θ}/s_o$ and $p_{θ}>0$ (in
  case $p_{θ}<0$, one takes $τ=-p_{θ}/s_o$). Then one computes:
  \begin{align}
    \label{eq:llm-rot-inv-diffg}
    \dot{p}_{ρ} & = -𝐆_{ρ} = \left( τ/\rs \right)^2 - \rs · v'(\rs), \\\notag
    \dot{U}     & = -𝐆_u = \left( τ · \rs^3 · \rs' · v'(\rs) - τ^3 · \rs' - T \rs^3 + τ^2 · \rs \right)/s_o^3.
  \end{align}
  The first equation implies the last equation
  of~\eqref{eq:llm-rot-inv-condns} given the middle one. By hypotheses
  H1 \& H2, the function $r \mapsto r^3 · v'(r)$ is increasing on $J$
  and therefore the function $τ = \sqrt{ r^3 · v'(r) }$ is increasing
  on $J$. Hence, there is a local inverse $\rs = \rs(τ)$.

  The second equation of~\eqref{eq:llm-rot-inv-diffg} implies that
  \[
    T = \left( τ^2 · \rs - τ^3 · \rs' + τ · \rs^3 · \rs' · v'(\rs) \right)/\rs^3 = τ^2 /\rs^2 = \rs · v'(\rs),
  \]
  where the inverse function theorem has been applied to $\rs=\rs(τ)$
  to simplify the expression in parentheses. This proves the lemma.
\end{proof}

By virtue of the previous lemma, one can define the functions $τ(r) =
\sqrt{r^3 · v'(r)}$ and $T(r) = r · v'(r)$ for $r ∈ J$. By H1, $T(r) >
0$ and by H2, $T'(r) > 0$. Hence, there is a single-valued inverse
$r(T)$ for $T ∈ T(J)$. This implies that the
equations~\eqref{eq:llm-rot-inv-condns} determine a unique value for
$\rs = r(T)$ and a unique value $τ = τ(T)$. So, the middle equation
defines
\begin{equation}
  \label{eq:llm-rot-inv-s0}
  s_o(p_{θ};T) = \dfrac{|p_{θ}|}{τ(T)},
\end{equation}
and $\rs = \rs(τ(T);T)$, also.

Henceforth, it is assumed that $\rs$, $s_o$ and $τ$ are determined as
in lemma~\ref{lem:llm-rot-inv-therm-cov}.

\subsubsection{Symplectic reduction}
\label{sec:llm-rot-inv-sympred}

Let's fix the value $p_{θ}=μ≠0$ and reduce the hamiltonian $𝐆$ modulo
the rotational action by translation of $\hat{θ}$. The symplectic
reduction of $\set{p_{θ}=μ} ⊂ \cotangent (\R^2 × \R^+)$ by this free
action of $\SO{2}$ is a symplectic manifold $X_{μ}$ that is
symplectomorphic to $\cotangent(\R^+ × \R^+)$. The Darboux coordinates
$(ρ,p_{ρ},u,U)$ are defined on a neighbourhood of the reduced critical
point $(r=\rs,p_r=0,s=s_o,S=0)$ of the reduced hamiltonian $𝐆_{μ}$
(obtained from $𝐆$ by fixing $p_{θ}=μ$).

\begin{lemma}
  \label{lem:llm-rot-inv-d2g-red}
  The linearized hamiltonian vector field of $𝐆_{μ}$ at the critical point $ρ=u=p_{ρ}=U=0$ is
  \begin{equation}
    \label{eq:llm-rot-inv-d2g-red}
    \dot{𝐗} = 𝐀 \, 𝐗 =
    \begin{pmatrix}
      &&& A & B \\ &&& B & E \\ -C & \phantom{-}0 \\ \phantom{-}0 & -D
    \end{pmatrix} \, 𝐗
  \end{equation}
  where $𝐗 = [ρ,u,p_{ρ},U]$ and (with $r=\rs$, $W=(r v'' + 3 v')$)
  \begin{align}
    \label{eq:llm-rot-inv-sympred-d2g-coeffs}
    C & = r W,                                                          &  &  & D & = 2 r v' (W - 2 v')/W,  \\\notag
    A & = \dfrac{1}{(r s_o)^2} + 4a Ω_2(s_o) \dfrac{(v')^2}{s_o^2 W^2}, &  &  & B & = -2a Ω_2(s_o) \dfrac{v'}{s_o^2 W}, &  &  & E & = a Ω_2(s_o)/s_o^2.
  \end{align}
\end{lemma}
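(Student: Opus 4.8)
The plan is to compute the Hessian of the reduced hamiltonian $𝐆_{μ}$ at the critical point $ρ=u=p_{ρ}=U=0$ in the Darboux coordinates $(ρ,p_{ρ},u,U)$ and read off the linearized vector field. Since the linear hamiltonian vector field attached to a symmetric matrix $\mathbf{M}$ in these coordinates is $\dot{𝐗}=𝐉\mathbf{M}𝐗$ with $𝐉=\left(\begin{smallmatrix}0&I\\-I&0\end{smallmatrix}\right)$ in the ordering $𝐗=[ρ,u,p_{ρ},U]$, matching the asserted form of $𝐀$ amounts to proving that $\hessian(𝐆_{μ})$ at the origin is the block-diagonal matrix with $(ρ,u)$-block $\left(\begin{smallmatrix}C&0\\0&D\end{smallmatrix}\right)$ and $(p_{ρ},U)$-block $\left(\begin{smallmatrix}A&B\\B&E\end{smallmatrix}\right)$. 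That the origin is a critical point of $𝐆$, hence of $𝐆_{μ}$ after fixing $p_{θ}=μ$ and reducing the cyclic variable $\hat{θ}$, is exactly Lemma~\ref{lem:llm-rot-inv-therm-cov} under the standing hypotheses~\eqref{eq:llm-rot-inv-condns}; what remains is to extract the quadratic part. I take $μ>0$ throughout, so that $τ=μ/s_o$.

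The structural step is to substitute the map~\eqref{eq:llm-rot-inv-therm-map} into $𝐇$~\eqref{eq:llm-rot-inv-therm-r2} and fix $p_{θ}=μ$. There $p_r=p_{ρ}/\rs$ and $S$ are affine in $(p_{ρ},U)$ with coefficients depending only on $(ρ,u)$, and both vanish at the origin; moreover $𝐇$ is a polynomial of degree $≤2$ in $(p_r,S)$, and $F(s,aS)/a=½aΩ_2(s)S^2+O(S^3)$ near $S=0$ because $F(s,0)=F_S(s,0)=0$ and $F_{SS}(s,0)=Ω_2(s)$ (see~\eqref{eq:order-2-thermostat-generalized}). Hence $𝐆_{μ}=½p_r^2/s^2+½aΩ_2(s)S^2+Φ(ρ,u)+(\textrm{cubic and higher order in }(p_{ρ},U))$, where $Φ(ρ,u)=½μ^2/(r^2s^2)+v(r)+T\ln s$ is the reduced effective potential with $r=\rs(1+ρ)$, $s=s_o/(1-u)$ (the $\ln s$ and $F(s,0)$ pieces carry no momentum). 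The cubic remainder does not affect $\hessian$ at the origin; and because the position variables enter the momentum-quadratic part only through coefficients whose $ρ$- and $u$-derivatives get multiplied by the vanishing quantities $p_r$ and $S$, the $(ρ,u)$--$(p_{ρ},U)$ cross-block of $\hessian(𝐆_{μ})$ vanishes. So $\hessian(𝐆_{μ})$ is block-diagonal: the $(ρ,u)$-block is $\hessian Φ$ at the origin, and the $(p_{ρ},U)$-block is the constant Hessian of $½p_r^2/s_o^2+½aΩ_2(s_o)S^2$ with $p_r,S$ replaced by their parts linear in $(p_{ρ},U)$.

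It then remains to compute the two blocks. For $\hessian Φ$ I would write $r=\rs(ζ(u))(1+ρ)$ with $ζ(u)=μ(1-u)/s_o$ and $s=s_o/(1-u)$, differentiate twice, and feed in the critical-point relations $T=\rs v'(\rs)$, $τ^2=\rs^3v'(\rs)$, $τ=μ/s_o$ together with the inverse-function identity $\rs^2W\rs'(τ)=2τ$, which follows from differentiating $τ(r)^2=r^3v'(r)$ (so $τ'=r^2W/(2τ)$, $W=rv''+3v'$): the first derivatives vanish (re-confirming criticality), the $ρu$ second derivative vanishes, and the surviving entries collapse to $Φ_{ρρ}=rW=C$ and $Φ_{uu}=2rv'(W-2v')/W=D$. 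For the momentum block, at the origin the linear parts are $p_r=p_{ρ}/\rs$ and $S=U/s_o+β\,p_{ρ}$ with $β$ read off from~\eqref{eq:llm-rot-inv-therm-map} and simplified via $\rs^2W\rs'(τ)=2τ$ and $τ=μ/s_o$; then $½p_r^2/s_o^2$ contributes $1/(\rs s_o)^2$ to the $(p_{ρ},p_{ρ})$ slot, and $½aΩ_2(s_o)S^2$ contributes $aΩ_2(s_o)/s_o^2=E$ in the $(U,U)$ slot, $-2aΩ_2(s_o)v'/(s_o^2W)=B$ off-diagonal, and $4aΩ_2(s_o)(v')^2/(s_o^2W^2)$ in the $(p_{ρ},p_{ρ})$ slot, whose sum with the kinetic contribution is $A$. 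Assembling, $𝐀=𝐉\hessian(𝐆_{μ})$ is the stated matrix.

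The hard part is bookkeeping rather than ideas: I must carry the chain rule cleanly through the nested substitutions $r=\rs(ζ(u))(1+ρ)$, $s=s_o/(1-u)$ and the implicitly defined function $\rs(\cdot)$, and then repeatedly re-substitute the critical-point relations and the identity $\rs^2W\rs'=2τ$ to collapse long expressions into the compact $W$-formulas. I also have to be disciplined about what survives differentiation at the origin — only the $S^2$-term of $F(s,aS)$, and inside each quadratic form only the products of first derivatives, because $p_r=S=0$ there — which is precisely what makes the cross-block vanish and the final entries fall out.
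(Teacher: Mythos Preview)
Your approach is correct and is exactly the direct Hessian computation one must carry out; the paper in fact states this lemma without proof, treating it as a routine (if laborious) calculation, so there is nothing to compare against beyond confirming that your plan yields the stated coefficients.

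Two small remarks. First, your block-diagonality argument is sound: since $p_r$ and $S$ are linear in $(p_\rho,U)$ with $(\rho,u)$-dependent coefficients and vanish at the origin, every mixed second partial $\partial^2/\partial\rho\,\partial p_\rho$, $\partial^2/\partial u\,\partial U$, etc., of the momentum-quadratic piece picks up a factor of $p_r$ or $S$ and dies; and $\Phi$ is independent of $(p_\rho,U)$. Second, when you evaluate $\beta$ from the transformation formula for $S$ in~\eqref{eq:llm-rot-inv-therm-map}, be careful with the implicit $p_\theta$-dependence of $r_o$ in the generating function (since $r_o$ is evaluated at $p_\theta(1-u)/s_o$): depending on how you track that factor you may find the sign of $B$ opposite to what is printed. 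This is immaterial downstream, since only $B^2$ enters the characteristic polynomial~\eqref{eq:llm-rot-inv-char-poly}, but it is worth noting so you do not think your calculation has gone wrong.
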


The characteristic polynomial of $𝐀$ is
\begin{equation}
  \label{eq:llm-rot-inv-char-poly}
  p(x) = x^4 + (DE+AC) x^2 + CD(AE-B^2).
\end{equation}
For $a>0$, the hessian of $𝐆_{μ}$ at the critical point is positive
definite, by H1 \& H2. Therefore, let $± i ω_1, ± i ω_2$ be the purely
imaginary roots of $p$, with $ω_2 ≥ ω_1 > 0$, and let $η =
ω_1/ω_2$. The function $η = η(μ; a, T)$ is continuous everywhere and
real-analytic except at the points where $η=1$.

\begin{lemma}
  \label{lem:llm-rot-inv-r-non-deg}
  The following hold:
  \begin{enumerate}
  \item If, for fixed $a, T > 0$, $η=η(μ; a, T)$ is a non-constant
    function of $μ=p_{θ}$, then there exists a full-measure set, $𝔐$,
    of $μ$ and a neighbourhood $O$ of the critical point of $𝐆_{μ}$
    such that for each $μ ∈ 𝔐$, there exists a positive-measure set of
    invariant tori $𝔗_{μ} ⊂ O$; or
  \item For each $T>0$ and $μ ≠ 0$, there exists a full-measure set
    $𝔄 ⊂ \R^+$ such that if $a ∈ 𝔄$, then there is a neighbourhood $O$
    of the critical point of $𝐆_{μ}$ and a positive-measure set of
    invariant tori $𝔗_{a} ⊂ O$.
  \end{enumerate}
\end{lemma}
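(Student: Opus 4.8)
The plan is to put $\mathbf{G}_{\mu}$ into partial Birkhoff normal form near its critical point, feed the result to the properly degenerate KAM theorem of the final section (Theorem~\ref{thm:modified-chierchia-pusateri}), and then reduce the hypothesis of that theorem --- R-non-degeneracy of a re-scaled frequency map --- to the non-constancy of $\eta$. Crucially I would keep $p_{\theta}=\mu$ as a genuine action rather than symplectically reducing, so that a non-degenerate action direction is available and the Birkhoff torsion in the oscillator directions need not be computed. Thus I regard the relative equilibrium $\rho=u=p_{\rho}=U=0$ as one member of a one-parameter family of elliptic equilibria of $\mathbf{G}_{\mu}$, with transverse frequencies $0<\omega_{1}(\mu)\le\omega_{2}(\mu)$ the purely imaginary roots of \eqref{eq:llm-rot-inv-char-poly}; positive definiteness of the Hessian (noted just above, from H1, H2) forces $\omega_{1}>0$. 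One step of Birkhoff normalization in the transverse variables then presents $\mathbf{G}_{\mu}$ in the shape required by Theorem~\ref{thm:modified-chierchia-pusateri}, with $\mu$ the non-degenerate action and $(u,U),(\rho,p_{\rho})$ the degenerate small oscillations (re-scaled by $\varepsilon=\sqrt a$); the re-scaled first-order frequency data at the equilibrium is the pair $\mu\mapsto\bigl(\omega_{2}(\mu),\,\alpha(\mu)\bigr)$ with $\alpha=\omega_{1}/\varepsilon$. By that theorem it then suffices to produce a positive-measure set of values of the governing parameter for which this map is R-non-degenerate; for each such value the theorem returns a neighbourhood $O$ of the equilibrium carrying a positive-measure set of invariant tori.

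For part (1), fix $a,T>0$ with $\eta(\cdot;a,T)$ non-constant in $\mu$. The direction of $\bigl(\omega_{2}(\mu),\alpha(\mu)\bigr)$ is $\alpha/\omega_{2}=\eta(\mu)/\varepsilon$, so non-constancy of $\eta$ means the image of $\mu\mapsto(\omega_{2}(\mu),\alpha(\mu))$ is not contained in any line through the origin; hence the re-scaled frequency map is R-non-degenerate. It remains to check that R-non-degeneracy, together with the ellipticity and low-order non-resonance hypotheses of Theorem~\ref{thm:modified-chierchia-pusateri}, holds for $\mu$ outside a null set. Away from $\{\eta=1\}$ the frequencies and $\eta$ are real-analytic in $\mu$; the discriminant $(\omega_{1}^{2}+\omega_{2}^{2})^{2}-4\omega_{1}^{2}\omega_{2}^{2}$ is globally real-analytic and is not identically zero, because a non-constant $\eta$ cannot be identically $1$, so $\{\eta=1\}$ is discrete. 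On each connected component of the complement the failure of R-non-degeneracy is a nontrivial real-analytic equation in $\mu$, hence has discrete zero set, and the same holds for the finitely many further excluded resonances; the union over the countably many components is null. Taking $\mathfrak{M}$ to be the complement, and shrinking $O$ so the relevant non-degeneracy constants --- which depend continuously on $\mu$ --- stay bounded below on compacta, yields $\mathfrak{M},O,\mathfrak{T}_{\mu}$.

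For part (2), fix $T>0$ and $\mu\ne0$. If $\eta(\cdot;a,T)$ is non-constant in $\mu$ for this $a$ we are back in case (1), so suppose it is constant in $\mu$ and take $a$ as the parameter. From \eqref{eq:llm-rot-inv-char-poly} and the coefficient formulas \eqref{eq:llm-rot-inv-sympred-d2g-coeffs}, a short expansion gives $\omega_{1}^{2}+\omega_{2}^{2}=DE+AC=\tfrac{W}{\rs s_{o}^{2}}+O(a)$ and $\omega_{1}^{2}\omega_{2}^{2}=CD(AE-B^{2})=2a\,\tfrac{v'(\rs)\,(W-2v'(\rs))\,\Omega_{2}(s_{o})}{s_{o}^{4}}+O(a^{2})$, with $W=\rs v''(\rs)+3v'(\rs)$ and $W-2v'(\rs)=\rs v''(\rs)+v'(\rs)$; hence $\omega_{2}^{2}=\tfrac{W}{\rs s_{o}^{2}}+O(a)$ and $\omega_{1}^{2}=a\,\psi+O(a^{2})$ with $\psi=\tfrac{2\rs^{2}\,v'(\rs)\,(W-2v'(\rs))\,\Omega_{2}(s_{o})}{W^{2}}>0$ by H1, H2 (and $\Omega_{2}>0$). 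Thus $\eta^{2}(a)=\psi\,a+O(a^{2})$ is a non-constant real-analytic function of $a$ on $(0,\infty)$, the locus $\{\eta=1\}$ is discrete, and $\alpha(a)=\omega_{1}(a)/\sqrt a$ extends real-analytically to $a=0$ with $\alpha(0)=\sqrt\psi>0$; repeating the argument of the previous paragraph with $a$ in place of $\mu$ produces the full-measure $\mathfrak{A}$.

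I expect the main obstacle to be pinning down exactly which non-degeneracy Theorem~\ref{thm:modified-chierchia-pusateri} requires of this normal form --- in particular verifying that R-non-degeneracy of the first-order frequency map $\mu\mapsto(\omega_{2},\alpha)$ already suffices, rather than a condition that also involves the Birkhoff torsion in the small-oscillation directions --- and handling the $1{:}1$ resonance locus $\{\eta=1\}$, where both the Birkhoff normalization and the KAM iteration degenerate and one must still argue the exceptional set is null.
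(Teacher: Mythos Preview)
Your proposal routes through the properly degenerate KAM theorem (Theorem~\ref{thm:modified-chierchia-pusateri}), but the paper does \emph{not} use that theorem here; it uses Russmann's theorem (Theorem~\ref{thm:russmann0}) and the dichotomy recorded in Remark~\ref{rem:russmann0}, applied directly to the symplectically reduced two-degree-of-freedom Hamiltonian $\mathbf{G}_{\mu}$. This difference is not cosmetic: it is where your argument develops a genuine gap.

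Theorem~\ref{thm:modified-chierchia-pusateri} requires a Hamiltonian of the form $H_{0}(I)+\varepsilon H_{1}$ with a \emph{small} parameter $\varepsilon$. You take $\varepsilon=\sqrt{a}$, but in case~(1) the value of $a>0$ is \emph{fixed} and arbitrary, so there is no smallness available and the theorem does not apply. (Your own asymptotics show $\omega_{2}$ is $O(1)$ in $a$, so the transverse quadratic part does not sit inside an $\varepsilon H_{1}$.) Moreover, with $I=p_{\theta}$ (so $m=1$) and two transverse oscillations (so $l=2$), the re-scaled frequency map of Theorem~\ref{thm:modified-chierchia-pusateri} is three-dimensional, $(\mathrm{d}H_{0}/\mathrm{d}\mu,\alpha_{1},\alpha_{2})$, not the pair $(\omega_{2},\alpha)$ you write; the rotational frequency $\mathrm{d}G_{0}/\mathrm{d}\mu$ is not the same object as $\omega_{2}$.

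The paper's argument sidesteps all of this. For each fixed $a,T>0$ the reduced Hamiltonian $\mathbf{G}_{\mu}$ has an elliptic critical point with frequency vector $(\omega_{1},\omega_{2})$. Russmann's dichotomy says that if this vector is Diophantine, then either some Birkhoff polynomial is nontrivial modulo powers of $H_{2}$ (yielding KAM tori by the usual twist argument) or the system is real-analytically integrable (yielding tori trivially). Either branch gives the conclusion, so no Birkhoff torsion ever needs to be computed---which resolves exactly the worry you flag in your final paragraph. The full-measure claim then reduces to: $\eta=\omega_{1}/\omega_{2}$ is real-analytic off the discrete set $\{\eta=1\}$, and when non-constant its preimage of the full-measure set of Diophantine ratios has full measure. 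For case~(2) the paper simply asserts $a\mapsto\eta(\mu;a,T)$ is non-constant and reruns the same argument; your small-$a$ expansion $\eta^{2}(a)=\psi a+O(a^{2})$ is a perfectly good verification of that assertion and is the one place your write-up adds detail the paper omits.
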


To prove this lemma, one needs to recall a classic result of
Russmann. Recall that a vector $ω ∈ \R^n$ is Diophantine with constant
$γ > 0$ and exponent $τ > 0$ if
$$ \left| \ip{k}{ω} \right| ≥ γ |k|^{-τ}, \qquad \forall k ∈ \Z^n ∖ \set{0}.  $$
It is well-known that the set of Diophantine vectors is of full
measure in $\R^n$.

Russmann, in~\cite[p. 56]{MR213679}, proves that if the Hamiltonian
$H$ has a critical point with first Birkhoff invariant
$H_2 = \ip{ω}{I}$ where $ω$ is a Diophantine vector and the Birkhoff
normal form of $H$ is $B=\sum_{k=1}^{∞} a_k H_2^k$ (with $a_1=1$),
then this formal power series actually converges on a neighbourhood of
the critical point and there is a real-analytic symplectic map $φ$
defined on the same neighbourhood such that $H ⎄ φ = B$. In two
degrees of freedom, this implies the following.

\begin{theorem}[Russmann~\cite{MR213679}; Churchill, Pecelli,
  Sacolick and Rod~\cite{MR494256}]
  \label{thm:russmann0}

  $ $\newline
  \noindent
  Let $H(x_1,x_2,p_1,p_2) = H_2(I_1,I_2) + O(3)$ where
  $H_2 = ω_1 I_1 + ω_2 I_2$ and
  $I_i = ½ \left( x_i^2 + p_i^2 \right)$, be a real-analytic
  hamiltonian defined on a neighbourhood of $0 ∈ \R^4$. Assume that
  $ω=(ω_1,ω_2)$ is a Diophantine vector in $\R^2$. One of the two
  possibilities holds:
  \begin{enumerate}
  \item for some $k>1$, the Birkhoff normal form of $H$ of degree $k$
    is non-zero modulo polynomials of degree $k$ in $H_2$; or
  \item for all $k>1$, the Birkhoff normal form of $H$ of degree $k$
    is a polynomial in $H_2$.
  \end{enumerate}
  In the second case, there is a real-analytic symplectic map $φ$ defined
  on a neighbourhood of $0$ and real-analytic function $G$ such that
  $H ⎄ φ = G ⎄ H_2$.
\end{theorem}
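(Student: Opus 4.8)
The dichotomy between (1) and (2) is a tautology, so the only substance is the claim in the second case: if the formal Birkhoff normal form of $H$ is a power series $B(H_2)=H_2+a_2H_2^2+\cdots$ in $H_2$ alone, then $B$ and the normalizing transformation both converge near $0$. I would prove this by a standard normal-form construction followed by a small-divisor convergence estimate, the point being that the structural hypothesis (case (2)) is exactly what makes the second step succeed.

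First I would carry out the formal normalization in complex symplectic coordinates $z_j=x_j+\mathrm{i}p_j$, so that $H_2=\tfrac12\sum_j\omega_j|z_j|^2$ and the cohomological operator $\mathrm{ad}_{H_2}=\pb{H_2}{\cdot}$ acts diagonally on monomials $z^a\bar z^b$ with eigenvalue $\mathrm{i}\kp{\omega}{a-b}$. Because $\omega$ is Diophantine, on polynomials the kernel of $\mathrm{ad}_{H_2}$ is spanned by the ``resonant'' monomials $a=b$, i.e.\ by polynomials in the actions $I_j=\tfrac12|z_j|^2$, while on the non-resonant complement $\mathrm{ad}_{H_2}^{-1}$ exists with the divisor bound $\|\mathrm{ad}_{H_2}^{-1}g\|\le\gamma^{-1}(\deg g)^{\tau}\,\|g\|$. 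Writing $H=H_2+\sum_{j\ge3}H_j$ with $H_j$ homogeneous of degree $j$, I would construct homogeneous generating functions $\chi_j$ (normalized to have no resonant part) whose successive time-one Hamiltonian maps bring $H$, stage by stage, into the form $H_2+\sum_{j=3}^{N}N_j$ plus a remainder of degree exceeding $N$, with $N_j$ the resonant normal-form term of degree $j$; hypothesis (2) says exactly that each even-degree term is $N_{2k}=a_kH_2^k$ (the odd-degree terms vanishing automatically). At the formal level this fixes $\chi=\sum_j\chi_j$ and $B=H_2+\sum_k a_kH_2^k$ uniquely. Once convergence is in hand it is convenient to pass to $\widetilde H:=B^{-1}\circ H$, which is analytic with $\widetilde H=H_2+O(3)$ and Birkhoff normal form exactly $H_2$, thereby reducing the stated conclusion to the case $B=\mathrm{id}$ --- analytic linearizability of $\widetilde H$.

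The hard part is convergence, and it cannot be done with the divisor bound alone: for a generic real-analytic $H$ with Diophantine $\omega$ the Birkhoff transformation diverges, so any proof must use hypothesis (2) in an essential way. The mechanism I would use is cancellation. Organizing $\chi$ as a Lindstedt-type tree series, the contributions in which small divisors $\kp{\omega}{k}^{-1}$ accumulate are exactly the ``resonant'' sub-diagrams, and the demand that the normal form be a function of $H_2$ alone forces these to occur in pairs with opposite sign --- the analogue, for Birkhoff normal forms, of Eliasson's compensations in the Lindstedt series for KAM tori. Granting the cancellations, the surviving sum is controlled by the usual Diophantine book-keeping (geometric factors against a polynomial loss $\gamma^{-1}(\deg)^{\tau}$ per vertex) and converges on a small polydisc; convergence of $B$ then follows by restricting the analytic function $H\circ\varphi$ to the ray $I_1=I_2$. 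An alternative, closer to the route of Churchill--Pecelli--Sacolick--Rod, is to fix a small energy $c$, restrict to $\{H=c\}$, pass to the area-preserving Poincar\'e return map on a transversal to a Lyapunov (normal-mode) periodic orbit, invoke the corresponding linearization statement for analytic area-preserving maps with Diophantine rotation number and vanishing higher Birkhoff invariants, and then reassemble a single flow conjugacy from the $c$-family of map conjugacies.

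I expect the main obstacle to be precisely this cancellation/book-keeping step: pinning down the combinatorial structure that makes ``normal form $=B(H_2)$'' the sharp hypothesis defeating small-divisor accumulation, and --- in a superconvergent Newton formulation of the same idea --- checking that this structure, together with the fixed Diophantine $\omega$, survives each iteration while the analyticity domains shrink only geometrically. Once $\chi$ is known to converge, the final assertion --- a real-analytic symplectic $\varphi$ and real-analytic $G$ with $H\circ\varphi=G\circ H_2$ --- is immediate, with $G=B$.
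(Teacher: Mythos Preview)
The paper does not prove this theorem: it is quoted as a classical result, with the convergence statement attributed to R\"ussmann (the paragraph just above the theorem summarizes his result) and the two-degree-of-freedom formulation to Churchill--Pecelli--Sacolick--Rod. So there is no ``paper's own proof'' to compare with; your sketch is being measured against the cited originals.

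Your outline is broadly sound --- you correctly isolate case~(2) as the only content, set up the cohomological equation with $\mathrm{ad}_{H_2}$, note that generic Birkhoff transformations diverge so the hypothesis must be used essentially, and list both a direct-series route and the Poincar\'e-section route of Churchill et~al. Two comments are worth making. First, the mechanism you foreground --- Eliasson-type tree cancellations forcing resonant sub-diagrams to pair off --- is not R\"ussmann's argument, and I am not aware of a written proof of this particular theorem along those lines; you would have to develop that combinatorics from scratch, which is a nontrivial project rather than a ``standard'' step. Second, and more to the point, R\"ussmann's actual reason for convergence is cleaner than cancellations and you touch on it only in passing: if the Birkhoff normal form is a power series in $H_2=\omega_1 I_1+\omega_2 I_2$ alone, then at every stage of the Newton iteration the integrable part has frequency vector proportional to the \emph{fixed} $\omega$, so the small divisors encountered are always $\langle k,\omega\rangle$ and the single Diophantine estimate $|\langle k,\omega\rangle|\ge\gamma|k|^{-\tau}$ controls them uniformly --- no drift of frequencies, no compounding of divisors. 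That structural fact is what drives the superconvergent scheme; your parenthetical ``together with the fixed Diophantine $\omega$, survives each iteration'' is the crux and deserves to be the centrepiece rather than an afterthought.
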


\begin{remark}
  \label{rem:russmann0}
  It follows that if a two-degree-of-freedom hamiltonian $H$ has a
  critical point with $H_2 = \ip{ω}{I}$ and $ω$ Diophantine, then there
  is a neighbourhood of the critical point that contains a positive
  measure set of invariant tori. In case (1), this follows from the fact
  that there is a $k>0$ such that the Birkhoff polynomial $B_k$ of
  degree $k$ has a Hessian that is non-degenerate for some $(I_1,I_2)$
  near $(0,0)$. In case (2), it is clear since $H$ is integrable.
\end{remark}

\begin{proof}[Proof of Lemma~\ref{lem:llm-rot-inv-r-non-deg}]
  In case (1), $η$ is a non-constant function of $μ$, so the frequency
  map $μ → (ω_1,ω_2)$ is R-non-degenerate and real-analytic (except
  for at most countably many values of $μ$). It follows that the
  pre-image of the full-measure set of Diophantine vectors is a set of
  full measure.

  In case (2), one can see that the function $a → η(μ;a,T)$ is
  non-constant and so the argument of the previous paragraph applies.

  Remark~\ref{rem:russmann0} therefore implies the present Lemma.
\end{proof}

\begin{remark}
  \label{rem:om2}
  If $Ω_2$ is constant, then for fixed $a, T$, the coefficient
  $DE+AC \propto s_o^{-2}$ while $CD(AE-B^2) \propto s_o^{-4}$, so the
  roots of the characteristic
  polynomial~\eqref{eq:llm-rot-inv-char-poly} lie on a line through
  $0$ when $p_{θ}=μ$ varies. Thus, $η$ is constant as a function of
  $μ$. In addition, it is easy to see that when $Ω_2$ is non-constant,
  then $η$ is, too. On the other hand, case (2) of
  lemma~\ref{lem:llm-rot-inv-r-non-deg} holds independent of whether
  $Ω_2$ is constant or not. Figure~\ref{fig:freq-ratio} graphs the
  function $T → η(1; a,T)$ for the Nos{é}-thermostated planar system
  with potential energy $v(r)=r^2+r^4$ and selected values of $a$.
\end{remark}

\begin{figure}[p]
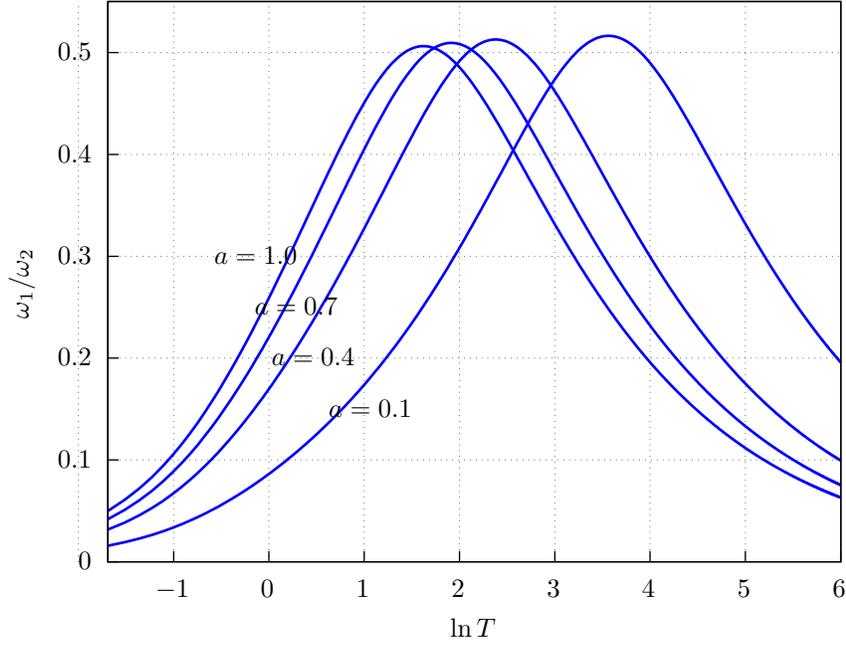

  \centering
  \caption{The frequency ratio $η=ω_1/ω_2$ v. the natural logarithm of
    the temperature $T$ for selected values of $a$. The potential
    energy is $v(r)=r^2+r^4$ and the thermostat is Nos{é}'s. The value
    of $η$ at $a=0$ is $T=0$ in all cases.}
  \label{fig:freq-ratio}
  \ltxfigure{nhnd/nhnd-freq-ratio-llm-mac.tex}{12cm}{!}
\end{figure}

\begin{figure}[p]
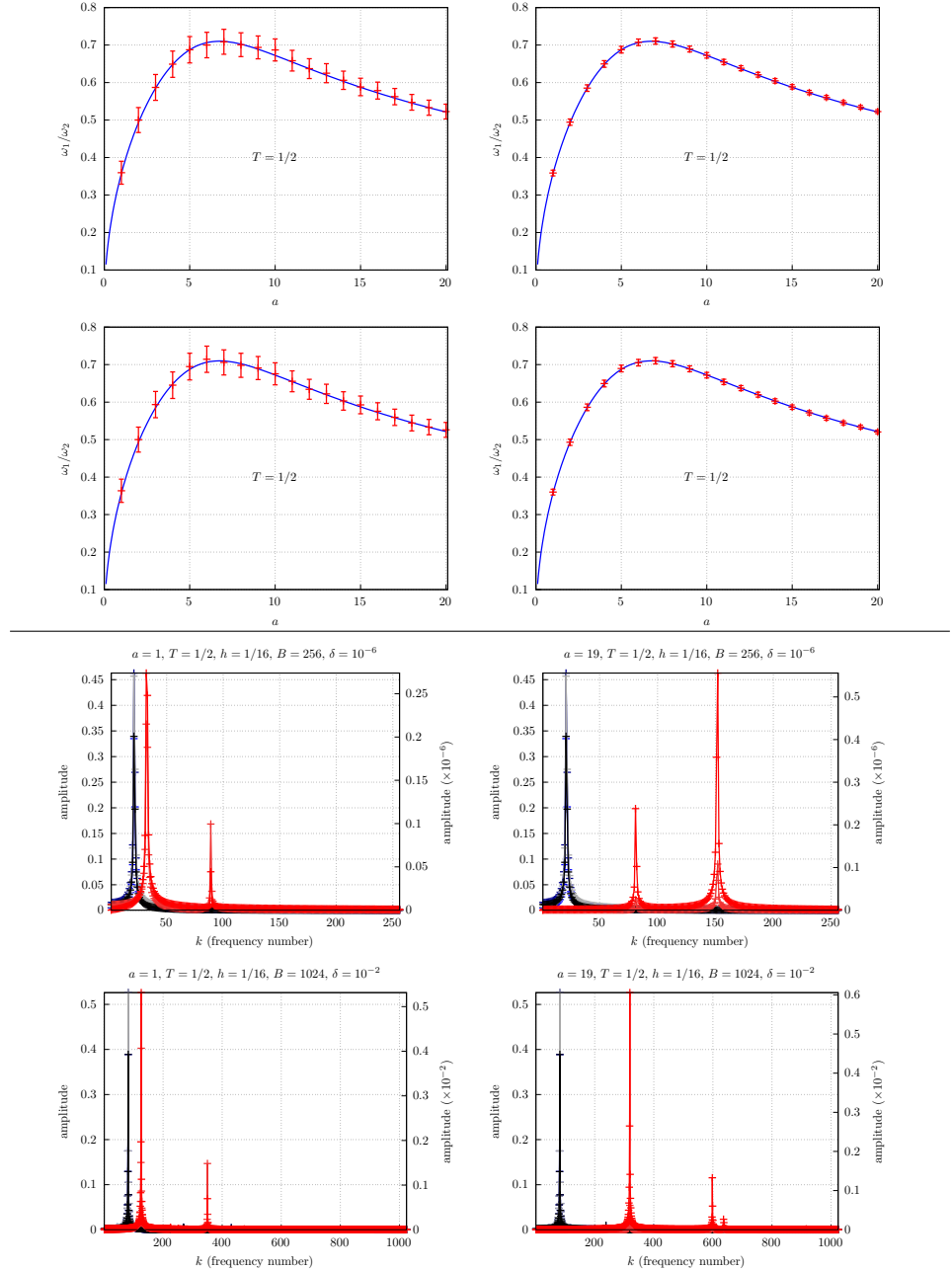

  \centering
  \caption{{\textbf{Nos{é} Thermostat}}. (TL) The frequency ratio
      function $η=ω_1/ω_2$ for the Lennard-Jones 12-6 potential
      $v(r) = r^{-12}-r^{-6}$ as a function of the parameter $a$ for
      the Nos{é} thermostat at temperature $T=1/2$ (in blue) and the
      ``empirical'' frequency ratio as measured by a
      numerically-integrated orbit segment on the interval $[0,B]$,
      with $B=2^8$, stepsize $=2^{-4}$, and a displacement
      $δ = 10^{-6}$ from the relative equilibrium. The error bars are
      (conservative) estimates of the uncertainty in the estimated
      ratios whose height is $(k_1+1)/(k_2-1)-(k_1-1)/(k_2+1)$ when
      $ω_i = 2 π k_i /N$ and $N$ is the sample size. (TR) As in (TL)
      with $B=2^{10}$; the decrease in uncertainty is notable. (ML) As
      in (TL) with $δ = 10^{-2}$. (MR) As in (TR) with $δ =
      10^{-2}$. (BL+R) Selected amplitude vs. frequency number of the
      Fourier transform of $x$ and $p_x$ (dark grey and black, left
      axis); $y$ and $p_y$ (dark blue and blue, left axis) and $s$ and
      $S$ (light red and red, right axis). The mean value of $s$ is
      ignored to highlight the oscillatory modes.}
  \label{fig:nhnd-freq-resp-256}
  \ltxfigure{nhnd/nhnd-freq-resp-256-mac.tex}{0.45\textwidth}{!}
  \ltxfigure{nhnd/nhnd-freq-resp-1024-mac.tex}{0.45\textwidth}{!}
  \ltxfigure{nhnd/nhnd-freq-resp-256-eps0_01-mac.tex}{0.45\textwidth}{!}
  \ltxfigure{nhnd/nhnd-freq-resp-1024-eps0_01-mac.tex}{0.45\textwidth}{!}\\\hrule$ $\\
  \ltxfigure{nhnd/nhnd-freq-ratios-a1-mac.tex}{0.45\textwidth}{!}
  \ltxfigure{nhnd/nhnd-freq-ratios-a19-mac.tex}{0.45\textwidth}{!}
  \ltxfigure{nhnd/nhnd-freq-ratios-1024-a1-mac.tex}{0.45\textwidth}{!}
  \ltxfigure{nhnd/nhnd-freq-ratios-1024-a19-mac.tex}{0.45\textwidth}{!}
\end{figure}

\begin{figure}[p]
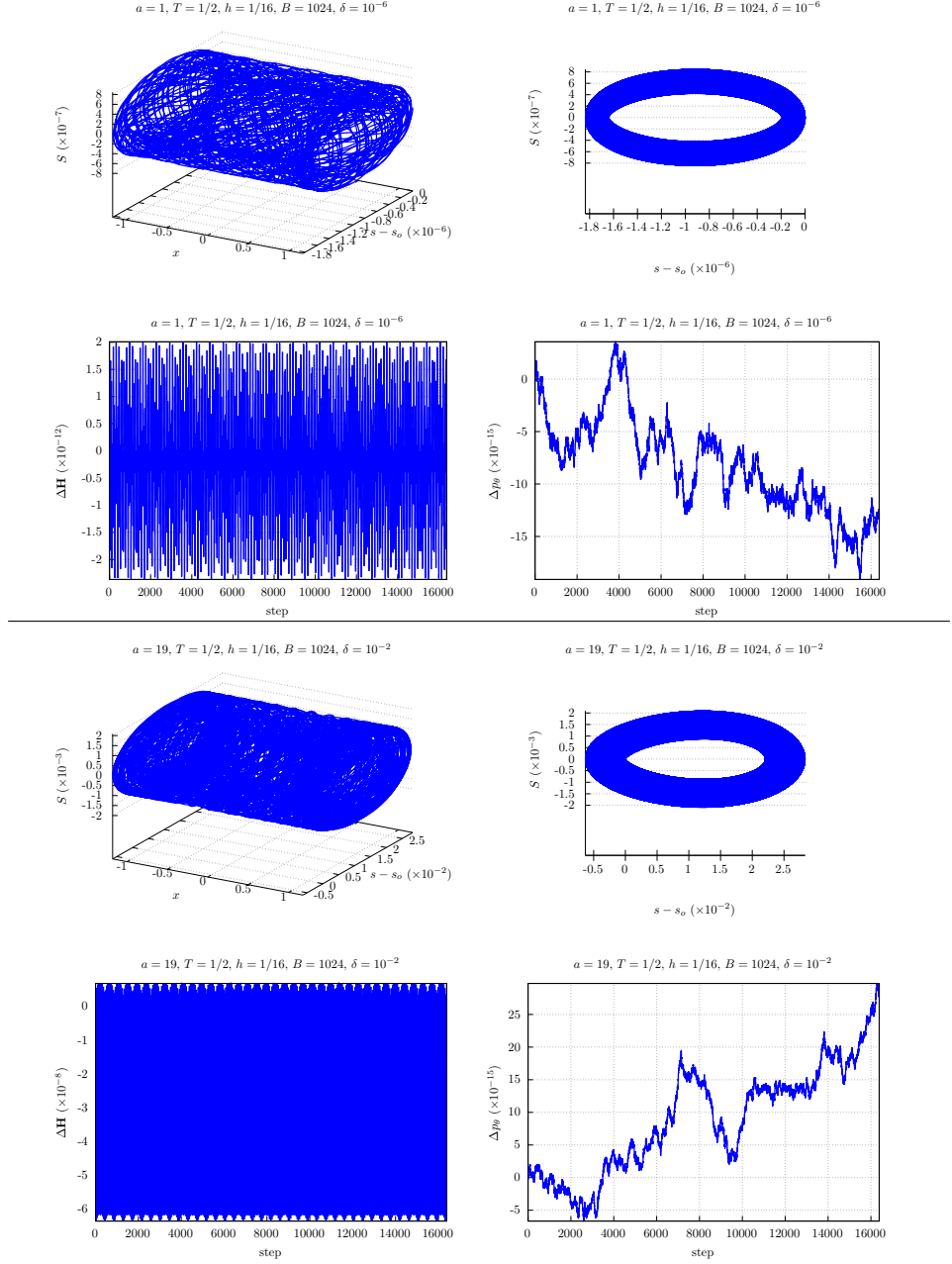

  \caption{{\textbf{Nos{é} Thermostat}}. Top panel: (TL) Projection of
    an orbit segment onto the $s\-S\-x$ $3$-space seen from an oblique
    angle; (TR) The same orbit segment projected onto the $s\-S$
    plane. (BL) Change in energy from its initial value. (BR) Change
    in angular momentum from its initial value.\\ Bottom panel: Same
    as for top panel with different value of $a$ and $δ$.}
  \label{fig:nhnd-3d}
  \ltxfigure{nhnd/nhnd-3d-60-30-a1_0-T0_5-logdelta-6-stepsize0_0625-time1024_0-mac.tex}{0.45\textwidth}{!}             
  \ltxfigure{nhnd/nhnd-3d-90-90-a1_0-T0_5-logdelta-6-stepsize0_0625-time1024_0-mac.tex}{0.45\textwidth}{!}             
  \ltxfigure{nhnd/nhnd-3d-delta-H-a1_0-T0_5-logdelta-6-stepsize0_0625-time1024_0-mac.tex}{0.45\textwidth}{!}             
  \ltxfigure{nhnd/nhnd-3d-delta-ptheta-a1_0-T0_5-logdelta-6-stepsize0_0625-time1024_0-mac.tex}{0.45\textwidth}{!}\\\hrule$ $\\
  \ltxfigure{nhnd/nhnd-3d-60-30-a19_0-T0_5-logdelta-2-stepsize0_0625-time1024_0-mac.tex}{0.45\textwidth}{!}             
  \ltxfigure{nhnd/nhnd-3d-90-90-a19_0-T0_5-logdelta-2-stepsize0_0625-time1024_0-mac.tex}{0.45\textwidth}{!}             
  \ltxfigure{nhnd/nhnd-3d-delta-H-a19_0-T0_5-logdelta-2-stepsize0_0625-time1024_0-mac.tex}{0.45\textwidth}{!}             
  \ltxfigure{nhnd/nhnd-3d-delta-ptheta-a19_0-T0_5-logdelta-2-stepsize0_0625-time1024_0-mac.tex}{0.45\textwidth}{!}
\end{figure}

\begin{figure}[p]
  \centering
  \caption{{\textbf{Logistic Thermostat}}. For a description, see figure~\ref{fig:nhnd-freq-resp-256}}
  \label{fig:nhnd-freq-resp-256-logistic}
  \ltxfigure{nhnd/nhnd-freq-resp-256-logistic-mac.tex}{0.45\textwidth}{!}
  \ltxfigure{nhnd/nhnd-freq-resp-1024-logistic-mac.tex}{0.45\textwidth}{!}
  \ltxfigure{nhnd/nhnd-freq-resp-256-eps0_01-logistic-mac.tex}{0.45\textwidth}{!}
  \ltxfigure{nhnd/nhnd-freq-resp-1024-eps0_01-logistic-mac.tex}{0.45\textwidth}{!}\\\hrule$ $\\
  \ltxfigure{nhnd/nhnd-freq-ratios-a1-logistic-mac.tex}{0.45\textwidth}{!}
  \ltxfigure{nhnd/nhnd-freq-ratios-a19-logistic-mac.tex}{0.45\textwidth}{!}
  \ltxfigure{nhnd/nhnd-freq-ratios-1024-a1-logistic-mac.tex}{0.45\textwidth}{!}
  \ltxfigure{nhnd/nhnd-freq-ratios-1024-a19-logistic-mac.tex}{0.45\textwidth}{!}
\end{figure}

\begin{figure}[p]
  \caption{\textbf{Logistic Thermostat}. For a description, see figure~\ref{fig:nhnd-3d}.}
  \label{fig:nhnd-3d-logistic}
  \ltxfigure{nhnd/nhnd-3d-60-30-a1_0-T0_5-logdelta-6-stepsize0_0625-time1024_0-logistic-mac.tex}{0.45\textwidth}{!}             
  \ltxfigure{nhnd/nhnd-3d-90-90-a1_0-T0_5-logdelta-6-stepsize0_0625-time1024_0-logistic-mac.tex}{0.45\textwidth}{!}             
  \ltxfigure{nhnd/nhnd-3d-delta-H-a1_0-T0_5-logdelta-6-stepsize0_0625-time1024_0-logistic-mac.tex}{0.45\textwidth}{!}             
  \ltxfigure{nhnd/nhnd-3d-delta-ptheta-a1_0-T0_5-logdelta-6-stepsize0_0625-time1024_0-logistic-mac.tex}{0.45\textwidth}{!}\\\hrule$ $\\
  \ltxfigure{nhnd/nhnd-3d-60-30-a19_0-T0_5-logdelta-2-stepsize0_0625-time1024_0-logistic-mac.tex}{0.45\textwidth}{!}             
  \ltxfigure{nhnd/nhnd-3d-90-90-a19_0-T0_5-logdelta-2-stepsize0_0625-time1024_0-logistic-mac.tex}{0.45\textwidth}{!}             
  \ltxfigure{nhnd/nhnd-3d-delta-H-a19_0-T0_5-logdelta-2-stepsize0_0625-time1024_0-logistic-mac.tex}{0.45\textwidth}{!}             
  \ltxfigure{nhnd/nhnd-3d-delta-ptheta-a19_0-T0_5-logdelta-2-stepsize0_0625-time1024_0-logistic-mac.tex}{0.45\textwidth}{!}
\end{figure}

\subsection{The case of surfaces of revolution}
\label{sec:surf-rev}

Let us study the more general class of surfaces with a rotational
symmetry. A standard construction of a surface of revolution, $M$, is
to fix a unit-speed ``profile'' curve $γ(ξ) = (r(ξ),0,z(ξ))$ in the
$x$--$z$ plane and to rotate that curve about the $z$-axis. A
rotationally-invariant mechanical hamiltonian $H : \cotangent M → \R$
has the form
\begin{equation}
  \label{eq:llm-surf-rot-inv-ham}
  H(ξ,p_{ξ},θ,p_{θ}) = ½ \left( p_{ξ}^2 + (p_{θ}/r(ξ))^2 \right) + w(ξ),
\end{equation}
where $θ$ is the angle of rotation and $w : M → \R$ is a
rotationally-invariant potential energy.

\begin{remark}
  \label{rem:llm-surf-rot}
  The simplest examples of profile curves and the associated surfaces
  of revolution are:
  \begin{equation}
    γ(ξ) = \left\{
      \begin{aligned}
        (c,0,ξ),               & \quad & ξ ∈ \R,      & \quad & \textrm{a cylinder of radius $c>0$;} \\
        (ξ,0,0),               & \quad & ξ > 0,       & \quad & \textrm{the $x$--$y$ plane;}         \\
        (C+\cos(ξ),0,\sin(ξ)), & \quad & ξ ∈ [0,2 π], & \quad & \textrm{a torus when $C>1$;}         \\
        (\sin(ξ),0,-\cos(ξ)),  & \quad & ξ ∈ [0,2 π], & \quad & \textrm{the unit sphere.}
      \end{aligned}
      \right.
  \end{equation}
\end{remark}

If $r'(ξ) ≠ 0$ for $ξ$ in an interval $K$, then there is the inverse
function $ξ(r)$ defined on the interval $J=r(K)$. One can equally use
$(r,θ)$ as a local coordinate system on $M$, in which case the
mechanical hamiltonian is transformed to:
\begin{equation}
  \label{eq:llm-surf-rot-inv-ham-r}
  H(r,p_{r},θ,p_{θ}) = ½ \left( \left( c(r) p_{r} \right)^2 + (p_{θ}/r)^2 \right) + v(r),
\end{equation}
where $v(r) = w(ξ(r))$ and $c(r) = 1/ξ'(r)$. The thermostated
hamiltonian $𝐇$ is
\begin{equation}
  \label{eq:llm-surf-rot-inv-therm}
  𝐇(r,p_{r},θ,p_{θ},s,S) = ½ \left( \left( c(r) p_{r} \right)^2 + (p_{θ}/r)^2 \right)/s^2 + v(r) + F(s,aS)/a + T \ln s,
\end{equation}
i.e. the planar case~\eqref{eq:llm-rot-inv-therm-r2} has the same form with $c ≡ 1$.

It follows that lemma~\ref{lem:llm-rot-inv-therm-cov} holds verbatim,
while lemma~\ref{lem:llm-rot-inv-d2g-red} holds with one change: in
the expression for the coefficient $A$, $1/(r s_o)^2$ becomes
$(c(r) / r s_o)^2$ (recall that $r=r_o$ in that lemma). Finally,
lemma~\ref{lem:llm-rot-inv-r-non-deg} holds for the frequency ratio
function $η$, in this case, too.

\subsection{Numerical calculations}
\label{sec:num-calc}

To illustrate Theorem~\ref{thm:llm-rot-inv} and
Corollary~\ref{cor:llm-rot-inv},
figures~\ref{fig:nhnd-freq-resp-256}--\ref{fig:nhnd-3d-logistic}
display a panel of data obtained by integrating the thermostated
planar mechanical system with a Lennard-Jones (12,6) potential at a
temperature of $T=1/2$ and varying values of $a$. The $4$-th order
Candy--Rozmus--Forest--Ruth algorithm is
utilized~\cite{CANDY1991230,FOREST1990105}. This technique is based on
splitting the hamiltonian $𝐇$ into $𝐇_1 + 𝐇_2$ where each $𝐇_i$ is
trivially integrable. Because the kinetic energy is euclidean, this is
accomplished by treating $s$ as a momentum variable and $S$ as a
configuration variable. Interestingly, the proof of the normal-form
Lemma used a similar trick~(see eq. \ref{eq:normal-form-gen-fun-ext}).

\section{Properly Degenerate KAM Theory}
\label{sec:pdgen-kam}

Arnol'd~\cite{MR0170705}, in his attempt to prove the stability of the
$n$-body problem, formulated an important extension of his work on the
stability of quasi-periodic motions~\cite{MR0163025}. In that work,
one considers a hamiltonian on $\T^m × D^m × \R^{2l}$ of the form
\begin{equation}
  \label{eq:h-arnold}
  H_{ε}(θ,I,x,y) = H_0(I) + ε H_1(θ,I,x,y;ε),
\end{equation}
where the perturbation $H_1$ itself is decomposed as
\begin{equation}
  \label{eq:h1-arnold}
  H_1(θ,I,x,y) = P_1(I;ε) + \sum_{|k| ≤ d} α_{k}(I;ε) J^k + O(|x,y|^{2d+1}).
\end{equation}
The coordinates $x_i,y_i$ are canonically conjugate,
$J_i = ½ \left( x_i^2 + y_i^2 \right)$ for $i=1,\ldots,l$ and
$J^k = J_1^{k_1} × \cdots × J_l^{k_l}$. The decomposition of $H_1$ is
obtained by averaging over the fast variables $θ$ to a sufficiently
high degree and then computing the Birkhoff polynomial of the
resulting function of $(x,y)$ (parameterized by $I$).

Arnol'd used the case where $d=3$, while Chierchia \& Pinzari obtain
Arnol'd's results with only $d=2$~\cite{MR2684064,MR2836051}. On the
other hand, Chierchia \& Pusateri prove the following theorem for
$d=1$ (see~\cite{MR2104595} for the $C^{∞}$ case):

\begin{theorem}
  \label{thm:modified-chierchia-pusateri}
  Assume that the real-analytic hamiltonian $H_{ε}$ as in
  \eqref{eq:h-arnold} and \eqref{eq:h1-arnold}, which depends $C^1$ on
  $ε$, has a re-scaled frequency map at $ε=0$,
  $$ Ω(I) = (\D{H}_0(I), α_1(I), \ldots, α_l(I)), $$
  that is R-non-degenerate. Then, for all $ε$ sufficiently small,
  there exists a positive-measure set of phase space that belongs to
  $H_{ε}$-invariant Lagrangian tori. These tori are $O(ε)$-close to
  the Lagrangian tori $\set{I=const., J=O(ε)}$. The flow on each such
  torus is quasi-periodic with Diophantine frequencies.
\end{theorem}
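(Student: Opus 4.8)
The plan is to treat this as a properly degenerate KAM statement and to proceed by Arnol'd's blow-up of the elliptic directions, followed by the weak-non-degeneracy (Russmann / Chierchia \& Pusateri) KAM machinery, with R-non-degeneracy of $Ω$ supplying the small-divisor control.

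First I would blow up the elliptic variables, setting $x=\sqrt ε\,\bar x$, $y=\sqrt ε\,\bar y$ and rescaling time (and the Hamiltonian) by the appropriate power of $ε$. Since the elliptic part $\kp{α(I;ε)}{J}$ of \eqref{eq:h1-arnold} is quadratic in $(x,y)$, with $d=1$ the action $J_i=\tfrac12(x_i^2+y_i^2)$ becomes $ε\,\bar J_i$, $\bar J_i=\tfrac12(\bar x_i^2+\bar y_i^2)$, while the remainder in \eqref{eq:h1-arnold} --- which carries all the $θ$-dependence --- acquires a positive power of $ε$. After dividing through, the system is governed by a Hamiltonian of the shape
\[
  K_ε(θ,I,\bar x,\bar y)\;=\;\tfrac1ε H_0(I)\;+\;P_1(I;ε)\;+\;\kp{α(I;ε)}{\bar J}\;+\;\sqrt ε\,\mathcal R(θ,I,\bar x,\bar y;ε),
\]
with $P_1$, $α$ and $\mathcal R$ real-analytic on a fixed complex neighbourhood and $C^1$ in $ε$. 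Thus $K_ε$ is an $O(\sqrt ε)$ perturbation of the integrable normal form $N_ε=\tfrac1ε H_0(I)+P_1(I;ε)+\kp{α(I;ε)}{\bar J}$, whose invariant $(m+l)$-tori $\set{I=\mathrm{const.},\ \bar J=\mathrm{const.}}$ carry the frequency vector $ω_ε(I,\bar J)=\big(\tfrac1ε\D H_0(I)+\D P_1(I;ε)+\sum_j\bar J_j\,\D α_j(I;ε),\ α_1(I;ε),\dots,α_l(I;ε)\big)$.

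The key point is that the frequency map of $N_ε$ is again R-non-degenerate. At leading order it is $I\mapsto(\tfrac1ε\D H_0(I),α(I))$, which differs from $Ω=(\D H_0,α_1,\dots,α_l)$ only by the invertible linear map rescaling the first block by $\tfrac1ε$; since R-non-degeneracy (Definition~\ref{def:r-non-deg}) is the purely geometric condition that the image span the whole target space, it is preserved by invertible linear maps of that space, hence inherited by $ω_ε$ for every $ε\ne0$, and the lower-order ($ε$- and $\bar J$-dependent) corrections do not destroy it. By the characterisation recalled after Definition~\ref{def:r-non-deg}, this says precisely that no nonzero $(k,\ell)\in\Z^m\times\Z^l$ annihilates $I\mapsto\kp{k}{\tfrac1ε\D H_0(I)}+\kp{\ell}{α(I)}$ identically; each such real-analytic function then vanishes on a null set only, and a quantitative Russmann estimate turns this into simultaneous small-divisor lower bounds $|\kp{k}{\tfrac1ε\D H_0(I)}+\kp{\ell}{α(I)}|\ge γ_0/|(k,\ell)|^{τ}$ valid off a set of measure $O(γ_0)$ in action space.

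With this Diophantine set in hand I would run a KAM iteration for $N_ε$ with perturbation $\sqrt ε\mathcal R$. The main obstacle is that the problem is \emph{properly degenerate}: the fast angles $θ$ turn at rate $\sim 1/ε$, the elliptic amplitudes $\bar J$ drift only at second order, and the remainder $\sqrt ε\mathcal R$ is only a positive power of $ε$ smaller than the elliptic block itself, so no off-the-shelf twist theorem applies; one must instead carry out the scheme in two interleaved scales --- a fast averaging that removes the $θ$-dependence of $\sqrt ε\mathcal R$ to arbitrarily high polynomial order in $(\bar x,\bar y)$, alternated with the ordinary Kolmogorov step tuned to the slow frequencies $α$ --- controlling the small divisors up to a growing ultraviolet cut-off and arranging that $\sqrt ε$ beats the analyticity loss incurred at each step, which is what forces $ε$ to be small. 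This is exactly the content of the theorems of Arnol'd, of Chierchia \& Pinzari, and of Chierchia \& Pusateri, and I expect the delicate two-scale bookkeeping to be the principal difficulty. Granting its convergence, the surviving tori of $K_ε$ form a set whose complement has measure $O(γ_0)+O(\sqrt ε)\to 0$ (with $γ_0\to0$ slowly with $ε$); each is real-analytic and $O(\sqrt ε)$-close to a torus of $N_ε$, with quasi-periodic flow of Diophantine frequency $ω_ε$. Undoing the blow-up contracts the elliptic amplitudes by $\sqrt ε$, so in the original coordinates these are $H_ε$-invariant Lagrangian $(m+l)$-tori $O(ε)$-close to $\set{I=\mathrm{const.},\ J=O(ε)}$; since the admissible set of actions has positive measure and each fibre keeps positive $(\bar x,\bar y)$-measure, their union has positive measure in phase space, and in particular $H_ε$ is not ergodic. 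The same conclusions hold in the $C^{∞}$ category by the work of Herman-F{é}joz~\cite{MR2104595}.
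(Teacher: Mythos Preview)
The paper does not prove this theorem at all: it is quoted from the literature (Chierchia \& Pusateri~\cite{MR2505319}, with the $C^{\infty}$ version due to Herman--F{\'e}joz~\cite{MR2104595}) and used as a black box to deduce Theorem~\ref{thm:main-thm-1}. The only commentary the paper offers is the paragraph following the statement, which explains that the version needed here is not quite the one stated as \cite[Theorem~4]{MR2505319} but rather the variant actually applied on \cite[p.~870]{MR2505319}, which allows the decomposition to depend continuously on $\epsilon$.

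Your sketch is a reasonable outline of how the cited proofs in fact proceed---Arnol'd's $\sqrt{\epsilon}$ blow-up of the elliptic variables, the observation that R-non-degeneracy of $\Omega$ is inherited by the rescaled frequency map, and a two-scale KAM scheme under R\"ussmann's weak non-degeneracy hypothesis---so it is consistent with the literature you would be reproducing. But since the paper itself supplies no proof, there is nothing here to compare your argument against; in the context of this paper the correct ``proof'' is simply a citation.
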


Note that \cite[Theorem 4]{MR2505319} as stated is not the theorem
used to prove the existence of real-analytic KAM tori for the spatial
$n$-body system in that paper--see \cite[p. 870]{MR2505319}. Instead,
the authors use theorem~\ref{thm:modified-chierchia-pusateri} which
allows for the hamiltonian and its decomposition to depend in
non-trivial ways on $ε$--but it must be at least continuous in $ε$. It
is this theorem that is needed to prove Theorem~\ref{thm:main-thm-1}
of the present paper.

\section{Conclusions}
\label{sec:conclusions}

This paper has demonstrated that, in a weakly-coupled regime, the
generalized, variable-mass thermostats of order $2$--including the
Nos{é}, logistic and Winkler thermostats--cannot force most integrable
systems to sample from the Gibbs-Boltzmann distribution. It has also
shown that for some integrable systems (rotationally-invariant
mechanical systems on surfaces), these same thermostats {\em never}
force the system to be ergodic (at least for a full-measure set of
coupling parameters/thermostat masses).

A number of questions remain, though. Here are a select few.

\subsubsection*{1. Effective bounds for $Q_o$:}
Theorem~\ref{thm:main-thm-1} proves the existence of some positive
lower bound $Q_o$ for the thermostat mass (equivalently, a positive
upper bound for the coupling coefficient $a$) beyond which there are
positive-measure sets of KAM tori for the thermostated system. The
theoretical values for the bounds are generally incredibly small--see,
for example, the discussion in \cite[Section 4.4]{MR3467671}. It is
desirable to have a better understanding of this bound, if only for
some particular thermostats and hamiltonians.

\subsubsection*{2. The Thermostated Harmonic Oscillator:}
Theorem~\ref{thm:main-thm-1} does not apply to the 1-d harmonic
oscillator coupled to a generalized, variable-mass thermostat of order
$2$ (remark~\ref{rem:normal-form-near-T-example-contd}). Of course,
Legoll, Luskin \& Moeckel prove the existence of invariant tori in a
neighbourhood of the thermostatic equilibrium set, for all $a$
sufficiently small, by different means~\cite{MR2299758}. On the other
hand, there is an abundance of numerical evidence that suggest KAM
tori persist for $a \cong 1$. Is it true that the ``perturbation''
term $𝐏_{ε}$ in \eqref{eq:normal-form-near-T-h} contains terms that
stabilize the system, even for $ε \cong 1$?

\subsubsection*{3. Extensions of Theorem~\ref{thm:main-thm-1}:}
There are several directions to extend the theorem. Beyond the
previous point, it is desirable to have a generally effective means to
determine if the theorem applies to a particular thermostated
hamiltonian. In addition, order-$2n$ single thermostats appear in the
literature, so it is desirable to extend the theorem to encompass such
thermostats~\cite{TS2016,PhysRevE.75.040102}. A further direction to
extend the theorem is to reversible thermostats that do not
necessarily have a hamiltonian reformulation. The thermostat of
Kusnesov, Bulgac \& Bauer is one such example, while the generalized
Nos{é}-Hoover is a
second~\cite{MR1079786,MR1150101,SprottJulienClinton2014Hcat}.

\section*{Acknowledgments}
\label{sec:ack}

Computations in this paper have been done with Maxima
CAS~\cite{maximaCAS}.

This research has been partially supported by the Natural Science and
Engineering Research Council of Canada grant 320 852.

\bibliographystyle{siam}
\bibliography{nhnd-references}
\end{document}
